\newtheorem{theorem}{Theorem}[section]
\newtheorem{utv*}{Proposition}
\newtheorem{hyp*}{Conjecture}
\newtheorem{lemma}[theorem]{Lemma}
\newtheorem{defin}{Definition}
\newtheorem{zamech}{Remark}
\newtheorem*{th*}{Theorem}
\newcommand{\av}[2]{\langle #1\rangle_{_{\scriptstyle #2}}}
\newcommand{\ave}[1]{\langle #1\rangle}
\def\sli{\sum\limits}
\def\ili{\int\limits}
\def\a{\alpha}
\def\R{\mathbb{R}}
\def\ep{\varepsilon}
\def\E{\mathbb{E}}
\def\cD{\mathcal{D}}
\newcommand{\al}{\alpha}
\newcommand{\diam}{\operatorname{diam}}
\newcommand{\cz}{Calder\'{o}n--Zygmund\ }
\newcommand{\QQ}{[w]_{A_2}}
\newcommand{\wt}{\widetilde}
\newcommand{\La}{\langle}
\newcommand{\Ra}{\rangle}
\newcommand{\cP}{\mathcal{P}}
\newcommand{\cE}{\mathcal{E}}
\def\cyr{\fontencoding{OT2}\fontfamily{wncyr}\selectfont}
\DeclareTextFontCommand{\textcyr}{\cyr}
\newcommand{\sha}[0]{\ensuremath{\mathbb{S}
}}
\newcounter{vremennyj}
\begin{document}

\title{The proof of $A_2$ conjecture in a geometrically doubling metric space}
\author{Fedor Nazarov}
\address{Department of Mathematics, University of Wisconsin-Madison and Kent Sate University}
\email{nazarov@math.wisc.edu}
\thanks{Work of F.~Nazarov is supported  by the NSF grant  DMS-0758552}

\author{Alexander Reznikov}
\address{Department of Mathematics,  Michigan State University, East
Lansing, MI 48824, USA}
\email{reznikov@ymail.com}
\author{Alexander Volberg}
\thanks{Work of A.~Volberg is supported  by the NSF under the grant  DMS-0758552.
}
\address{Department of Mathematics, Michigan State University, East
Lansing, MI 48824, USA}
\email{volberg@math.msu.edu}
\urladdr{http://sashavolberg.wordpress.com}


\makeatletter
\@namedef{subjclassname@2010}{
  \textup{2010} Mathematics Subject Classification}
\makeatother

\subjclass[2010]{42B20, 42B35, 47A30}



%
%

\keywords{\cz operators, $A_2$ weights, Carleson embedding theorem, Bellman function, stopping time,
   geometrically doubling metric space, homogeneous metric space}

   \begin{abstract}
We give a proof of the $A_2$ conjecture in geometrically doubling metric spaces (GDMS), i.e. a metric space where one can  fit not more than a fixed amount of disjoint balls of radius $r$ in a ball of radius $2r$. Our proof consists of three main parts: a construction of a random ``dyadic'' lattice in a metric space; a clever averaging trick from ~\cite{Hyt}, which decomposes a ``hard'' part of a Calder\'on-Zygmund operator into dyadic shifts (adjusted to metric setting); and the estimates for these dyadic shifts, made in ~\cite{NV} and later in ~\cite{Tr}.
\end{abstract}

\date{}
\maketitle

\section{Introduction}
\label{Intro}

Recall that in \cite{PTV1} it was proved that
\begin{theorem}
\label{weak}
If $T$ is an arbitrary operator with a Calder\'on--Zygmund kernel, then
\begin{align*}
\|T\|_{L^2(w)\rightarrow L^{2,\infty}(w)}+ & \|T'\|_{L^2(w^{-1})\rightarrow L^{2,\infty}(w^{-1})}
 \le 2\|T\|_{L^2(w)\rightarrow L^2(w)}
\\
& \le C\,([w]_{A_2}+ \|T\|_{L^2(w)\rightarrow L^{2,\infty}(w)}+ \|T'\|_{L^2(w^{-1})\rightarrow L^{2,\infty}(w^{-1})}).
\end{align*}
\end{theorem}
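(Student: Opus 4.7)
The inequality on the first line is routine. The weak $L^{2,\infty}(w)$ norm is always dominated by the strong $L^2(w)$ norm, and the self-duality $(L^2(w\,d\mu))^{*}=L^2(w^{-1}d\mu)$ yields the identity $\|T\|_{L^2(w)\to L^2(w)}=\|T'\|_{L^2(w^{-1})\to L^2(w^{-1})}$. Hence each of the two weak-type quantities on the left is at most $\|T\|_{L^2(w)\to L^2(w)}$, and summing gives the factor of $2$.

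For the second inequality the plan is to reduce to the bilinear form. By duality,
\[
\|T\|_{L^2(w)\to L^2(w)}=\sup\bigl\{\ave{Tf,g}\colon \|f\|_{L^2(w)}=\|g\|_{L^2(w^{-1})}=1\bigr\},
\]
so fix $f$ and $g$ of unit norm and construct a simultaneous Calder\'on--Zygmund / corona decomposition: stopping cubes for $f$ are chosen from averages of $|f|$ against $w\,d\mu$, and stopping cubes for $g$ from averages of $|g|$ against $w^{-1}d\mu$. Both resulting families are sparse, so each supports a Carleson embedding theorem with constants depending only on the geometric doubling constant of the ambient metric space.

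The pairing $\ave{Tf,g}$ then splits into a \emph{diagonal} part (interactions between principal cubes of comparable size, one containing the other) and an \emph{off-diagonal} part. The diagonal part is estimated directly using the $A_2$ bound $\ave{w}_Q\ave{w^{-1}}_Q\le[w]_{A_2}$ combined with the Carleson embedding applied to both corona trees, giving the $[w]_{A_2}$ term on the right. The off-diagonal part is where the weak-type hypotheses enter: on each stopping cube $Q$ the local piece $T(f\1_Q)$ satisfies
\[
\bigl|\{|T(f\1_Q)|>\lambda\}\cap Q\bigr|_{w}\le\lambda^{-2}\,\|T\|_{L^2(w)\to L^{2,\infty}(w)}^{2}\,\|f\1_Q\|_{L^2(w)}^{2},
\]
and a layer-cake integration across the geometrically growing stopping thresholds extracts exactly one power of the weak-type norm. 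A symmetric argument with $g$ and the transposed kernel brings in $\|T'\|_{L^2(w^{-1})\to L^{2,\infty}(w^{-1})}$, while the standard Calder\'on--Zygmund kernel estimate handles the tail of $T(f\1_Q)$ outside $Q$ by summing a geometric series over the sparse family.

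The main obstacle is to organize the two corona decompositions so that the weak-type constants appear \emph{linearly} and without any logarithmic loss coming from the layer-cake step. This requires carefully aligning the stopping thresholds on the $f$- and $g$-sides, exploiting sparseness to convert the potential $\log(\ave{|f|}_{Q'}/\ave{|f|}_Q)$ factors into a convergent geometric sum, and invoking the kernel estimate to confine the heavy part of the analysis to the cube where the weak-type bound is sharp. Collecting the three contributions gives the bound $C([w]_{A_2}+\|T\|_{L^2(w)\to L^{2,\infty}(w)}+\|T'\|_{L^2(w^{-1})\to L^{2,\infty}(w^{-1})})$ asserted by the theorem.
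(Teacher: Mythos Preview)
This theorem is not proved in the present paper at all: it is merely \emph{recalled} in the introduction as a result of P\'erez--Treil--Volberg \cite{PTV1}, and the paper then moves on to its own business (the random dyadic lattices, the averaging trick, and the shift estimates). There is therefore no ``paper's own proof'' to compare your proposal against.

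That said, your plan is in the right spirit for the cited result --- the very title of \cite{PTV1} is ``On $A_2$ conjecture and corona decomposition of weights,'' and a two-sided corona/stopping-time decomposition of $f$ in $L^2(w)$ and $g$ in $L^2(w^{-1})$ is indeed the engine of that argument. Your treatment of the first (trivial) inequality is correct. For the second inequality, however, your description of how the weak-type norms enter is too loose to be convincing as it stands. The displayed level-set inequality for $T(f\1_Q)$ is fine, but ``layer-cake integration across the geometrically growing stopping thresholds extracts exactly one power of the weak-type norm'' hides the real work: one must pair $T(f\1_Q)$ with the appropriate corona pieces of $g$, and it is the near-constancy of $w$ (and of $w^{-1}$) on each corona piece that allows the \emph{weighted} weak-type bound to be converted into an effective \emph{unweighted} weak-$(2,2)$ bound on that piece, whence Kolmogorov's inequality (or an $L^p$ estimate with $p<2$ close to $2$) yields a genuine integral estimate. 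Without making that mechanism explicit, the step from the level-set bound to a bilinear estimate linear in the weak norms is a gap. If you intend this as a self-contained proof, you should spell out (i) why the weight is essentially constant on each corona piece, (ii) how this turns the weighted weak-type into a usable local strong-type with the correct constant, and (iii) how the off-diagonal CZ tail sums over the sparse family without reintroducing a factor of $[w]_{A_2}$.
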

By $T'$ we denote the adjoint operator. Here of course only the right inequality is interesting. And it is unexpected too. The weak and strong norm of any operator with a Calder\'on--Zygmund kernel turned out to be equivalent up to additive term $[w]_{A_2}$. From this we obtained in \cite{PTV1} the result which holds for any Calder\'on--Zygmund operator.
\begin{theorem}
\label{A2log}
$\|T\|_{L^2(w)\rightarrow L^2(w)} \le C\cdot [w]_{A_2}\log (1+[w]_{A_2})$.
\end{theorem}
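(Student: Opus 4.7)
My plan is to derive Theorem \ref{A2log} by bounding the two weak-type norms on the right of Theorem \ref{weak} by $C\,[w]_{A_2}\log(1+[w]_{A_2})$: plugging the resulting estimate into the right inequality of Theorem \ref{weak} absorbs the additive $[w]_{A_2}$ and yields exactly the claim. Since $T'$ is again an operator with a Calder\'on--Zygmund kernel and $[w^{-1}]_{A_2}=[w]_{A_2}$, by symmetry it suffices to treat the weak-type norm of $T$ on $L^2(w)$.

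For the weak-type bound, I would fix $f\in L^2(w)$ and $\lambda>0$ and perform a Calder\'on--Zygmund decomposition of $f$ at height $\lambda$ adapted to the measure $w^{-1}\,dx$. This produces a family of pairwise disjoint bad cubes $\{Q_j\}$ and a splitting $f=g+\sum_j b_j$, where each $b_j$ is supported on $Q_j$ with mean zero and $g$ is controlled (in weighted average) by $\lambda$. The $A_2$ inequality $\langle w\rangle_{Q_j}\langle w^{-1}\rangle_{Q_j}\le[w]_{A_2}$ then bounds the $w$-measure of $\bigcup 2Q_j$ linearly in $[w]_{A_2}$, and the contribution of $\sum_j b_j$ outside $\bigcup 2Q_j$ is estimated by the standard mean-zero / H\"older-kernel argument, producing a term linear in $[w]_{A_2}$.

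The good part $g$ must be treated without invoking $L^2(w)$-boundedness of $T$, since that is the very estimate we seek. Instead, I would dualize against a test function supported in the level set $\{|Tg|>\lambda\}$, apply a Whitney covering to that set, and split the resulting bilinear form into separated-scale and same-scale contributions. The separated-scale part is tamed by kernel decay and is linear in $[w]_{A_2}$, while the same-scale part is controlled via a truncated dyadic maximal function that at each scale contributes at most $C\,[w]_{A_2}$. Summing across all scales up to the one at which the truncation saturates introduces the logarithmic factor, because the number of effective scales is of order $\log(1+[w]_{A_2})$ by the sharp reverse H\"older property of $A_\infty$ weights.

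The main obstacle will be the careful choice of stopping rule and truncation parameter so that the $A_2$ characteristic enters only linearly at each scale rather than quadratically; otherwise one picks up an extra factor of $[w]_{A_2}$ or a higher power of the logarithm. This delicate bookkeeping --- balancing the $L^\infty$ bound on $g$, the $A_2$ two-weight testing inequality on each Whitney cube, and the Carleson embedding used to sum across scales --- is the technical heart of the argument. Once handled, it produces precisely the single logarithmic loss stated in Theorem \ref{A2log}.
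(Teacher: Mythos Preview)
The paper does not give its own proof of Theorem \ref{A2log}; it is quoted from \cite{PTV1}, and the only indication of the argument is the sentence ``From this we obtained in \cite{PTV1} \ldots'', where ``this'' is Theorem \ref{weak}. Your top-level plan --- feed a weak-type bound of order $[w]_{A_2}\log(1+[w]_{A_2})$ for $T$ and $T'$ into the right inequality of Theorem \ref{weak} --- is therefore exactly the route the paper points to, and your observations that $T'$ is again Calder\'on--Zygmund and that $[w^{-1}]_{A_2}=[w]_{A_2}$ are the correct reductions to a single weak-type estimate.

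Where the proposal has a genuine gap is the weak-type estimate itself, specifically the treatment of the good part $g$. The bad-part argument you describe is standard and fine. But for $g$ you propose to Whitney-cover the super-level set $\{|Tg|>\lambda\}$, dualize, and split by scales, with the logarithm emerging from counting ``effective scales''. As written this does not work: the set $\{|Tg|>\lambda\}$ has no usable structure without some a priori operator bound on $Tg$, which is precisely what you say you must avoid; a Whitney covering of an arbitrary open set gives you geometry but no analytic control of $Tg$ on the pieces. The ``truncated dyadic maximal function that at each scale contributes at most $C[w]_{A_2}$'' is never defined, and it is not clear what inequality is being invoked per scale or why the number of contributing scales is $O(\log(1+[w]_{A_2}))$ rather than, say, proportional to the number of dyadic generations in the decomposition of $f$. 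The sharp reverse H\"older inequality you cite controls the self-improvement exponent of $w$, not the number of Whitney scales of a level set of $Tg$.

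In \cite{PTV1} the logarithm does arise from a count of layers, but the layers come from a corona (stopping-time) decomposition of the \emph{weight}, not of the level set: one stops whenever the average of $w$ (or $w^{-1}$) has changed by a fixed factor, and the $A_2$ condition forces the number of corona generations over any cube to be $O(\log[w]_{A_2})$. The good function is then handled layer by layer using unweighted or testing-type bounds on each corona piece. Your intuition that the log is a scale count tied to $[w]_{A_2}$ is correct, but the mechanism you sketch (Whitney pieces of $\{|Tg|>\lambda\}$) is not the one that is known to close, and your proposal does not supply the missing replacement for the forbidden strong bound on $g$.
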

By $A_2$ conjecture people understand the strengthening of this claim, where the logarithmic term is deleted, in other words, a linear  (in weight's norm) estimate of  arbitrary weighted Calder\'on--Zygmund operator.
In \cite{HLRSVUT} the $A_2$ conjecture was proved for Calder\'on--Zygmund operators having more than $2d$ smoothness in $\mathbb{R}^d$. The $A_2$ conjecture was fully proved in a preprint of T. Hyt\"onen, see \cite{Hyt}. The proof is based on the main theorem in the paper  \cite{PTV1} of P\'erez--Treil--Volberg.  Both \cite{PTV1} and \cite {Hyt} are neither short nor easy.

 The direct proof of $A_2$ conjecture (without going through \cite{PTV1}) was given in \cite{HPTV}, and it was based on two ingredients: 1) a formula for decomposing an arbitrary Calder\'on--Zygmund operators into (generalized) dyadic shifts by the averaging trick, 2) on a polynomial in complexity and linear in weight estimate of the norm of a dyadic shift.

 The latter was quite complicated and was based on modification of the argument in Lacey--Petermichl-Reguera \cite{LPR}. The former was rooted in the works on non-homogeneous Harmonic Analysis, like e. g. \cite{NTV}-- \cite{NTVlost}, but with  a new twist, which appeared first in Hyt\"onen's \cite{Hyt} and was simplified in  Hyt\"onen--P\'erez--Treil--Volberg's \cite{HPTV}.

 The averaging trick was a development of the bootstrapping argument used by Nazarov--Treil--Volberg \cite{NTV}--\cite{NTVlost}, where they exploited the fact that the bad part of a function can be made arbitrarily small. Using the original Nazarov--Treil--Volberg averaging trick would add an extra factor depending on $[w]_{A_2}$ to the estimate, so a new idea was necessary. The new observation in \cite{Hyt} was that as soon as the probability of a ``bad'' cube is less than $1$, it is possible to completely ignore the bad cubes (at least in the situation where they cause troubles).

\subsection{Structure of the paper}
Here we give a proof of the $A_2$ conjecture in geometrically doubling metric spaces (GDMS), i.e. a metric space where one can  fit not more than a fixed amount of disjoint balls of radius $r$ in a ball of radius $2r$. 

The paper is organized as follows:

\begin{enumerate}
\item A construction of a probability space of random ``dyadic'' lattice in a metric space is given in Section \ref{randlatt};
\item Averaging trick of Hyt\"onen \cite{Hyt} (but we think we simplified it) is given in Section \ref{decomp}; 
\item A linear estimate of weighted dyadic shift on metric space from \cite{NV}, which uses Bellman function technique, is given in Sections \ref{para} and \ref{shiftest}.
 For another proof of the linear estimate for weighted dyadic shifts, which can be easily adjusted to the metric case, we refer to ~\cite{Tr}.
\end{enumerate}

Our main result is the following.
\begin{theorem}[$A_2$ theorem for a geometrically doubling metric space]
Let $X$ be a geometrically doubling metric space, $\mu$ and $T$ as above, $w\in A_{2, \mu}$.
In addition we assume that $\mu$ is a doubling measure. Then
\begin{equation}\label{mainest}
\|T\|_{L^2(wd\mu)\to L^2(wd\mu)} \leqslant C(T)[w]_{2, \mu}.
\end{equation}
\end{theorem}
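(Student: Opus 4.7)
The plan is to follow the three-step strategy outlined in Section~1.1, each step of which is the content of one of the sections to come. First, I would construct a probability space of ``dyadic'' lattices $\mathcal{D}^\omega$ on $X$. Following Christ's construction but with added randomness, fix a small parameter $\delta\in(0,1)$ and, for every generation $k\in\mathbb{Z}$, select a maximal $\delta^k$-separated subset of $X$ whose points serve as the centers of generation-$k$ cubes; the randomness enters through a random rule, parametrized by $\omega$, that assigns each generation-$k$ center to one of the available generation-$(k-1)$ centers within distance $\delta^{k-1}$. Geometric doubling bounds the number of children of every cube, while the random assignment ensures that for any fixed ball $B$ of radius $r$ and any $m\geq 0$, the probability that $B$ sits within distance $r\cdot\delta^{\eta m}$ of the boundary of the generation-$k$ cube containing it (with $\delta^k\sim \delta^{-m}r$) decays polynomially in $\delta^m$. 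This is the metric analog of the Nazarov--Treil--Volberg good/bad decomposition.

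Second, I would run the Hyt\"onen averaging trick. Pair $f,g\in L^2(d\mu)$ and expand
\begin{equation*}
\langle Tf,g\rangle \;=\; \mathbb{E}_\omega\sum_{Q,R\in\mathcal{D}^\omega}\langle T\Delta_Q f,\Delta_R g\rangle,
\end{equation*}
where $\Delta_Q$ are Haar-like martingale differences adapted to the lattice. Group the pairs $(Q,R)$ by the relative generation difference $m:=|k(Q)-k(R)|$ and by their smallest common ancestor. The crucial refinement of \cite{Hyt} is that pairs in which the smaller cube is ``bad'' with respect to the larger one have zero expectation after reorganization, so one may restrict attention to pairs of good cubes. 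The restricted sum decomposes into a series $\sum_{m\geq 0}\mathbb{S}_m$ of generalized dyadic shifts of complexity $m$, plus paraproducts, with an extra gain $C\delta^{\eta m}$ coming from the H\"older smoothness of the \cz kernel against the boundary layer that good cubes avoid.

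Third, I would invoke the linear weighted estimate from Sections~4 and~5. For each shift,
\begin{equation*}
\|\mathbb{S}_m\|_{L^2(w\,d\mu)\to L^2(w\,d\mu)} \;\leq\; C(1+m)[w]_{2,\mu},
\end{equation*}
by the Bellman-function / stopping-time argument of \cite{NV} (or alternatively \cite{Tr}); the martingale nature of those proofs makes them transfer to the metric setting once the random lattice is available. The paraproduct part is controlled linearly in $[w]_{2,\mu}$ via the weighted Carleson embedding theorem on $(X,\mu)$, which is where the doubling of $\mu$ (rather than merely the geometric doubling of $X$) is used. Summing over $m$, the geometric factor $\delta^{\eta m}$ defeats the linear growth $(1+m)$ and yields \eqref{mainest}.

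The main obstacle is the first step: producing a random lattice on a general metric space whose cubes are nested, of controlled doubling, and satisfy a sharp thin-boundary-in-expectation estimate. Unlike in $\mathbb{R}^n$, there is no translation group to drive the randomness, the cubes have genuinely irregular shapes, and one must tune the random assignment so that the boundary-hit probability decays at a rate strong enough to make $\sum_m \delta^{\eta m}(1+m)$ summable against the complexity of each shift. Once this is in place, the remainder is a careful transcription of the Euclidean template of \cite{HPTV}, with the conceptual simplification that bad cubes can be discarded outright rather than made arbitrarily rare.
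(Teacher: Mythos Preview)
Your three-step outline matches the paper's strategy exactly: random dyadic lattices, Hyt\"onen's averaging trick to reduce to good pairs and then to dyadic shifts plus paraproducts, and finally the linear-in-$[w]_{A_2}$ shift bound of \cite{NV}. The one substantive difference is in the randomization of the lattice. You describe fixing maximal $\delta^k$-separated sets at \emph{every} scale and letting the randomness act only through the parent assignment; this is essentially the Hyt\"onen--Martikainen model \cite{HM}. The paper instead builds the lattice \emph{bottom to top}: only the finest grid $G_N$ is fixed, and each coarser grid $G_{k-1}$ is chosen as a \emph{random} maximal $\delta^{k-1}$-net inside $G_k$. The randomness of the coarse centers themselves is what drives Lemma~\ref{MAIN} (a uniform lower bound on the probability that a fixed fine point lies close to a coarse center), from which the thin-boundary estimate Lemma~\ref{sloy} and the bad-cube bound follow cleanly. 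The paper singles this out as its main technical contribution and explicitly remarks that with your (HM-style) construction the required independence between a cube and its ancestors in the \emph{same} lattice is less transparent, though it ``most probably'' works too. So your plan is sound, but the mechanism you would actually have to verify in Step~1 is not the one the paper carries out.
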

We postpone precise definitions to the Section \ref{A2theorem}. The precise definition of a geometrically doubling metric space is given in the next section.

\section{First step}\label{randlatt}
Consider a  compact doubling metric space $X$ with metric $d$ and doubling constant $A$. Instead of $d(x,y)$ we write $|xy|$.
Precisely, the definition is the following.
\begin{defin}
Suppose $(X, |.|)$ is a metric space. We call it geometrically doubling with constant $A$, if for any $x\in X$ and $r>0$ we can fit no more than $A$ disjoint balls of radius $r/2$ in the ball $B(x, r)$. 
\end{defin}

As authors of ~\cite{HM}, we essentially use the idea of Michael Christ ~\cite{Chr}, but randomize his construction in a different way. Therefore, we want to guard the reader that even though on the surface the proof below is very close to the proof from ~\cite{HM}, however, our construction is essentially different, and so the proof of the assertion in our main lemma, which was not hard in ~\cite{HM}, becomes much more subtle here.

The main difference between the construction \cite{HM} and here is that the one here is of ``bottom to top'' type, meaning that the centers of ``father cubes'' are chosen randomly, after the centers of ``son cubes'' are fixed. The construction in ~\cite{HM} goes ``top to bottom'', and it is not that clear to us why ``father cubes'' have enough independence from ``son cubes'' to ensure that in the model where elementary event is {\bf one} dyadic lattice, the event for a cube of a lattice to be ``bad'' (see the definition below) with respect to cubes of the {\bf same} lattice is strictly less then one. However, we still feel that the construction of \cite{HM} can most probably be used for the purposes of our result as well, we  just feel that it is  a bit more easy to follow that everything  falls in its place with our construction below.

We now proceed to the construction.

For a number $k>0$ we say that a set $G$ is a $k$-grid if $G$ is maximal (with respect to inclusion) set, such that for any $x,y\in G$ we have $d(x,y)> k$.

Let from now on $\diam X=1$. Take a small positive number $\delta \ll 1$ depending on the doubling constant of $X$ and a large natural number $N$, and for every $M\geqslant N$ fix $G_M=\{z_M^\a\}$,  a certain $\delta^M$-grid of $X$. Now take $G_N$ and randomly choose a $G_{N-1}=\delta^{N-1}$-grid in $G_N$. Then take $G_{N-1}$ and randomly choose a $G_{N-2}=\delta^{N-2}$-grid in $G_{N-1}$.  Do this $N$ times. Notice that $G_0$ consists of just one random point of $G_N$.

We explain what is ``randomly''. Since $X$ is a compact metric space, all $G_k$'s are finite. Therefore, there are finitely many $(N-1)$-grids in $G_N$. We choose one of them with a probability 
$$
\frac{1}{\mbox{number of $(N-1)$-grids in $G_N$}}.$$

Our first lemma is the following.
\begin{lemma}
\label{lm21}
For $k=0,\dots, N$
$$
\bigcup\limits_{y\in G_{N-k}} B(y, 3\delta^{N-k}) = X.
$$
\end{lemma}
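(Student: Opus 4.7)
My plan is to proceed by induction on $k$, exploiting the successive maximality built into the construction. The base case $k=0$ is immediate from the definition: $G_N$ is a $\delta^N$-grid of $X$, so by maximality, for every $x\in X$ either $x\in G_N$ or else adjoining $x$ would violate the separation condition, which means there exists $y\in G_N$ with $d(x,y)\le\delta^N<3\delta^N$. Hence $X=\bigcup_{y\in G_N}B(y,3\delta^N)$.

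For the inductive step, assume the covering property holds for $G_{N-k+1}$ and let $x\in X$. By the inductive hypothesis there is some $y\in G_{N-k+1}$ with $d(x,y)<3\delta^{N-k+1}$. Now $G_{N-k}\subset G_{N-k+1}$ is a $\delta^{N-k}$-grid inside $G_{N-k+1}$, i.e.\ a maximal $\delta^{N-k}$-separated subset of $G_{N-k+1}$. Applying maximality to the point $y\in G_{N-k+1}$ yields some $z\in G_{N-k}$ with $d(y,z)\le\delta^{N-k}$ (either $y=z\in G_{N-k}$, or else $G_{N-k}\cup\{y\}$ would still be $\delta^{N-k}$-separated in $G_{N-k+1}$, contradicting maximality).

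Combining the two estimates by the triangle inequality,
\[
d(x,z)\le d(x,y)+d(y,z)<3\delta^{N-k+1}+\delta^{N-k}=(1+3\delta)\delta^{N-k}.
\]
Since $\delta\ll 1$ (in particular $\delta<2/3$ is more than enough), the right-hand side is strictly less than $3\delta^{N-k}$, so $x\in B(z,3\delta^{N-k})$ with $z\in G_{N-k}$. This closes the induction.

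The argument is essentially a triangle-inequality exercise, so no step is a real obstacle; the only point that deserves attention is that $G_{N-k}$ is maximal as a subset of $G_{N-k+1}$ rather than of $X$, which is precisely why the factor $3$ (rather than $1$) in the radius is needed to absorb the accumulated error $\sum_{j\ge 0}\delta^{j}$ under iteration, and why the smallness of $\delta$ is used.
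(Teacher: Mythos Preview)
Your proof is correct and follows essentially the same approach as the paper's: both arguments exploit the maximality of $G_N$ in $X$ and of each $G_{j}$ in $G_{j+1}$, then control the distance via the triangle inequality along the resulting chain. The only cosmetic difference is that you organize this as an induction on $k$, absorbing the error one level at a time via $(1+3\delta)\delta^{N-k}<3\delta^{N-k}$, whereas the paper chains directly from level $N$ down to level $N-k$ and sums the geometric series $\delta^N+\cdots+\delta^{N-k}\le \delta^{N-k}/(1-\delta)\le 2\delta^{N-k}$, obtaining the slightly sharper constant $2$ in place of $3$.
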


\begin{zamech} For $N+k, k\ge 0,$  instead of $N-k$ this is obvious.
\end{zamech}

\begin{proof}
Take $x\in X$. 
Then, since $G_{N}$ is maximal, there exists a point $y_0 \in G_{N}$, such that $|xy_0|\leqslant \delta^{N}$. Since $G_{N-1}$ is maximal in $G_N$, there is a point $y_1\in G_{N-1}$, such that $|y_0y_1|\leqslant \delta^{N-1}$. Similarly we get $y_2, \ldots, y_k$ and then
$$
|xy_k| \leqslant |xy_0|+\ldots + |xy_k|\leqslant \delta^{N}+\ldots + \delta^{N-k} = \delta^{N-k}(1+\delta+\ldots + \delta^{k})\leqslant \frac{\delta^{N-k}}{1-\delta} \leqslant 2\delta^{N-k}.
$$
\end{proof}

Once we have all our sets $G_N$, we introduce a relationship $\prec$ between points. We follow ~\cite{HM} and ~\cite{Chr}.

Take a point $y_{k+1}\in G_{k+1}$. There exists at most one $y_k\in G_k$, such that $|y_{k+1}y_k|\leqslant \frac{\delta^{k}}{4}$. This is true since if there are two such points $y_k^1, \; y_k^2$, then
$$
|y_k^1 y_k^2|\leqslant \frac{\delta^{k}}{2},
$$
which is a contradiction, since $G_k$ was a $\delta^{k}$-grid in $G_{k+1}$.

Also there exists at least one $z_k\in G_k$ such that $|y_{k+1} z_k|\leqslant 3 \delta^{k}$. This is true by the lemma.

Now, if there exists an $y_k$ as above, we set $y_{k+1}\prec y_k$. If no, then we pick one of $z_k$ as above and set $y_{k+1}\prec z_k$. For all other $x\in G_{k}$ we set $y_{k+1}\not\prec x$. Then extend by transitivity.

We also assume that $y_k\prec y_k$. This is if $y_k$ on the left happened to belong already to $G_{k+1}$.

We do this procedure randomly and independently, and treat same families of $G_k$'s with different $\prec$-law as different families.

Take now a point $y_k\in G_k$ and define
$$
Q_{y_k} = \bigcup\limits_{z\prec y_k, z\in G_{\ell}} B(z, \frac{\delta^{\ell}}{100}).
$$

\begin{lemma}
\label{cover}
For every $k$ we have
$$
X=\bigcup\limits_{y_k\in G_k} \textup{clos}(Q_{y_k})
$$
\end{lemma}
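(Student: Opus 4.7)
The plan is to fix $x \in X$ and $k$, exhibit a sequence $y_\ell \in G_\ell$ with $y_\ell \to x$, show each such $y_\ell$ has a unique ``ancestor'' in $G_k$ under $\prec$, and then use pigeonhole on the finite set $G_k$ to find a common ancestor $y_k^* \in G_k$ for infinitely many $y_\ell$, which forces $x \in \text{clos}(Q_{y_k^*})$. For each $\ell \geq \max(k, N)$, maximality of $G_\ell$ as a $\delta^\ell$-grid of $X$ gives some $y_\ell \in G_\ell$ with $|x y_\ell| \leq \delta^\ell$; in particular $y_\ell \to x$ as $\ell \to \infty$.

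Next, the construction of $\prec$ makes the parent map functional: each $y_{j+1} \in G_{j+1}$ has exactly one $y_j \in G_j$ with $y_{j+1} \prec y_j$. Indeed, at most one $y_j \in G_j$ can lie within $\delta^j/4$ of $y_{j+1}$ (two such would be within $\delta^j/2$ of each other, violating the $\delta^j$-separation of $G_j$); if such a $y_j$ exists it is declared the parent, and otherwise a single $z_j \in G_j$ with $|y_{j+1} z_j| \leq 3\delta^j$ is chosen as the parent (this exists by Lemma \ref{lm21}). Iterating and using transitivity, every $y_\ell$ with $\ell \geq k$ has a unique ancestor $a(y_\ell) \in G_k$.

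Since $X$ is compact and the points of $G_k$ are pairwise $\delta^k$-separated, $G_k$ is finite. By pigeonhole, some $y_k^* \in G_k$ equals $a(y_\ell)$ for infinitely many $\ell$. Along this subsequence $y_\ell \prec y_k^*$, so $y_\ell \in B(y_\ell, \delta^\ell / 100) \subset Q_{y_k^*}$, and $y_\ell \to x$ yields $x \in \text{clos}(Q_{y_k^*})$. The main conceptual point is recognizing that the ``bottom-to-top'' construction of $\prec$ makes the ancestor map single-valued; technically, the closure is genuinely needed because $x$ may fail to sit inside any single ball $B(z, \delta^\ell/100)$ while still being the limit of their centers.
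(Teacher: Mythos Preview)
Your proof is correct and follows essentially the same route as the paper's: pick points $y_\ell\in G_\ell$ converging to $x$, trace each back along $\prec$ to a unique ancestor in $G_k$, then use pigeonhole on the finite set $G_k$ to find a common ancestor $y_k^*$ and conclude $x\in\text{clos}(Q_{y_k^*})$. The only cosmetic differences are that the paper invokes Lemma~\ref{lm21} (giving $|xx_m|\le 3\delta^m$) rather than direct maximality of $G_\ell$ for $\ell\ge N$, and it bounds the ancestors inside $B(x,15\delta^k)$ via the chain estimate rather than simply citing finiteness of $G_k$; neither change affects the argument.
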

\begin{zamech} There is only one point in $G_0$,  and ${clos}(Q_{y}), y\in G_0,$ is just $X$. But for small $\delta$, $X=\bigcup\limits_{y_1\in G_1} \textup{clos}(Q_{y_1})$ is a genuine (and random) splitting of $X$.
\end{zamech}

\begin{proof}
Take any $x\in X$. By the previous lemma, for every $m> k$ there exists a point $x_m\in G_m$, such that $|xx_m|\leqslant 3\delta^{m}$. In particular, $x_m \to x$. Fix for a moment $x_m$. Then there are points $y_{m-1}\in G_{m-1}, \ldots, y_k\in G_{k}$, such that $x_m \prec y_{m-1}\prec \ldots \prec y_k$. In particular, $x_m \in Q_{y_k}$, where $y_k$ depends on $x_m$. Then
$$
|y_k x| \leqslant |y_k x_m| + |x_m x|\leqslant |y_k x_m| + 3\delta^{m} \leqslant |y_k x_m| + 3\delta^{k}.
$$
Moreover, by the chain of $\prec$'s, we know that $|y_k x_m|\leqslant 10\delta^{k}$. Therefore,
$$
|y_k x|\leqslant 15\delta^{k}.
$$
We claim that the set $\{y_k\}=\{y_k(x_m)\}_{m\geqslant k}$ is bounded independently of $m$. This is true since all $y_k$'s are separated from each other and by the doubling of our space (we are ``stuffing'' the ball $B(x, 15\delta^{k})$ with balls $B(y_k, \delta^{k})$).

So, take an infinite subsequence $x_{m}$ that corresponds to one point $y_k\in G_k$. Then we get $x_m \in Q_{y_k}$, $x_m\to x$, so $x\in \textup{clos} Q_{y_k}$, and we are done.
\end{proof}
\begin{zamech}
Since the space $X$ is compact, our random procedure consists of finitely many steps. Therefore, our probability space is discreet. We suggest to think about all probabilities just as number of good events divided by number of all events.

However, all our estimates will not depend on number of steps (and, therefore, diameter of $X$), which is essential.
\end{zamech}
\begin{zamech}
We notice that in the Euclidian space, say, $\R$, this procedure does not give a standard dyadic lattice.
\end{zamech}
\section{Second step: technical lemmata}
Define
$$
\tilde{Q}_{y_k} = X\setminus \bigcup_{z_k\not= y_k, z_k\in G_k}\textup{clos}\,Q_{z_k}.
$$
In particular,
$$
Q_{y_k}\subset \tilde{Q}_{y_k}\subset \textup{clos}(Q_{y_k}).
$$
\begin{lemma}[Lemma 4.5 in ~\cite{HM}]
\label{l45}
Let $m$ be a natural number, $\ep>0$, and $\delta^m \geqslant 100\ep$. Suppose $x\in \textup{clos}\,Q_{y_k}$ and $dist(x, X\setminus \tilde{Q}_{y_k})<\ep \delta^{k}$. Then for any chain
$$
z_{k+m}\prec z_{k+m-1}\prec\ldots \prec z_{k+1}\prec z_k,
$$
such that $x\in \textup{clos}\,Q_{z_{k+m}}$, the following relationships hold
$$
|z_i z_j|\geqslant \frac{\delta^{j}}{100}, \; \; \; k\leqslant j < i \leqslant k+m.
$$

\end{lemma}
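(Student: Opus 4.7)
My strategy is a proof by contradiction. I would assume there is a pair $(j,i)$ with $k \leq j < i \leq k+m$ and $|z_iz_j| < \delta^j/100$, and aim to show $\dist(x, X\setminus\tilde Q_{y_k}) \geq \ep\delta^k$, contradicting the hypothesis. The target inequality to meet is $\delta^{k+m}/100 \geq \ep\delta^k$, supplied exactly by $\delta^m\geq 100\ep$, so the natural goal is to exhibit a ball of radius on the order of $\delta^{k+m}/100$ around $x$ contained in $\tilde Q_{y_k}$.

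First I would read off the geometry of the chain. From the construction, consecutive members satisfy $|z_{\ell+1}z_\ell|\leq 3\delta^\ell$ (with the tighter bound $\delta^\ell/4$ whenever a unique close father exists), so the triangle inequality, together with $x\in\textup{clos}\, Q_{z_{k+m}}$ (which places $x$ arbitrarily close to some $\prec$-descendant of $z_{k+m}$ and hence within $\sim\delta^{k+m}/100$ of $z_{k+m}$ up to passing to a limit), controls $|xz_\ell|$ scale by scale. Next, I would exploit the nesting $G_j\subset G_{j+1}\subset\cdots\subset G_i$ together with the $\delta^i$-separation of $G_i$: the alleged collapse $|z_iz_j|<\delta^j/100$ then forces either $z_i=z_j$ (and, by the uniqueness of the close father in the rule $|y_{k+1}y_k|\leq\delta^k/4$, the intermediate $z_\ell$ must also collapse to this common point), or the quantitative constraint $\delta^{i-j}<1/100$.

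The heart of the argument is converting this rigidity into an actual ball $B(x,r)\subset\tilde Q_{y_k}$ of radius $r\geq \ep\delta^k$. For this one must verify that for every $y_k'\neq y_k$ in $G_k$, no point of $\textup{clos}\, Q_{y_k'}$ comes within $r$ of $x$. This is where the collapse is used: any competing chain ending at $y_k'$ would have to thread through grid points whose separations are incompatible with the separation profile dictated by the assumed collapse of our chain. I expect the main obstacle to be precisely this final bookkeeping---uniformly ruling out every competing $y_k'$---which depends delicately on the ``bottom-to-top'' nature of the random father assignment and on the two regimes of the father rule (the unique-close-father rule within $\delta^\ell/4$ versus the $3\delta^\ell$ fallback). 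Unlike in \cite{HM}, where fathers determine sons, here sons determine fathers, so one must additionally check that the $\prec$-relation is stable under the perturbation implicit in replacing $x$ by a nearby point, which is exactly the subtlety the authors highlight as the novelty of their construction.
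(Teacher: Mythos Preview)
Your strategy aims at the wrong target and misses the one-line observation that makes the lemma immediate. You try to place a ball of radius $\sim\delta^{k+m}/100$ around $x$ inside $\tilde Q_{y_k}$ and then propose to rule out every competing $\textup{clos}\,Q_{y_k'}$ from meeting it, invoking grid nesting $G_j\subset G_i$, collapses of intermediate $z_\ell$, and stability of the $\prec$-relation under perturbation. None of this is needed, and some of it is not even available: the inclusion $G_j\subset G_i$ holds only for levels below $N$, and the ``novelty'' the authors flag concerns the probabilistic Lemma~\ref{MAIN}, not this purely deterministic statement. More seriously, the assumed collapse $|z_iz_j|<\delta^j/100$ carries information at scale $\delta^j$, and your scheme never explains how that produces a containment at the much finer scale $\delta^{k+m}$.

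The paper's argument is a two-line triangle inequality centered at $z_j$, not at $x$. The point you are missing is that the very definition
\[
Q_{y_k}=\bigcup_{z\prec y_k,\ z\in G_\ell} B\!\left(z,\tfrac{\delta^\ell}{100}\right)
\]
gives, for free, $B(z_j,c\delta^j)\subset Q_{z_k}\subset\tilde Q_{z_k}$ once $z_j\prec z_k$. One first handles the case $z_k=y_k$: then $\dist(z_j,X\setminus\tilde Q_{y_k})\geq c\delta^j$, and the triangle inequality through $z_i$ and $x$ yields
\[
c\delta^j\leq |z_jz_i|+|z_ix|+\dist(x,X\setminus\tilde Q_{y_k})<\tfrac{\delta^j}{100}+5\delta^i+\ep\delta^k,
\]
which is impossible for $\delta$ small (using $i>j$ and $\ep\delta^k\leq\delta^{k+m}/100\leq\delta^j/100$). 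The case $z_k\neq y_k$ reduces to the first: since $x\in\textup{clos}\,Q_{y_k}$ with $y_k\neq z_k$, one has $x\in X\setminus\tilde Q_{z_k}$, so $\dist(x,X\setminus\tilde Q_{z_k})=0<\ep\delta^k$, and the same argument applies with $y_k$ replaced by $z_k$. No analysis of competing chains or of the random father rule enters at all.
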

\begin{proof}
Suppose $|z_i z_j| < \frac{\delta^{j}}{100}$. We first consider a case when $z_k=y_k$.
Since $z_j\prec z_k=y_k$, we have $B(z_j, \frac{\delta^j}{200})\subset Q_{y_k}\subset \tilde{Q}_{y_k}$. Therefore,
$$
\frac{\delta^j}{200}\leqslant dist(z_j, X\setminus \tilde{Q}_{y_k})\leqslant dist(x, X\setminus \tilde{Q}_{y_k}) + dist(x, z_i) + dist(z_i, z_j) < \ep \delta^{k} + 5\delta^i + \frac{\delta^{j}}{100}
$$
If $\delta$ is less than, say, $1\over 1000$, then we get a contradiction.

The only not obvious estimate is that $dist(x, z_i)<5\delta_i$. It is true since $x\in \textup{clos}\,Q_{z_{k+m}}$.

We have proved the lemma with assumption that $z_k=y_k$. Let us get rid of this assumption. We know that
$$
x\in \textup{clos}\,Q_{z_{k+m}}\subset \textup{clos}\,Q_{z_{k}}.
$$
Also we have $x\in \textup{clos}\,Q_{y_k}$, so, since
$$
\tilde{Q}_{z_k}=X\setminus \bigcup_{u_k\not = z_k} \textup{clos}\,Q_{u_k} \subset X\setminus \textup{clos}\,Q_{y_k},
$$
we get $x\in X\setminus \tilde{Q}_{z_k}$. In particular, $dist(x, X\setminus \tilde{Q}_{z_k})=0<\ep \delta^k$, and we are in the situation of the first part. This finishes our proof.
\end{proof}

\begin{lemma}
\label{MAIN}
Fix $x_k\in G_k$. Then
\begin{equation}
\label{ryadom}
\mathbb{P}(\exists x_{k-1}\in G_{k-1}\colon |x_k x_{k-1}|<\frac{\delta^{k-1}}{1000})\geqslant a
\end{equation}
for some $a\in (0,1)$.
\end{lemma}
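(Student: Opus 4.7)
The plan is to prove the stronger statement $\mathbb{P}(x_k\in G_{k-1})\ge a$ for some constant $a=a(A,\delta)>0$, which implies the lemma since then $x_{k-1}=x_k$ itself witnesses the event. Conditional on the realization of $G_k$, the distribution of $G_{k-1}$ is uniform on the finite family $\mathcal M$ of all maximal $\delta^{k-1}$-separated subsets of $G_k$. Write
\[
\mathcal M_\ast:=\{S\in\mathcal M:x_k\in S\},\qquad \mathcal M^-:=\{S\in\mathcal M:S\cap B(x_k,\delta^{k-1}/1000)=\emptyset\}.
\]
I will exhibit a ``swap'' map $\Phi:\mathcal M^-\to\mathcal M_\ast$ whose multiplicity is bounded by a constant $K=K(A,\delta)$. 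This forces $|\mathcal M^-|\le K|\mathcal M_\ast|\le K|\mathcal M\setminus\mathcal M^-|$, whence $|\mathcal M_\ast|/|\mathcal M|\ge 1/(K+1)=:a$.

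For the map $\Phi$, fix once and for all a linear order on $G_k$. Given $S\in\mathcal M^-$ (so in particular $x_k\notin S$), set $V_S:=S\cap\overline{B(x_k,\delta^{k-1})}$; by maximality of $S$ this set is nonempty. Define $S':=(S\setminus V_S)\cup\{x_k\}$, which is $\delta^{k-1}$-separated because every remaining point of $S\setminus V_S$ lies strictly outside $\overline{B(x_k,\delta^{k-1})}$. Greedily enlarge $S'$ in the fixed order to a maximal $\delta^{k-1}$-separated set $\Phi(S)$, and denote by $W_S$ the points that were added. The essential localization to be proved is $W_S\subseteq G_k\cap B(x_k,2\delta^{k-1})$: a point $g$ not covered by $S'$ must have been covered in $S$ only by some $v\in V_S\subseteq\overline{B(x_k,\delta^{k-1})}$, so $|gx_k|\le|gv|+|vx_k|\le 2\delta^{k-1}$, and no new uncovered points appear during the greedy phase because greedy additions only increase coverage.

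The multiplicity of $\Phi$ is controlled as follows. Given $S^\ast\in\mathcal M_\ast$, any preimage $S\in\Phi^{-1}(S^\ast)$ is recovered as $S=(S^\ast\setminus(\{x_k\}\cup W_S))\cup V_S$, with both $V_S$ and $W_S$ contained in $G_k\cap B(x_k,2\delta^{k-1})$. Geometric doubling bounds $|G_k\cap B(x_k,2\delta^{k-1})|$ by a constant $N=N(A,\delta)$ (a $\delta^k$-separated subset of a ball of radius $2\delta^{k-1}$ has at most $A^{O(\log(1/\delta))}$ points), so the number of admissible pairs $(V,W)$, and hence $K$, is at most $4^N$. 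The genuine obstacle in the argument is precisely the localization step: establishing $W_S\subseteq B(x_k,2\delta^{k-1})$ requires that deletions only uncover points within one $\delta^{k-1}$-hop of $V_S$ and that the greedy process cannot propagate holes outward. Once this locality is in place, doubling turns the problem into a bounded finite combinatorial count and the swap argument delivers the lemma with $a=(1+4^N)^{-1}$.
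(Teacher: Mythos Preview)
Your swap map is precisely the recoloring procedure the paper uses: delete the points of $S$ in the closed $\delta^{k-1}$-ball about $x_k$, insert $x_k$, and greedily repair. The paper packages the count by partitioning the ``bad'' lattices according to the deleted set $V$ and showing the map is injective on each piece, whereas you bound the multiplicity directly via the pair $(V_S,W_S)$; but the combinatorics is the same, and your localization $W_S\subset B(x_k,2\delta^{k-1})$ is exactly the paper's observation that all repairs take place inside $\tilde S$. So the approach is essentially that of the paper.

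There is, however, a bookkeeping slip in your final deduction. From $|\mathcal M^-|\le K|\mathcal M_\ast|\le K|\mathcal M\setminus\mathcal M^-|$ one obtains $|\mathcal M\setminus\mathcal M^-|/|\mathcal M|\ge 1/(K+1)$, which is already the statement of the lemma, but \emph{not} the stronger inequality $|\mathcal M_\ast|/|\mathcal M|\ge 1/(K+1)$ that you wrote (the inclusion $\mathcal M_\ast\subseteq\mathcal M\setminus\mathcal M^-$ goes the wrong way for that). If you want the stronger bound $\mathbb P(x_k\in G_{k-1})\ge a$---which the paper also proves---simply take the domain of $\Phi$ to be all of $\mathcal M\setminus\mathcal M_\ast$ rather than $\mathcal M^-$. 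Your construction, the localization of $W_S$, and the multiplicity estimate go through verbatim for any $S$ with $x_k\notin S$ (maximality still gives $V_S\neq\emptyset$), and then $|\mathcal M\setminus\mathcal M_\ast|\le K|\mathcal M_\ast|$ yields $|\mathcal M_\ast|/|\mathcal M|\ge 1/(K+1)$ as announced.
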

\begin{proof}
We remind that we are in a compact metric situation. By rescaling we can think that we work with $G_1$ and choose $G_0$. We can even think that the metric space consists of finitely many points, it is $X:=G_2$. The finite set $G_1\subset X$ consists of points  having the following properties:

\noindent 1. $\forall x,y\in G_1$ we have $|xy|\ge \delta$;

\noindent 2. if $z\in X\setminus G_1$ then $\exists x\in G_1$ such that $|zx|<\delta$.

These two properties are equivalent to saying that the subset $G_1$ of $X$ consists of points such that $\forall x,y\in G_1$ we have $|xy|\ge \delta$ and we cannot add any point from $X$ to $G_1$ without violating that property. In other words: $G_1$ is a {\it maximal} set with property 1.

Recall that here the word ``maximal'' means maximal with respect to inclusion, not maximal in the sense of the number of elements.

Now we consider the new metric space $Y=G_1$ and $G_0$ is any maximal subset such that
\begin{equation}
\label{max1}
\forall x, y\in G_0\,,\, |xy|\ge 1\,.
\end{equation}
In other words, we have
\noindent 1. $\forall x,y\in G_0$ we have $|xy|\ge 1$;

\noindent 2. if $z\in Y\setminus G_0$ then $\exists x\in G_0$ such that $|zx|<1$.

There are finitely many such maximal subsets $G_0$ of $Y$. We prescribe for each choice the same probability.
Now we want to prove the claim that is even stronger than \eqref{ryadom}. Namely, we are going to prove that given $y\in Y$

\begin{equation}
\label{ryadom1}
\mathbb{P}(\exists x_{0}\in G_{0}\colon x_0 =y)\geqslant a\,,
\end{equation}
where $a$ depends only on $\delta$ and the constants of geometric doubling of our compact metric space.

Let $Y$ be any metric space with finitely many elements. We will color the points of $Y$ into red and green colors. The coloring is called proper if

\noindent 1. every red point does not have any other red point at distance $<1$;

\noindent 2. every green point has at least one red point at distance $<1$.

Given {\it a proper coloring} of $Y$ the collection of red points is called $1$-{\it lattice}. It is a maximal (by inclusion) collection of points at distance $\ge 1$ from each other.

What we need to finish the proof is

\begin{lemma}
\label{finiteY}
Let $Y$ be a finite metric space as above. Assume $Y$ has the following property:
\begin{equation}
\label{finite}
\text{In every ball of radius  less than}\,\, 1\,\,\text{ there are at most}\,\, d\,\,\text{ elements}\,.
\end{equation}
 Let $\mathcal{L}$ be a collection of $1$-lattices in $Y$. Elements of $\mathcal{L}$ are called $L$. Let $v\in Y$. Then
$$
\frac{\text{the number of 1-lattices L such that v belongs to L}}{\text{the total number of 1-lattices L}} \ge a>0\,,
$$
where $a$ depends only on $d$.
\end{lemma}

\begin{proof}
Given $v\in Y$ consider all subsets of $B(v,1)\setminus{v}$, this collection is called $\mathcal{S}$. Let $S\in \mathcal{S}$. We call $W_S$ the collection of all proper colorings such that $v$ is green, all elements of $S$ are red, and all elements of $B(v,1) \setminus S$ are green. We call $\tilde S$ all points in $Y$, which are not in $B(v,1)$, but at distance $<1$ from some point in $S$.

All proper colorings of $Y$ such that $v$ is red are called $B$. Let us show that
\begin{equation}
\label{2d}
\text{card}\, W_S\le \text{card}\, B\,.
\end{equation}
Notice that if \eqref{2d} were proved, we would be done with Lemma \ref{finiteY}, $a\ge 2^{-d+1}$, and, consequently, the proof of the main lemma would be finished, $a\ge 2^{-\delta^{-D}}$, where $D$ is a geometric doubling constant.

To prove \eqref{2d} let us show that we can recolor any proper coloring from $W_S$ into the one from $B$, and that this map is injective. Let $L\in W_S$. We

\noindent 1. Color $v$ into red;

\noindent 2. Color $S$ into green;

\noindent 3. Elements of $\tilde S$ were all green before. We leave them green, but we find among them all those $y$ that now in the open ball $B(y,1)$  in $Y$ all elements are green. We call them yellow (temporarily) and denote them
$Z$;

\noindent 4. We enumerate $Z$ in any way (non-uniqueness is here, but we do not care);

\noindent 5. In the order of enumeration color yellow points to red, ensuring that we skip recoloring of a point in $Z$ if it is at $<1$  distance to any previously colored yellow-to-red point from $Z$. After several steps all green and yellow elements of $\tilde S$ will have the property that at distance $<1$ there is a red point;

\noindent 6. Color the rest of yellow (if any) into green and stop.

We result in a proper coloring (it is easy to check), which is obviously $B$. Suppose $L_1, L_2$ are two different proper coloring in $W_S$. Notice that the colors of $v, S, B(v,1)\setminus S$, $\tilde S$ are the same for them. So they differ somewhere else. But our procedure does not touch ``somewhere else". So the modified colorings $L_1', L_2'$ that we obtain after the algorithm 1-6 will differ as well may be even more). So our map $W_S\rightarrow B$ (being not uniquely defined) is however injective. We proved \eqref{2d}.

\end{proof}

Thus, the proof of the Lemma \ref{MAIN} is finished.

\end{proof}

\noindent{\bf Remark.} We are grateful to Michael Shapiro and Dapeng Zhan who helped us to prove Lemma  \ref{MAIN}.

\section{Main definition and theorem}
Fix a number $\gamma$, $0<\gamma<1$. Later the choice of $\gamma$ will be dictated by the Calder\'{o}n-Zygmund properties of the operator $T$. Also fix a sufficiently big $r$. The choice of $r$ will be made in this section.

\begin{defin}[Bad cubes]
Take a ``cube'' $Q=Q_{x_k}$. We say that $Q$ is good if there exists a cube $Q_1=Q_{x_n}$, such that
if
$$
\delta^k \leqslant \delta^{r} \delta^n \; \; \; (k\geqslant n+r)
$$
then either
$$
dist(Q, Q_1)\geqslant \delta^{k\gamma}\delta^{n(1-\gamma)}
$$
or
$$
dist(Q, X\setminus Q_1)\geqslant \delta^{k\gamma}\delta^{n(1-\gamma)}.
$$
\end{defin}
\begin{zamech}
Notice that $\delta^k=\ell(Q)$ just by definition.
\end{zamech}

If $Q$ is not good we call it bad.
\begin{theorem}
Fix a cube $Q_{x_k}$. Then
$$
\mathbb{P}(Q_{x_k}\; \mbox{is bad}\:) \leqslant \frac{1}{2}.
$$
\end{theorem}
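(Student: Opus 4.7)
The strategy is a union bound over the candidate ``parent'' scales $n\le k-r$, combined with a per-scale probability estimate of the form $\mathbb{P}(\text{bad at scale }n)\le C\delta^{(k-n)\gamma\kappa}$ for some positive $\kappa=\kappa(A,\delta)$. Once this is in hand, a geometric series gives
$$
\mathbb{P}(Q_{x_k}\text{ bad})\le \sum_{m\ge r} C\delta^{m\gamma\kappa}\le \tfrac{1}{2}
$$
as soon as $r$ is chosen sufficiently large (depending on $\gamma$, $\delta$, and the doubling constant $A$).

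First I would reformulate badness geometrically. Interpreting the definition in the standard NTV sense, $Q_{x_k}$ is bad iff there exist $n\le k-r$ and $x_n\in G_n$ with both $\dist(Q_{x_k},Q_{x_n})<\delta^{k\gamma+n(1-\gamma)}$ and $\dist(Q_{x_k},X\setminus Q_{x_n})<\delta^{k\gamma+n(1-\gamma)}$, which forces $Q_{x_k}$ to lie in the $O(\delta^{k\gamma+n(1-\gamma)})$-neighborhood of the boundary $\partial Q_{x_n}$. A union bound reduces the task to bounding, for each $n$, the probability $\mathbb{P}(E_n)$ of this near-boundary event.

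Next I would estimate $\mathbb{P}(E_n)$ using the randomness of the $\prec$-assignment at the intermediate levels $n<m\le k$. The boundary $\partial Q_{x_n}$ meets a neighborhood of $x_k$ only where two ancestor chains starting near $x_k$ split into distinct elements of $G_n$. Hence $E_n$ implies the existence of some $m\in(n,k]$ and some $y_m\in G_m$ at distance $\lesssim \delta^{k\gamma+n(1-\gamma)}+\delta^m$ from $x_k$ whose eventual ancestor in $G_n$ differs from that of $x_k$. Geometric doubling gives a uniform bound on the number of such candidates at each scale. For each candidate, the probability that its $\prec$-chain diverges from $x_k$'s is controlled, level by level, by Lemma~\ref{MAIN}, which certifies that the random parent-choices are genuinely non-trivial. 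Iterating across the roughly $k-n$ intermediate levels (and comparing the ``forcing radius'' $\delta^{m-1}/4$ with the actual geometry) converts these local ``non-triviality'' bounds into the geometric decay $\delta^{(k-n)\gamma\kappa}$ via the standard observation that the critical level is $m^{\star}\approx k\gamma+n(1-\gamma)$: above $m^{\star}$ the candidate $y_m$ is too close to $x_k$ to be split with significant probability, while below $m^{\star}$ the $\eta$-neighborhood captures only a small fraction of the admissible configurations.

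The main obstacle is making this per-scale estimate rigorous. Unlike the Euclidean NTV setting, where the randomness is a single global translation and the near-boundary event has an explicit Lebesgue-measure bound, here the randomness is a discrete, level-by-level assignment of parents, with hard constraints (a point of $G_{m-1}$ within $\delta^{m-1}/4$ of $y_m$ \emph{forces} the choice; otherwise one picks uniformly among candidates within $3\delta^{m-1}$). Lemma~\ref{MAIN} provides only a crude lower bound $a\in(0,1)$ for certain favorable events; upgrading this into the fine quantitative estimate that captures the correct scaling in $\eta/\delta^m$, while handling the conditioning on the already-fixed finer grids and the correlations induced by the ``bottom-to-top'' construction, will be the technical heart of the proof. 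The union bound and geometric summation are then routine.
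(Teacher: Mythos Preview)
Your high-level plan---union bound over scales $n\le k-r$, per-scale geometric decay, then sum the geometric series and choose $r$ large---matches the paper exactly. The difference is in how the per-scale bound is obtained, and here the paper's route is considerably simpler than the ``diverging ancestor chains'' picture you sketch.

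The paper first reduces from the cube $Q_{x_k}$ to its center point $x_k$ (trivial, since $\diam Q_{x_k}\le C\delta^k\ll \delta^{k\gamma+n(1-\gamma)}$ once $r$ is large). It then applies Lemma~\ref{sloy}: for a \emph{fixed} point $x$, the probability that $x$ lies in the $\epsilon$-boundary layer $\delta_{Q}(\epsilon)$ of the random level-$n$ cube is at most $C\epsilon^{\eta}$. With $\epsilon=2\delta^{(k-n)\gamma}$ this gives precisely the decay $\delta^{(k-n)\gamma\eta}$ you want.

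The engine behind Lemma~\ref{sloy} is not an analysis of when nearby chains split. It is the deterministic Lemma~\ref{l45}: if $x$ sits in the $\epsilon$-boundary layer of a level-$n$ cube and $m$ is chosen so that $\delta^m\gtrsim\epsilon$, then in \emph{any} $\prec$-chain $z_{n+m}\prec z_{n+m-1}\prec\cdots\prec z_n$ with $x\in\textup{clos}\,Q_{z_{n+m}}$, every consecutive pair satisfies $|z_{n+j}\,z_{n+j-1}|\ge \delta^{n+j-1}/100$. In other words, being near the boundary forces \emph{all} $m$ parent choices along the single chain through $x$ to be ``far''. Lemma~\ref{MAIN} says a ``close'' parent exists at each level with probability $\ge a$, and the bottom-to-top construction makes these events independent across levels, so the probability of $m$ consecutive ``far'' choices is $\le(1-a)^m\le C\epsilon^{\eta}$ with $\eta=\log_{\delta}(1-a)$.

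Your worries about correlations and about extracting ``the correct scaling in $\eta/\delta^m$'' from Lemma~\ref{MAIN} largely dissolve under this reformulation: Lemma~\ref{l45} is purely deterministic, the only probabilistic input is the crude bound $a$ from Lemma~\ref{MAIN} applied once per level, and there is no need to follow several nearby points or compare their ancestors---you track only the single $\prec$-chain through the fixed center $x_k$.
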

\begin{zamech}[Discussion]
This theorem makes sense because when we fix a cube $Q_k$, say, $k\geqslant N$, so the grid $G_k$ is not even random, we can make big cubes random. And we claim that for big quantity of choices, our big cubes will have $Q_k$ either ``in the middle'' or far away, but not close to the boundary.
\end{zamech}

\begin{defin}
For $Q=Q_{x_k}$ define
$$
\delta_{Q}(\ep)=\delta_{Q}=\{x\colon dist(x, Q)\leqslant \ep\delta^{k} \;\mbox{and}\; dist(x, X\setminus Q)\leqslant \ep\delta^{k}\}
$$
\end{defin}

\begin{lemma}\label{sloy}
Let us start with level $N$ by fixing a $\delta^N$-grid (non-random), and let $k<N$, $x_k$ denoting the points  of the (random) grid $G_k$. Fix a point $x\in X$.
$$
\mathbb{P}(\exists x_k\in G_k:\,x\in \delta_{Q_{x_k}}) \leqslant \ep^{\eta}
$$
for some $\eta>0$.
\end{lemma}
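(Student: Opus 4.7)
The plan is to exploit the independence of the random grid construction across scales. Let $m=\lceil\log(1/\ep)/\log(1/\delta)\rceil$, so that $\delta^m\lesssim\ep$, and let $V\subset G_{k+m}$ be the set of level-$(k+m)$ grid points lying within $O(\delta^{k+m})$ of $x$. By geometric doubling $|V|=O(1)$. A standard unwinding of the definition of $\delta_{Q_{x_k}}(\ep)$ using the structure of $Q_{y_k}$ as a union of balls of radius $\delta^\ell/100$ shows that $x\in\delta_{Q_{x_k}}(\ep)$ can only happen if two points of $V$ have different level-$k$ chain-ancestors. Therefore it suffices to bound the probability that not all $y\in V$ share a common level-$k$ ancestor via the $\prec$-chain.

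For each $\ell\in[k+1,k+m]$ define the good event $E_\ell$: there exists $p_\ell\in G_{\ell-1}$ with $|xp_\ell|<\delta^{\ell-1}/500$. If $E_\ell$ occurs, then for every $z\in G_\ell$ with $|zx|\leq 5\delta^\ell$ (which, by the chain estimate $|z_{j+1}z_j|\leq 3\delta^j$ and for $\delta$ small, includes the level-$\ell$ ancestor of every $y\in V$) one has $|zp_\ell|<\delta^{\ell-1}/4$. The $\delta^{\ell-1}$-separation of $G_{\ell-1}$ then ensures that $p_\ell$ is the \emph{unique} element of $G_{\ell-1}$ in that ball, so the $\prec$-rule forces $z\prec p_\ell$. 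Hence all level-$\ell$ ancestors of $V$ collapse to $p_\ell$, giving them a common level-$k$ ancestor. Thus the occurrence of $E_\ell$ at a \emph{single} $\ell\in[k+1,k+m]$ precludes $x\in\delta_{Q_{x_k}}(\ep)$.

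To bound $\mathbb{P}(E_\ell^c)$, condition on $G_\ell$ and let $x_\ell\in G_\ell$ be the closest point to $x$, so that $|x_\ell x|\leq 3\delta^\ell$ by Lemma \ref{lm21}. Lemma \ref{MAIN} applied to $x_\ell$ yields, with probability $\geq a>0$ over the random choice of $G_{\ell-1}\subset G_\ell$, a point $p_\ell\in G_{\ell-1}$ with $|x_\ell p_\ell|<\delta^{\ell-1}/1000$, from which $|xp_\ell|<\delta^{\ell-1}/500$ by the triangle inequality and smallness of $\delta$. Because the random subset $G_{\ell-1}$ is drawn independently at each step of the bottom-to-top construction (conditionally on $G_\ell$), the events $\{E_\ell^c\}$ use disjoint pieces of the randomness, and iterating the conditional chain rule gives
\begin{equation*}
\mathbb{P}\Bigl(\bigcap_{\ell=k+1}^{k+m}E_\ell^c\Bigr)\leq(1-a)^m\leq\ep^\eta,\qquad \eta=\frac{\log(1/(1-a))}{\log(1/\delta)}>0.
\end{equation*}

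The main obstacle is to choose $E_\ell$ sharply enough that its occurrence at a single scale already unifies all chains through $V$, while still having probability uniformly bounded below by a positive constant independent of the finer-grid realization. The bottom-to-top nature of our construction is essential here: it lets $E_\ell$ depend only on the fresh random step $G_\ell\to G_{\ell-1}$, which decouples the scales and makes the multiplicative bound $(1-a)^m$ legitimate.
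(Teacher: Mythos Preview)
Your overall strategy --- use Lemma~\ref{MAIN} at each of the $m$ intermediate scales and multiply the conditional probabilities by independence of the bottom-to-top construction --- is exactly the paper's approach, and your events $E_\ell$ are essentially equivalent to the paper's events ``$\exists\,x_{k+j-1}\in G_{k+j-1}:\ |x_{k+j}x_{k+j-1}|<\delta^{k+j-1}/1000$'' (shifting the anchor from the chain point $x_{k+j}$ to $x$ costs only a harmless $O(\delta^{k+j})$). The probabilistic part of your proof is fine.

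The gap is your first-paragraph reduction, which you dismiss as ``a standard unwinding of the definition''. The claim that $x\in\delta_{Q_{x_k}}(\ep)$ forces two points of $V\subset G_{k+m}$ to have different level-$k$ ancestors is precisely the geometric content of Lemma~\ref{l45} in the paper, and it does require proof --- in fact, as you stated it, it is not correct. The problem is that a witness to $\dist(x,Q_{z_k})\le\ep\delta^k$ may lie in a ball $B(z,\delta^\ell/100)$ with $z\in G_\ell$ for some \emph{coarse} level $\ell$ close to $k$; then $z$ is at distance $\sim\delta^\ell/100\gg\delta^{k+m}$ from $x$, and there is no reason any element of $V$ should inherit $z_k$ as its ancestor. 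So ``all of $V$ has a common ancestor'' does not by itself rule out $x\in\delta_{Q_{x_k}}(\ep)$.

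The fix is immediate and actually already implicit in your second paragraph: do not route through $V$ at all. If $E_\ell$ holds, then $|xp_\ell|<\delta^{\ell-1}/500$ with $p_\ell\in G_{\ell-1}$, so with $y_k$ the level-$k$ ancestor of $p_\ell$ one has
\[
B\bigl(x,\tfrac{\delta^{\ell-1}}{125}\bigr)\subset B\bigl(p_\ell,\tfrac{\delta^{\ell-1}}{100}\bigr)\subset Q_{y_k}\subset\tilde Q_{y_k},
\]
which for $\delta^{\ell-1}/125>\ep\delta^k$ already gives $\dist(x,X\setminus\tilde Q_{y_k})>\ep\delta^k$ and $\dist(x,Q_{z_k})>\ep\delta^k$ for every $z_k\ne y_k$. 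This is exactly the mechanism behind the proof of Lemma~\ref{l45} (the ball $B(z_j,\delta^j/200)\subset\tilde Q_{y_k}$ there). To make the inequality $\delta^{\ell-1}/125>\ep\delta^k$ hold at the worst scale $\ell=k+m$, you should choose $m$ as the paper does (largest with $\delta^m\ge 500\ep$) rather than $\lceil\log(1/\ep)/\log(1/\delta)\rceil$; this adjustment does not affect the final bound $(1-a)^m\le C\ep^\eta$.
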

\begin{proof}[Proof of the theorem]
Take the cube $Q_{x_k}$. There is a unique (random!) point $x_{k-s}$ such that $x_k\in Q_{x_{k-s}}$. Then
$$
dist(Q_{x_k}, X\setminus Q_{x_{k-s}})\geqslant dist(x_k, X\setminus Q_{x_{k-s}}) - diam(Q_{x_k}) \geqslant dist(x_k, X\setminus Q_{x_{k-s}}) - C \delta^{k}.
$$
Assume that $dist(x_k, X\setminus Q_{x_{k-s}})>2\delta^{k\gamma}\delta^{(k-s)(1-\gamma)}$ and that $s\geqslant r$ (this assumption is obvious, otherwise $Q_{x_{k-s}}$ does not affect goodness of $Q_{x_k}$).

Then, if $r$ is big enough ($\delta^{r(1-\gamma)}<\frac{1}{C}$) we get
$$
dist(Q_{x_k}, X\setminus Q_{x_{k-s}}) \geqslant \delta^{k\gamma}\delta^{(k-s)(1-\gamma)},
$$
and so $Q_{x_k}$ is good.
Therefore,
$$
\mathbb{P}(Q_{x_k} \; \mbox{is bad}\:) \leqslant C \sli_{s\geqslant r} \mathbb{P}(x_k\in \delta_{Q_{k-s}}(\ep=2\delta^{s\gamma}))\leqslant C \sli_{s\geqslant r} \delta^{\eta \gamma s}\leqslant 100C \delta^{\eta \gamma r}.
$$
By the choice of $\eta$, for sufficiently large $r$ this is less than $1\over 2$.
\end{proof}

\begin{proof}[Proof of the lemma]
Let $x_k$ be such that $x\in \text{clos}\,Q_{x_k}$ (see Lemma \ref{cover}). We will estimate $\mathbb{P} ( dist(x, X\setminus \tilde{Q}_k)<\ep \delta^{k})\,|\, x\in \text{clos}\,Q_{x_k})$.
Fix the largest $m$ such that $500\ep \leqslant \delta^{m}$. Choose a point $x_{k+m}$ such that $x\in \text{clos}\,Q_{x_{k+m}}$. Then by the main lemma
$$
\mathbb{P}(\exists x_{k+m-1}\in G_{k+m-1}\colon |x_{k+m} x_{k+m-1}|<\frac{\delta^{k+m-1}}{1000})\geqslant a.
$$
Therefore,
$$
\mathbb{P}(\forall x_{k+m-1}\in G_{k+m-1}\colon |x_{k+m} x_{k+m-1}|\geqslant\frac{\delta^{k+m-1}}{1000})\leqslant 1-a.
$$
Let now
$$
x_{k+m}\prec x_{k+m-1}.
$$
Then
$$
\mathbb{P}(\forall x_{k+m-2}\in G_{k+m-2}\colon |x_{k+m-1} x_{k+m-2}|\geqslant\frac{\delta^{k+m-2}}{1000})\leqslant 1-a.
$$
So by Lemma \ref{l45}
$$
\mathbb{P}(dist(x, X\setminus \tilde{Q}_k)<\ep \delta^{k})\leqslant \mathbb{P}(|x_{k+j}x_{k+j-1}|\geqslant \frac{\delta^{k+j-1}}{1000} \; \forall j=1,\ldots, m) \leqslant (1-a)^{m}\leqslant C\ep^{\eta}
$$
for
$$
\eta=\frac{\log{(1-a)}}{\log(\delta)}.
$$
\end{proof}
\section{Probability to be ``good'' is the same for every cube}
We make the last step to make the probability to be ``good'' not just bounded away from zero, but the same for all cubes.
We use the idea from ~\cite{HM2}.

Take a cube $Q(\omega)$. Take a random variable $\xi_{Q}(\omega^{'})$, which is equally distributed on $[0,1]$. We know that
$$
\mathbb{P}(Q \; \mbox{is good}) = p_{Q}>a>0.
$$
We call $Q$ ``really good'' if
$$
\xi_Q \in [0, \frac{a}{p_Q}].
$$
Otherwise $Q$ joins bad cubes. Then
$$
\mathbb{P}(Q \; \mbox{is really good}) = a,
$$
and we are done.

\section{Application}\label{A2theorem}
As a main application of our construction, we state the following theorem.

\begin{defin}
Let $X$ be a geometrically doubling metric space.

Let $\lambda(x,r)$ be a positive function, increasing and doubling in $r$, i.e. $\lambda(x, 2r)\leqslant C\lambda(x,r)$, where $C$ does not depend on $x$ and $r$.

Suppose $K(x,y)\colon X\times X \to \R$ is a Calderon-Zygmund kernel, associated to a function $\lambda$, i. e.
\begin{align}
&|K(x,y)|\leqslant C \min\left( \frac{1}{\lambda(x,|xy|)}, \frac{1}{\lambda(y,|xy|)}\right),\\
& |K(x,y)-K(x',y)|\leqslant C \frac{|xx'|^\ep}{|xy|^\ep \lambda(x, |xy|)}, \; \;|xy|\geqslant C|xx'|,\\
&|K(x,y)-K(x,y')|\leqslant C \frac{|yy'|^\ep}{|xy|^\ep \lambda(y, |xy|)}, \; \;|xy|\geqslant C|yy'|.
\end{align}
By $B(x,r)$ we denote the ball in $|.|$ metric, i.e., $B(x,r)=\{y\in X\colon |yx|<r\}$.

Let $\mu$ be a measure on $X$, such that $\mu(B(x,r))\leqslant C\lambda(x,r)$, where $C$ does not depend on $x$ and $r$.
We say that $T$ is a Calderon-Zygmund operator with kernel $K$ if
\begin{align}
& T \; \mbox{is bounded} \; L^2(\mu)\to L^2(\mu), \\
& Tf(x) = \int K(x,y)f(y)d\mu(y), \; \forall x\not\in \textup{supp}\mu, \; \forall f\in C_0^\infty.
\end{align}
\end{defin}

\begin{defin}
Let $w>0$ $\mu$-a.e. Define
$$
w\in A_{2, \mu}\Leftrightarrow [w]_{2, \mu} = \sup_{x,r} \frac{1}{\mu(B(x,r))} \ili_{B(x,r)} wd\mu \cdot \frac{1}{\mu(B(x,r))} \ili_{B(x,r)} w^{-1}d\mu < \infty.
$$
\end{defin}
\begin{theorem}[$A_2$ theorem for a geometrically doubling metric space]
Let $X$ be a geometrically doubling metric space, $\mu$ and $T$ as above, $w\in A_{2, \mu}$.
In addition we assume that $\mu$ is a doubling measure. Then
\begin{equation}\label{mainest}
\|T\|_{L^2(wd\mu)\to L^2(wd\mu)} \leqslant C(T, X)[w]_{2, \mu}.
\end{equation}
\end{theorem}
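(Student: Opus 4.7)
The plan is to execute the three-ingredient roadmap laid out in the introduction. For a fixed weight $w \in A_{2,\mu}$ and test pair $(f,g) \in L^2(wd\mu) \times L^2(w^{-1}d\mu)$, I would (a) expand both functions in a Haar martingale basis associated to a realization of the random lattice of Section \ref{randlatt}, (b) apply the averaging trick to discard cubes that are ``bad'' in that lattice and reduce $\langle Tf,g\rangle_\mu$ to a sum of dyadic shifts and paraproducts, and (c) apply a linear-in-$[w]_{A_2}$ estimate for each shift and each paraproduct, summing over complexities at the end.

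More concretely, I would first fix a realization $\omega$ and, using that $\mu$ is doubling, build for each $Q \in \mathcal{D}(\omega)$ a finite-dimensional Haar system $\{h_Q^j\}_j$ of martingale differences with respect to $\mu$, so that $f = \sum_Q \Delta_Q^\mu f$ in $L^2(\mu)$. The bilinear form
\[
\langle Tf,g\rangle_\mu \;=\; \sum_{Q,R \in \mathcal{D}(\omega)} \langle T\Delta_Q^\mu f,\,\Delta_R^\mu g\rangle_\mu
\]
is then analysed according to the relative position and scale of $Q$ and $R$.

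Second, I would invoke the averaging device. By Section~5 every cube is ``really good'' with the same probability $a>0$, and in the bottom-to-top construction of Section \ref{randlatt} the event ``$Q$ is good'' depends only on the random choices of ancestors $x_{k-s}$ with $s\geqslant r$; in particular it is independent of $\Delta_Q^\mu f$. Taking expectation over $\omega$ therefore lets me restrict the double sum to pairs of good cubes at the cost of a factor $a^{-2}$. For good pairs with, say, $\ell(R)\leqslant \ell(Q)$, I would split into (i) the case $R$ well inside $Q$, which, via the CZ smoothness, produces a paraproduct plus a dyadic shift of complexity $m\sim \log_{1/\delta}(\ell(Q)/\ell(R))$; (ii) the case where goodness forces a separation $\mathrm{dist}(R,\partial Q)\gtrsim \ell(R)^{\gamma}\ell(Q)^{1-\gamma}$, which again yields a shift of complexity $m$ but now with an extra geometric decay factor $\delta^{\eta m}$ extracted from the CZ kernel estimate; and (iii) the finitely many ``diagonal'' pairs of comparable scale and location, handled directly.

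Third, the paraproducts are controlled by a Carleson embedding theorem with linear dependence on $[w]_{A_2}$, and each dyadic shift $\mathbb{S}_m$ of complexity $m$ satisfies $\|\mathbb{S}_m\|_{L^2(wd\mu)\to L^2(wd\mu)}\leqslant C(X)\,P(m)\,[w]_{2,\mu}$ with $P$ polynomial, by the Bellman-function argument of \cite{NV} (or its variant in \cite{Tr}) adapted to the metric lattice built in Section \ref{randlatt}. Summing against the decay $\delta^{\eta m}$ yields a convergent series and hence \eqref{mainest}. The main obstacle I anticipate is precisely the independence step: to legitimately separate ``goodness of $Q$'' from the Haar data on $Q$ in the expectation, one must exploit the bottom-to-top nature of our construction (fathers sampled after sons are fixed), which is what guarantees that the event depends only on randomness at scales $\geqslant \delta^{k-r}$. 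A secondary but essential obstacle is securing \emph{polynomial} rather than exponential complexity dependence $P(m)$ in the shift norm, since without this the geometric decay $\delta^{\eta m}$ could not absorb the sum over $m$.
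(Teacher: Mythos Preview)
Your roadmap matches the paper's, and steps (a) and (c) are exactly what the authors do. The gap is in step (b), the averaging device. You propose to restrict to \emph{pairs of good cubes} at the cost of a factor $a^{-2}$, arguing that goodness of $Q$ is independent of $\Delta_Q^\mu f$. But for that reduction you would need more: that $\mathbf{1}_{\{Q\text{ good}\}}\mathbf{1}_{\{R\text{ good}\}}$ is jointly independent of $(Th_Q^j,h_R^i)(f,h_Q^j)(g,h_R^i)$ \emph{and} that the two goodness events are independent of each other. When $R\subset Q$ the goodness of $R$ depends on the random ancestors of $R$, among which sits $Q$ itself and its ancestors; the two events are correlated, so the $a^{-2}$ factorization fails. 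This is precisely why the paper (following \cite{Hyt}) does \emph{not} restrict to both cubes good. Theorem~\ref{rand1} instead proves the \emph{equality}
\[
\pi_{good}\,\E\!\!\sum_{\ell(Q)\geqslant\ell(R)}(Th_Q^j,h_R^i)(f,h_Q^j)(g,h_R^i)
\;=\;
\E\!\!\sum_{\substack{\ell(Q)\geqslant\ell(R)\\ R\text{ good}}}(Th_Q^j,h_R^i)(f,h_Q^j)(g,h_R^i),
\]
restricting only the \emph{smaller} cube. The argument is a two-step subtraction: first one conditions on $R$ good over \emph{all} pairs (using $\E g_{good}=\pi_{good}(g-\langle g\rangle\chi_X)$), then over pairs with $\ell(Q)<\ell(R)$ goodness of $R$ is genuinely independent of $Q$, and subtracting recovers the range $\ell(Q)\geqslant\ell(R)$. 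The introduction is explicit that the older Nazarov--Treil--Volberg good/bad splitting would cost an extra $[w]_{A_2}$ factor; your version is a variant of that older device, not of Hyt\"onen's.

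A second, smaller gap is your case (ii). When $R\cap Q=\emptyset$ (the paper's $\Sigma_{out}$), the pair $(Q,R)$ does not sit inside a common dyadic ancestor of controlled generation, so it is not yet a term of a dyadic shift. The paper inserts an \emph{additional} probabilistic step: it shows that with probability $\geqslant\tfrac12$ the cube $R$ lies inside $Q^{(s+s_0+10)}$ where $D(Q,R)\sim\delta^{-s}\ell(Q)$, and only after conditioning on this event can the pair be absorbed into a shift with ancestor $L=Q^{(s+s_0+10)}$. Your outline omits this common-ancestor argument; without it the ``outer'' terms cannot be organized into $\sha_{m,n}$'s at all.
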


\begin{zamech}
We note that existence of such $\mu$ on any GDMS was proved in ~\cite{KV}.
\end{zamech}
\subsection{Proof of the theorem}\label{decomp}

Take two step functions, $f$ and $g$. We first fix an $N$-grid $G_N$ in $X$, and ``cubes'' on level $N$, such that $f$ and $g$ are constants on every such cube. Then we start our randomization process.

As we mentioned, this process consists of finitely many steps, so all probabilistic terminology becomes trivial: we have a finite probability space.

Starting from $G_N$, we go ``up'' and on each level get dyadic cubes (random Christ's cubes). They have the usual structure of being either disjoint or one containing the other. For each dyadic cube $Q$ we have several dyadic sons, they are denoted by $s_i(Q)$, $i=1,\dots, M(Q)\le M$.  The number $M$ here is universal and depends only on geometric doubling constants of the space $X$. 

\begin{defin} By $\cE_k$ we denote set of all dyadic ``cubes'' of generation $k$. We call $Q_k^i\subset Q_{k-1}^j$, $Q_k^i\in \cE_k$  {\bf sons} of $Q_{k-1}^j$. \end{defin}

 With every cube $Q=Q_{x_{k}}$ we associate {\bf Haar functions} $h^j_{Q}$, $j=1,\dots, M-1$, with following properties:
\begin{enumerate}
\item $h^j_Q$ is supported on $Q$;
\item $h^j_Q$ takes constant values on each ``son'' of $Q$;
\item For any two cubes $Q$ and $R$, we have $(h^j_Q, h^i_R)=0$, and $(h^j_Q, 1)=0$;
\item $\|h^j_Q\|_{\infty}\leqslant \frac{C}{\sqrt{\mu(Q)}}$.
\end{enumerate}
We notice that the last property implies that $\|h^j_{Q}\|_{2}\leqslant C$.

We use angular brackets to denote the average: $\langle f \rangle_{Q, \mu} := \frac1{\mu(Q)}\int_Q f\, d\mu$. When we average over the whole space $X$, we drop the index and write $\langle f \rangle = \frac1{\mu(X)} \int_X fd\mu$.

\bigskip

Our main ``tool'' is going to be the famous ``dyadic shifts''. Precisely, we call by $\sha_{m,n}$ the operator given by the  kernel
$$
f\rightarrow \sum_{L\in \cD} \int_L a_L(x,y)f(y)dy\,,
$$
where
$$
a_L(x,y) =\sum_{\substack{ I\subset L, J\subset L\\ g(I)= g(L)+m, \, g(J)= g(L)+n}}c_{L,I,J} h_J^j(x)h_I^i(y)\,,
$$
where $h_I^i, h_J^j$ are Haar functions normalized in $L^2(d\mu)$ and satisfying (iv), and $|c_{L,I,J}|\le \frac{\sqrt{\mu(I)}\sqrt{\mu(J)}}{\mu(L)}$.
Often we will skip superscripts $i, j$.

\bigskip

Our next aim is to decompose the bilinear form of the operator $T$ into bilinear forms of dyadic shifts, which are estimated in the Section \ref{shiftest}. The rest will be the so-called ``paraproducts'', estimated in the Section \ref{para}.

Functions $\{\chi_X\}\cup\{h^j_Q\}$ form an orthogonal basis in the space $L^2(X,\mu)$. Therefore, we can write
$$
f=\ave{f}\chi_X + \sli_Q \sli_j(f, h^j_Q)h^j_Q, \;\; \; \; \; g=\ave{g}\chi_X+\sli_R\sli_i (g, h^i_R)h^i_R.
$$

First, we state and proof the theorem, that says that essential part of bilinear form of $T$ can be expressed in terms of pair of cubes, where the smallest one is good. We follow the idea of Hyt\"onen \cite{Hyt}. In fact, the work \cite{Hyt} improved on ``good-bad" decomposition of \cite{NTV}, \cite{NTV2}, \cite{NTV3} by replacing inequalities by an equality.

\begin{theorem}\label{rand1}
Let $T$ be any linear operator. Then the following equality holds:
$$
\pi_{good}\E \sli_{\stackrel{Q,R,i,j}{\ell(Q)\geqslant \ell(R)}}(Th_Q^j, h_R^i)(f, h_Q^j)(g, h_R^i) = \E \sli_{\stackrel{Q,R,i,j}{\ell(Q)\geqslant \ell(R), \; R \; \mbox{is good}}}(Th_Q^j, h_R^i)(f, h_Q^j)(g, h_R^i).
$$
The same is true if we replace $\geqslant$ by $>$.
\end{theorem}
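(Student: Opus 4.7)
The plan is to rewrite the desired equality as
\[
\E \sum_{\substack{Q,R \in L\\ \ell(Q) \ge \ell(R)}} X(Q,R)\,\bigl[\chi_{R \text{ good}} - \pi_{good}\bigr] = 0,
\]
where $X(Q,R) := (Th_Q^j, h_R^i)(f, h_Q^j)(g, h_R^i)$ and $L$ is the random dyadic lattice constructed in Section~\ref{randlatt}. Since $X$ is compact and all the probability spaces in that construction are finite, I may interchange the expectation with the sum, so that the identity reduces to the pairwise statement
\[
\mathbb{P}(Q,R \in L,\ R \text{ good}) \;=\; \pi_{good}\cdot \mathbb{P}(Q,R \in L)
\]
for every fixed pair of potential cubes $(Q,R)$ with $\ell(Q) \ge \ell(R)$. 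Equivalently, I need to show that conditional on the pair being in $L$, the event that $R$ is good has probability exactly $\pi_{good}$.

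The key tool is the auxiliary variable $\xi_R$ introduced in Section~5. Recall that $\xi_R$ is uniform on $[0,1]$, independent of the lattice-construction randomness and of all other $\xi_{R'}$, and $R$ is declared (really) good iff $R$ is geometrically good \emph{and} $\xi_R \le \pi_{good}/p_R$, where $p_R \ge a$ depends only on the scale of $R$. Using the independence of $\xi_R$ from $L$, a direct computation yields
\[
\mathbb{P}(Q,R \in L,\ R \text{ good}) \;=\; \frac{\pi_{good}}{p_R}\,\mathbb{P}(Q,R \in L,\ R \text{ geom.\ good}),
\]
so the pairwise identity collapses to the conditional independence
\[
\mathbb{P}(R \text{ geom.\ good} \mid Q,R \in L) \;=\; p_R,
\]
i.e., conditional on $R \in L$, geometric goodness of $R$ is independent of the event $\{Q \in L\}$ whenever $\ell(Q)\ge\ell(R)$.

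I would verify this last step via the bottom-up construction of Section~\ref{randlatt}: the random choices at distinct scales are mutually independent, and by the definition in Section~4 the geometric goodness of $R$ is controlled by positions of ancestors of $R$ at scales coarser than $\delta^{-r}\ell(R)$, while $\{Q \in L\}$ is determined by the choice at scale $\ell(Q)\ge\ell(R)$. The hard part will be the intermediate-scale bookkeeping: the chain $x_{k_R}\prec x_{k_R-1}\prec\dots$ does pass through the scales in $[\ell(R),\delta^{-r}\ell(R)]$, but the only information it transmits there is the identity of ancestors, not their proximity to $R$'s far-coarser ancestor boundaries, so the two events genuinely factor once one traces through the product structure of the scale-wise randomness. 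The statement with strict inequality $\ell(Q) > \ell(R)$ is handled identically, since the size condition only restricts the index set.
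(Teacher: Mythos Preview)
Your reduction to the pairwise identity
\[
\mathbb{P}(R \text{ geom.\ good} \mid Q,R \in L) = p_R
\qquad\text{for every }Q\text{ with }\ell(Q)\ge \ell(R)
\]
is where the argument breaks. The direction of independence is reversed. Geometric goodness of $R$ is governed by the positions of cubes at scales \emph{coarser} than $\delta^{-r}\ell(R)$, i.e.\ by the random choices of grids $G_{k_R-r},G_{k_R-r-1},\dots,G_0$. When $\ell(Q)\ge\ell(R)$ the event $\{Q\in L\}$ is decided by grids $G_{k_R},G_{k_R-1},\dots,G_{k_Q}$, and for $\ell(Q)\ge\delta^{-r}\ell(R)$ this range overlaps exactly the range that controls goodness of $R$. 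In that regime the conditional probability of $R$ being geometrically good, given $Q,R\in L$, genuinely depends on $Q$; your ``intermediate-scale bookkeeping'' remark covers only $\ell(R)\le\ell(Q)<\delta^{-r}\ell(R)$ and says nothing about the large-$Q$ case where independence simply fails.

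The paper's proof avoids this by running the argument in the opposite direction. First one sums over \emph{all} $Q$ so that the inner sum collapses to $(T(f-\langle f\rangle\chi_X),g_{\text{good}})$, where $Q$ has disappeared and only $\E g_{\text{good}}=\pi_{good}\,g$ is needed. Then one uses the conditional independence that \emph{does} hold, namely when $\ell(Q)<\ell(R)$: there $\{Q\in L\}$ is fixed by the fine grids $G_N,\dots,G_{k_Q}$ with $k_Q>k_R$, while goodness of $R$ is fresh randomness at coarser levels, so
\[
\pi_{good}\,\E\!\!\sum_{\ell(Q)<\ell(R)}X(Q,R)=\E\!\!\sum_{\ell(Q)<\ell(R)}X(Q,R)\mathbf{1}_{R\text{ good}}.
\]
Subtracting this from the total-sum identity yields the case $\ell(Q)\ge\ell(R)$ without ever claiming independence there. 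Your proposal can be repaired by adopting this subtraction trick; the pairwise route as written does not go through.
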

\begin{proof}
We denote
$$
\sigma_1(T) = \sli_{\ell(Q)\geqslant \ell(R)}(Th^j_Q, h^i_R)(f, h^j_Q)(g, h^i_R).
$$
$$
\overline{\sigma_1(T)} = \sli_{\substack{\ell(Q)\geqslant \ell(R)\\ R \; \mbox{is good}}}(Th^j_Q, h^i_R)(f, h^j_Q)(g, h^i_R).
$$
We would like to get a relationship between $\E \sigma_1(T)$ and $\E\overline{\sigma_1(T)}$.

We fix $R$ and write (using $g_{good}:= \sli_{R \; \mbox{is good}} (g,h^i_R)h^i_R$)
$$
\sli_{Q} \sli_{R \; \mbox{is good}}(Th^j_{Q}, h^i_R)(f, h^j_Q)(g,h^i_R) = \left(T(f-\ave{f}\chi_X),\sli_{R \; \mbox{is good}} (g,h^i_R)h^i_R\right) =\left(T(f-\ave{f}\chi_X), g_{good}\right)\,.
$$
Taking expectations, we obtain

\begin{multline}
\label{allRgood}
\E \sli_{Q,R} (Th^j_{Q}, h^i_R)(f, h^j_Q)(g,h^i_R)\mathbf{1}_{R \,\mbox{is good}} =\\
 \E (T(f-\ave{f}\chi_X), g_{good})= (T(f-\ave{f}\chi_X),\E\, g_{good})=\\
 \pi_{good}(T(f-\ave{f}\chi_X),g)=\pi_{good}\E \sli_{Q,R} (Th^j_{Q}, h^i_R)(f, h^j_Q)(g,h^i_R).
\end{multline}
Next, suppose $\ell(Q)<\ell(R)$. Then goodness of $R$ does not depend on $Q$, and so
$$
 \pi_{good}  (Th^j_Q, h^i_R)(f, h^j_Q)(g, h^i_R)=\E \left((Th^j_Q, h^i_R)(f, h^j_Q)(g, h^i_R)\mathbf{1}_{R \,\mbox{is good}}|Q,R\right) \,.
$$
Let us explain this equality. The right hand side is conditioned: meaning that the left hand side involves the fraction of the number of all lattices containing $Q, R$ in this lattice and such that $R$ (the larger one) is good to the number of lattices containing $Q, R$ in it. This fraction is exactly $\pi_{good}$. Now we fix a pair of $Q, R$, $\ell(Q)<\ell(R)$, and multiply both sides by the probability that this pair is in the same dyadic lattice from our family. This probability is just the ratio of the number of dyadic lattices in our family containing elements $Q$ and $R$  to the number of all dyadic lattices in our family. After multiplication by this ratio and the summation of all terms with $\ell(Q)<\ell(R)$ we get
finally,
\begin{equation}
\label{QmRgood}
 \pi_{good}\E \sli_{\ell(Q)<\ell(R)} (Th^j_Q, h^i_R)(f, h^j_Q)(g, h^i_R)=\E \sli_{\ell(Q)<\ell(R)} (Th^j_Q, h^i_R)(f, h^j_Q)(g, h^i_R)\mathbf{1}_{R \,\mbox{is good}} \,.
 \end{equation}
 Now we use first \eqref{allRgood} and then \eqref{QmRgood}:
\begin{multline}
\pi_{good}\E \sli_{Q,R} (Th^j_{Q}, h^i_R)(f, h^j_Q)(g,h^i_R) = \E \sli_{Q,R} (Th^j_{Q}, h^i_R)(f, h^j_Q)(g,h^i_R)\mathbf{1}_{R \,\mbox{is good}} = \\=\E \sli_{\ell(Q)<\ell(R)}(Th^j_{Q}, h^i_R)(f, h^j_Q)(g,h^i_R)\mathbf{1}_{R \,\mbox{is good}} +
 \E\sli_{\ell(Q)\geqslant\ell(R)}(Th^j_{Q}, h^i_R)(f, h^j_Q)(g,h^i_R)\mathbf{1}_{R \,\mbox{is good}} = \\=\pi_{good}\E \sli_{\ell(Q)<\ell(R)}(Th^j_{Q}, h^i_R)(f, h^j_Q)(g,h^i_R) +
\E\sli_{\ell(Q)\geqslant\ell(R), R\;\mbox{is good}}(Th^j_{Q}, h^i_R)(f, h^j_Q)(g,h^i_R),
\end{multline}
and therefore
\begin{equation}
\label{QbRgood}
\E\sli_{\ell(Q)\geqslant\ell(R), R\;\mbox{is good}}(Th^j_{Q}, h^i_R)(f, h^j_Q)(g,h^i_R) = \pi_{good}\E\sli_{\ell(Q)\geqslant\ell(R)}(Th^j_{Q}, h^i_R)(f, h^j_Q)(g,h^i_R).
\end{equation}
\end{proof}
This is the main trick. To have the whole sum expressed as the multiple of the sum, where the {\bf smaller} in size cube is good, is {\bf very useful} as we will see. It gives extra decay on matrix coefficients $(Th^j_{Q}, h^i_R)$ and allows us to represent our operator as ``convex combination of  dyadic shifts".
%

So, we have obtained that
$$
\E \sigma_1(T) = \pi_{good}^{-1} \cdot \E\overline{\sigma_1(T)}.
$$

Thus, to estimate $\E \sigma_1(T)$ it is enough to estimate $\E \overline{\sigma_1(T)}$.  Absolutely the same symmetrically holds for $\sigma_2(T)$.

\subsection{Paraproducts}
In this subsection we take care of the terms $\ave{f}\chi_X$ and $\ave{g}\chi_X$. These terms will lead to so called paraproducts. In fact, let us introduce three auxiliary operators:

\begin{align}
&\pi(f):=\pi_{T\chi_X}(f):=\sli_{Q, j}\av{f}{Q}(T\chi_X, h_Q^j)h_Q^j;\\
&\pi_*(f) := \sli_{Q, j}(f, h_Q^j)(T^*\chi_X, h_Q^j)\frac{\chi_Q}{\mu(Q)} = (\pi_{T^*\chi_X})^* (f);\\
&o(f):=\ave{f}\ave{T\chi_X}\chi_X.
\end{align}

Recall that $\ave{\varphi}$ denotes $\frac1{\mu (X)}\int_X\varphi\,d\mu$.
These operators depend on the dyadic grid we chose. We shall need the following technical lemma.
\begin{lemma}
$$
(\pi( f), g) = \ave{f} (T\chi_X, g-\ave{g}\chi_X) + \sli(\pi h_Q^j, h_R^i)(f, h_Q^j)(g, h_R^i),
$$
$$
(\pi_* (f), g) = \ave{g}(T^*\chi_X, f-\ave{f}\chi_X) + \sli (\pi_* h_Q^j, h_R^i)(f, h_Q^j)(g, h_R^i).
$$
\end{lemma}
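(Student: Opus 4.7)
\medskip

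\textbf{Proof plan.} The plan is to verify both identities by direct computation, using linearity of $\pi$ and the fact that $\{\mu(X)^{-1/2}\chi_X\}\cup\{h_Q^j\}$ is an orthonormal basis of $L^2(\mu)$, so that every test function splits as $f=\ave{f}\chi_X+\tilde f$ with $\tilde f:=\sum_{Q,j}(f,h_Q^j)h_Q^j$, and similarly for $g$. Since $f$ and $g$ are step functions we only deal with finite sums, so no convergence issues arise.

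For the first identity, I note that $\av{\chi_X}{Q}=1$ for every $Q$, so the definition of $\pi$ together with linearity of averaging gives $\pi(f)=\ave{f}\pi(\chi_X)+\pi(\tilde f)$. Reading $\pi(\chi_X)=\sum_{Q,j}(T\chi_X,h_Q^j)h_Q^j$ as the projection of $T\chi_X$ onto the span of the Haar functions, I identify $\pi(\chi_X)=T\chi_X-\ave{T\chi_X}\chi_X$. A one-line manipulation using $(\chi_X,g)=\mu(X)\ave{g}$ and $(T\chi_X,\chi_X)=\mu(X)\ave{T\chi_X}$ then shows
\[
\ave{f}(\pi(\chi_X),g)=\ave{f}(T\chi_X,g-\ave{g}\chi_X),
\]
which is exactly the first summand on the right-hand side of the claimed identity.

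For the remaining piece, I pull the Haar coefficients of $\tilde f$ outside the linear operator $\pi$, obtaining $\pi(\tilde f)=\sum_{Q,j}(f,h_Q^j)\pi(h_Q^j)$, and then expand $g$ in the basis. Since $\pi(h_Q^j)$ is by construction a linear combination of Haar functions, it has zero $\mu$-mean, so the constant summand $\ave{g}\chi_X$ of $g$ contributes nothing to $(\pi(\tilde f),g)$, leaving precisely $\sum(\pi h_Q^j,h_R^i)(f,h_Q^j)(g,h_R^i)$. Adding the two pieces gives the first identity.

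The second identity reduces to the first by duality. From the definition $\pi_*=(\pi_{T^*\chi_X})^*$ I rewrite $(\pi_* f,g)=(f,\pi_{T^*\chi_X}g)$, apply the already-proved first identity to the operator $\pi_{T^*\chi_X}$ (with $T$ replaced by $T^*$ and the roles of $f,g$ interchanged), and convert matrix entries via $(\pi_{T^*\chi_X}h_R^i,h_Q^j)=(\pi_* h_Q^j,h_R^i)$. The calculation is entirely routine; the only point requiring any care is the bookkeeping of the constant component $\ave{f}\chi_X$ (or $\ave{g}\chi_X$), which is the sole source of the ``boundary'' inner product $\ave{f}(T\chi_X,g-\ave{g}\chi_X)$ on the right-hand side.
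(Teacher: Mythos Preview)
Your proof is correct and follows essentially the same approach as the paper's: both split $f=\ave{f}\chi_X+\tilde f$, identify $\pi(\chi_X)=T\chi_X-\ave{T\chi_X}\chi_X$ as the Haar projection of $T\chi_X$, use that $\pi(h_Q^j)$ has zero $\mu$-mean to kill the constant piece of $g$, and then deduce the second identity from the first via $\pi_*=(\pi_{T^*\chi_X})^*$. The only cosmetic difference is that the paper invokes the orthogonality of $\pi(f)$ to $\chi_X$ to replace $g$ by $g-\ave{g}\chi_X$ at the outset, whereas you track the constant components term by term; the computations are equivalent.
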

\begin{proof}
The second equality follows from the first one and the definition of $\pi_*$. We prove the first equality. We will not write superscripts $i$ and $j$ in Haar functions.

We write
$$
\pi (f) = \ave{f}\pi(\chi_X) + \sli (f, h_Q^i)\pi(h_Q^i).
$$
Notice that
$$
\pi(\chi_X) = \sli(T\chi_X, h_Q^i)h_Q^i = T\chi_X - \ave{T\chi_X},
$$
and that $\pi(f)$ is orthogonal to $\chi_X$. Thus,
\begin{multline*}
(\pi (f), g) = (\pi (f), g-\ave{g}\chi_X) = \ave{f}(\pi(\chi_X), \sli (g, h_R^j)h_R^j) + \sli (\pi h_Q^i, h_R^j)(f, h_Q^i)(g, h_R^j) = \\ =\ave{f}(T\chi_X, g-\ave{g}\chi_X) + \sli (\pi h_Q^i, h_R^j)(f, h_Q^i)(g, h_R^j),
\end{multline*}
as desired.
The last equality is true because $\ave{T\chi_X}$ is orthogonal to $g-\ave{g}\chi_X$.
\end{proof}

Notice that $\pi, \pi^*$ depend on the random dyadic grid. We introduce a random operator
$$
\tilde{T}=Tf - \pi(f)-\pi_*(f).
$$
Now we state the following very useful lemma.
\begin{lemma}
$$
(Tf, g) = \pi_{good}^{-1}\E \sli_{\stackrel{Q, R}{\mbox{smaller is good}}}(\tilde{T}h_Q^i, h_R^j)(f,h_Q^i)(g, h_R^j) + \E (\pi ( f), g) + \E (\pi_* (f), g) + \ave{f}\ave{g}(T\chi_X, \chi_X).
$$
\end{lemma}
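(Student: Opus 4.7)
The plan is to decompose $(Tf,g)$ in the random Haar basis, extract the paraproduct contributions corresponding to the averages $\ave f$ and $\ave g$, and then invoke the good-cube reduction of Theorem \ref{rand1} for the modified operator $\tilde T=T-\pi-\pi_*$.

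First, since $(Tf,g)$ is lattice-independent, I would write $(Tf,g)=\E(Tf,g)$ and, inside each lattice realization, use the Haar expansions of $f$ and $g$ to obtain
\[
(Tf,g)=\ave f\ave g(T\chi_X,\chi_X)+\ave f(T\chi_X,g-\ave g\chi_X)+\ave g(T(f-\ave f\chi_X),\chi_X)+\E\sli_{Q,R,i,j}(Th_Q^j,h_R^i)(f,h_Q^j)(g,h_R^i),
\]
the first three ``boundary'' terms carrying no $\E$ since they do not depend on the choice of lattice.

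Next, the preceding lemma lets me solve for the two non-trivial boundary terms:
\[
\ave f(T\chi_X,g-\ave g\chi_X)=\E(\pi f,g)-\E\sli(\pi h_Q^j,h_R^i)(f,h_Q^j)(g,h_R^i),
\]
\[
\ave g(T(f-\ave f\chi_X),\chi_X)=\E(\pi_* f,g)-\E\sli(\pi_* h_Q^j,h_R^i)(f,h_Q^j)(g,h_R^i),
\]
using that $(T^*\chi_X,f-\ave f\chi_X)=(T(f-\ave f\chi_X),\chi_X)$ is itself lattice-independent. Substituting these expressions back and collecting the three Haar--Haar sums into a single one for $\tilde T$ gives
\[
(Tf,g)=\ave f\ave g(T\chi_X,\chi_X)+\E(\pi f,g)+\E(\pi_* f,g)+\E\sli_{Q,R,i,j}(\tilde T h_Q^j,h_R^i)(f,h_Q^j)(g,h_R^i).
\]

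It remains to convert the unrestricted Haar--Haar sum into $\pi_{good}^{-1}$ times the sum over pairs whose smaller cube is good. This is Theorem \ref{rand1} applied to $\tilde T$ in place of $T$, and here is the main obstacle: Theorem \ref{rand1} is stated for a deterministic operator, whereas $\tilde T$ depends on the random lattice $\omega$. I would re-run the proof of Theorem \ref{rand1} for $\tilde T$, exploiting the fact that the ``really good'' randomization $\xi_R$ from the preceding section is independent of $\omega$; integrating in $\xi$ first yields $\E g_{good}=\pi_{good}(g-\ave g\chi_X)$, so that
\[
\E(\tilde T(f-\ave f\chi_X),g_{good})=\pi_{good}\,\E(\tilde T(f-\ave f\chi_X),g-\ave g\chi_X),
\]
which is exactly the input the Theorem \ref{rand1} argument required from $T$. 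Splitting the Haar--Haar sum into $\ell(Q)\ge\ell(R)$ and $\ell(Q)<\ell(R)$ and applying this identity (to $R$-goodness in the first case and $Q$-goodness in the second) produces the $\pi_{good}^{-1}$ factor together with the ``smaller is good'' restriction, completing the proof.
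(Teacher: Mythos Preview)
Your reduction up to the identity
\[
(Tf,g)=\ave f\ave g(T\chi_X,\chi_X)+\E(\pi f,g)+\E(\pi_* f,g)+\E\sum_{Q,R}(\tilde T h_Q^j,h_R^i)(f,h_Q^j)(g,h_R^i)
\]
is correct and matches the paper's setup. The gap is in the last step, where you try to run the good-cube reduction for the \emph{random} operator $\tilde T$. Your proposed fix does not work: integrating in the auxiliary variables $\xi$ first gives
\[
\E_{\xi}\,g_{good}=\sum_{R,i}(g,h_R^i)h_R^i\cdot \mathbf{1}_{R\ \text{geometrically good}}\cdot\frac{a}{p_R},
\]
which is still $\omega$-dependent (it vanishes on the geometrically bad cubes and carries the varying weights $a/p_R$ on the good ones). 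The ``really good'' device only equalizes the \emph{marginal} probability of goodness; it does not decouple goodness from the lattice. Hence you cannot pull $\tilde T(f-\ave f\chi_X)$ through the expectation, and the displayed identity $\E(\tilde T(f-\ave f\chi_X),g_{good})=\pi_{good}\,\E(\tilde T(f-\ave f\chi_X),g-\ave g\chi_X)$ is unjustified.

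The paper avoids this by reversing the order: it applies Theorem~\ref{rand1} to the deterministic $T$, obtaining $\pi_{good}^{-1}\E\sum_{\text{smaller good}}(Th_Q,h_R)(\cdots)$, and only \emph{then} writes $T=\tilde T+\pi+\pi_*$ inside the restricted sum. The paraproduct pieces are handled separately by a direct computation: since $(\pi h_Q,h_R)=\langle h_Q\rangle_R(T\chi_X,h_R)$ is nonzero only for $R\subsetneq Q$, the ``smaller is good'' condition is simply ``$R$ is good'', and the inner $Q$-sum telescopes to $\langle f\rangle_R-\ave f$, leaving a single sum over $R$ in which the goodness indicator factors out honestly. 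This yields $\pi_{good}^{-1}\E\sum_{\text{smaller good}}(\pi h_Q,h_R)(\cdots)=\E\sum(\pi h_Q,h_R)(\cdots)$ (and symmetrically for $\pi_*$), after which your preceding lemma and cancellation of the boundary terms finish the proof. If you want to salvage your order of operations, you would in effect have to reprove exactly this structural fact about $\pi$ and $\pi_*$; the $\xi$-independence alone is not enough.
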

\begin{proof}
First, we write
$$
(Tf, g) = \sli (Th_Q^i, h_R^j)(f, h_Q^i)(g, h_R^j) + \ave{f}(T\chi_X, g) + \ave{g}(T^*\chi_X, f-\ave{f}\chi_{X}).
$$
We take expectations now. Notice that only the first term in the right-hand side depends on a dyadic grid. Therefore,
$$
(Tf, g)=\E \sli (Th_Q^i, h_R^j)(f, h_Q^i)(g, h_R^j) + \ave{f}(T\chi_X, g) + \ave{g}(T^*\chi_X, f-\ave{f}\chi_X).
$$
We focus on the first term. By the Theorem \ref{rand1}, we know that
\begin{multline}
\E \sli (Th_Q^i, h_R^j)(f, h_Q^i)(g, h_R^j) = \pi_{good}^{-1}\E \sli_{\mbox{smaller is good}} (Th_Q^i, h_R^j)(f, h_Q^i)(g, h_R^j) = \\ =\pi_{good}^{-1}\E \sli_{\mbox{smaller is good}} (\tilde{T}h_Q^i, h_R^j)(f, h_Q^i)(g, h_R^j) + \\+\pi_{good}^{-1}\E \sli_{\mbox{smaller is good}} (\pi h_Q^i, h_R^j)(f, h_Q^i)(g, h_R^j)+\pi_{good}^{-1}\E \sli_{\mbox{smaller is good}} (\pi_* h_Q^i, h_R^j)(f, h_Q^i)(g, h_R^j).
\end{multline}

The first term is one of those that we want to get in the right-hand side.

On the other hand, we want to get a result for paraproducts, similar to the Theorem \ref{rand1}. Indeed, it is clear that
$$
(\pi h_Q^i, h_R^j) = \av{h_Q^i}{R} (T\chi_X, h_R^j),
$$
which is non-zero only if $R\subset Q$, and $R\not = Q$. So,
\begin{multline}
\E \sli_{\mbox{smaller is good}} (\pi h_Q^i, h_R^j)(f, h_Q^i)(g, h_R^j) = \E \sli_{R\subset Q} \av{h_Q^i}{R}(T\chi_X, h_R^j)(f, h_Q^i)(g, h_R^j){\bf 1}_{R \; \mbox{is good}} = \\
= \E \sli_{R} (T\chi_X, h_R^j)(g, h_R^j){\bf 1}_{R \; \mbox{is good}} \sli_{Q\,: R\subsetneq Q} (f, h_Q^i)\av{h_Q^i}{R}.
\end{multline}
We now see that since $f=\ave{f}\chi_X+\sli_Q (f, h_Q^i)h_Q^i$, we have
$$
\av{f}{R}-\ave{f} = (f, \mu(R)^{-1}\chi_R) - \ave{f} =\sli_{Q\,: R\subsetneq Q} (f, h_Q^i)\av{h_Q^i}{R}= \sli_Q (f, h_Q^i) \av{h_Q^i}{R}.
$$
Therefore,
\begin{multline}
\E \sli_{R} (T\chi_X, h_R^j)(g, h_R^j){\bf 1}_{R \; \mbox{is good}} \sli_Q (f, h_Q^i)\av{h_Q^i}{R} = \E \sli_R (T\chi_X, h_R^j)(g, h_R^j){\bf 1}_{R \; \mbox{is good}} (\av{f}{R}-\ave{f}).
\end{multline}
Now it is clear that we can take the expectation inside (we have no $Q$ anymore, which was preventing us from doing that), and so we get
$$
\E \sli_{\mbox{smaller is good}} (\pi h_Q^i, h_R^j)(f, h_Q^i)(g, h_R^j) = \pi_{good}\E \sli_R (T\chi_X, h_R^j)(g, h_R^j)(\av{f}{R}-\ave{f}).
$$
Making all above steps backwards, we get
$$
\E \sli_{\mbox{smaller is good}} (\pi h_Q^i, h_R^j)(f, h_Q^i)(g, h_R^j) = \pi_{good}\E \sli (\pi h_Q^i, h_R^j)(f, h_Q^j)(g, h_R^j)
$$
 Therefore,
\begin{multline}
\pi_{good}^{-1}\E \sli_{\mbox{smaller is good}} (\pi h_Q^i, h_R^j)(f, h_Q^i)(g, h_R^j)+\pi_{good}^{-1}\E \sli_{\mbox{smaller is good}} (\pi_* h_Q^i, h_R^j)(f, h_Q^i)(g, h_R^j) = \\= \E \sli (\pi h_Q^i, h_R^j)(f, h_Q^i)(g, h_R^j)+ \E \sli (\pi_* h_Q^i, h_R^j)(f, h_Q^i)(g, h_R^j)=\\
= \E (\pi (f), g) + \E(\pi_* (f), g) - \E[ \ave{f}(T\chi_X, g-\ave{g}\chi_X)] - \E[ \ave{g}(T^*\chi_X, f-\ave{f}\chi_X)].
\end{multline}
We now use that last two terms do not depend on the dyadic grid, and so we drop expectations. Finally,
\begin{multline}
(Tf, g) = \E \sli_{\text{smaller is good}} (\tilde{T}h_Q^i, h_R^j)(f, h_Q^i)(g, h_R^j) + \E (\pi (f), g) + \E (\pi_* (f), g)  -\\- \ave{f}(T\chi_X, g-\ave{g}\chi_X) - \ave{g}(T^* \chi_X, f-\ave{f}\chi_X)+\ave{f}(T\chi_X, g) + \ave{g}(T^*\chi_X, f-\ave{f}\chi_X) = \\ = \E \sli_{\text{smaller is good}} (\tilde{T}h_Q^i, h_R^j)(f, h_Q^i)(g, h_R^j) + \E (\pi (f), g) + \E (\pi_* (f), g) + \ave{f}\ave{g}(T\chi_X, \chi_X).
\end{multline}
This is what we want to prove.
\end{proof}
The following lemma, which will be proved later, takes care of paraproducts.
\begin{lemma}\label{parapr}
The operators $\pi$, $\pi_*$ are bounded on $L^2(X, wd\mu)$, and
$$
\|\pi\|_{2, w}\leqslant C \cdot [w]_{2, \mu}.
$$
The same is true for $\pi_*$.
\end{lemma}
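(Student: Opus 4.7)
The plan is to reduce the paraproduct bound to a weighted Carleson embedding theorem via duality.

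Since $T\colon L^2(\mu)\to L^2(\mu)$, the function $b:=T\chi_X$ belongs to dyadic $\mathrm{BMO}(\mu)$; equivalently, the sequence $\beta_Q:=\sum_j|(b,h_Q^j)|^2$ is a $\mu$-Carleson sequence,
$$\sum_{Q\subset R}\beta_Q\leq C\|T\|^2\mu(R),$$
for every dyadic cube $R$. This Carleson property is the only way the Calder\'on--Zygmund nature of $T$ enters the argument for Lemma \ref{parapr}; everything afterwards is purely dyadic.

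By duality it suffices to estimate $|(\pi f,g)_\mu|$ for $g\in L^2(w^{-1}d\mu)$. Expanding and applying Cauchy--Schwarz with the splitting weights $\langle w^{-1}\rangle_{Q,\mu}$ and $\langle w^{-1}\rangle_{Q,\mu}^{-1}$, we obtain
$$|(\pi f,g)_\mu|^2 \leq \Big(\sum_Q\av{f}{Q,\mu}^{\,2}\,\beta_Q\,\langle w^{-1}\rangle_{Q,\mu}\Big)\Big(\sum_{Q,j}\frac{|(g,h_Q^j)_\mu|^2}{\langle w^{-1}\rangle_{Q,\mu}}\Big).$$
The second factor is controlled by $C[w]_{2,\mu}\|g\|^2_{L^2(w^{-1}d\mu)}$ via a standard disbalanced-Haar / weighted Carleson embedding argument together with the $A_2$ condition $\langle w\rangle_{Q,\mu}\langle w^{-1}\rangle_{Q,\mu}\leq[w]_{2,\mu}$. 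For the first factor, I apply the pointwise Cauchy--Schwarz estimate $\av{f}{Q,\mu}^{\,2}\leq\av{f^2w}{Q,\mu}\langle w^{-1}\rangle_{Q,\mu}$, reducing matters to the weighted Carleson embedding
$$\sum_Q\av{f^2w}{Q,\mu}\,\beta_Q\,\langle w^{-1}\rangle_{Q,\mu}^{\,2}\leq C\,[w]_{2,\mu}\,\|\beta\|_{\mathrm{Car}}\,\|f\|^2_{L^2(wd\mu)}.$$
The estimate for $\pi_*$ follows symmetrically by applying the same reasoning to $T^*\chi_X$, which is also in $\mathrm{BMO}(\mu)$ since $T^*$ is also a Calder\'on--Zygmund operator bounded on $L^2(\mu)$.

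The main technical obstacle is the weighted Carleson embedding with sharp linear dependence on $[w]_{2,\mu}$. I expect this to be proved by the Bellman function method of Nazarov--Treil--Volberg in Beznosova's refinement: one constructs a function of the averages $(\av{f}{Q,\mu},\langle w\rangle_{Q,\mu},\langle w^{-1}\rangle_{Q,\mu})$ whose concavity with defect encodes the $A_2$ control, and runs a stopping-time induction down the nested Christ cubes. The Bellman argument is essentially insensitive to the precise geometry of the random cubes $Q_{y_k}$, since it only uses their nested structure together with the doubling of $\mu$; the bounded number of children per cube (controlled by the geometric doubling constant of $X$) is harmless, and in particular the extra sum over $j$ in the Haar coefficients can be absorbed into the Carleson constant $\|\beta\|_{\mathrm{Car}}$.
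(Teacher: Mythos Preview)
Your overall plan---reduce to the Carleson property of $\beta_Q=\sum_j|(T\chi_X,h_Q^j)|^2$ and then invoke a Beznosova-type weighted embedding proved by Bellman function---is the same route the paper takes. But the concrete Cauchy--Schwarz split you write down is wrong, not merely unproved. Your claimed bound on the second factor,
\[
\sum_{Q,j}\frac{|(g,h_Q^j)_\mu|^2}{\langle w^{-1}\rangle_{Q,\mu}}\;\le\;C\,[w]_{2,\mu}\,\|g\|^2_{L^2(w^{-1}d\mu)},
\]
already fails for constant weights: if $w\equiv M$ then $[w]_{2,\mu}=1$, the left side equals $M\sum_{Q,j}|(g,h_Q^j)_\mu|^2=M\,\|g-\langle g\rangle\|_{L^2(\mu)}^2=M^2\|g-\langle g\rangle\|_{L^2(w^{-1}d\mu)}^2$, while the right side is $C\|g\|_{L^2(w^{-1}d\mu)}^2$. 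Similarly, your first-factor embedding $\sum_Q\langle F\rangle_{Q,\mu}\,\beta_Q\,\langle w^{-1}\rangle_{Q,\mu}^{2}\le C[w]_{2,\mu}\|\beta\|_{\mathrm{Car}}\int F\,d\mu$ is equivalent to $\{\beta_Q\langle\sigma\rangle_Q^2\}$ being $\mu$-Carleson with constant $\lesssim[w]_{2,\mu}\|\beta\|_{\mathrm{Car}}$, which fails for power weights $w(x)=|x|^a$, $a\in[1/2,1)$, on dyadic intervals containing the origin. So neither factor can be controlled as you state; the product happens to work for constant $w$ only because the errors cancel.

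The paper's argument (following Beznosova) does \emph{not} perform such a direct split. Instead one writes $h_Q^j=\alpha_Q^j h_Q^{\sigma,j}+\beta_Q^j\chi_Q$ with the disbalanced Haar functions, so that the pairing $(g,h_Q^j)_\mu$ produces a piece in $L^2(\sigma d\mu)$ and a piece carrying $\Delta_Q\sigma/\langle\sigma\rangle_Q$; the resulting sums with $\Delta_Q w,\Delta_Q\sigma$ are controlled by Bellman-function Carleson sequences (cf.\ Lemma~\ref{uval}), and the single place where the Carleson sequence $\{b_R^2\}$ and the weight interact is the $A_\infty$-type embedding
\[
\sum_{R\subset Q}\langle w\rangle_{\mu,R}\,b_R^2\;\le\;[w]_{A_\infty}\,\|b\|_C\,\int_Q w\,d\mu,
\]
which the paper isolates and proves. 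Your proposal needs to replace the naive split by this disbalanced-Haar decomposition; once that is done, the Bellman-function machinery you anticipate does indeed carry over to the metric setting exactly as you describe in your last paragraph.
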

We postpone the proof of this lemma.
We also notice that the operator
$$
o(f)=\ave{f}\ave{T\chi_X}\chi_X
$$
is clearly bounded with desired constant. In fact, as $T$ is bounded in the unweighted $L^2$, we have $\ave{T\chi_X}^2\le \|T\|^2_{L^2}=:C_0$
$$
\|o(f)\|^2_{2, w} = \ave{f}^2\ave{T\chi_X}^2 w(X) \leqslant C_0 \ave{f^2 w} \ave{w^{-1}} w(X) \leqslant C_0 [w]_2 \|f\|^2_{2, w}.
$$
We, therefore, should take care only of the first term, with $\tilde{T}$. We now erase the tilde, and write $T$ instead of $\tilde{T}$. Even though $T$ is not a Calderon-Zygmund operator anymore, all further estimates are true for $T$ (i.e., for a CZO minus paraproducts), see, for example, \cite{HM} or \cite{HPTV}.

\subsection{Estimates of $\sigma_1$}
Our next step is to decompose $\sigma_1$ into random dyadic shifts. 
We write
\begin{multline}
\overline{\sigma_1(T)}=\sli_{\substack{\ell(Q)\geqslant \ell(R)\\ R \; \mbox{is good}}}(Th^j_Q, h^i_R)(f, h^j_Q)(g, h^i_R) = \\=
\E\sli_{\substack{\ell(Q)\geqslant\delta^{-r_0}\ell(R),\\R\subset Q, \\R\;\mbox{is good}}}(Th^j_{Q}, h^i_R)(f, h^j_Q)(g,h^i_R) + \\+
\E\sli_{\substack{\ell(R)\leqslant\ell(Q)<\delta^{-r_0}\ell(R),\\ R\subset Q,\\R\;\mbox{is good}}}(Th^j_{Q}, h^i_R)(f, h^j_Q)(g,h^i_R)+\\+
\E\sli_{\substack{\ell(R)\leqslant\ell(Q),\\ R\cap Q=\emptyset, \\R\;\mbox{is good}}}(Th^j_{Q}, h^i_R)(f, h^j_Q)(g,h^i_R).
\end{multline}

Essentially, we will prove that the norm of every expectation is bounded by 
$$
C(T)\cdot \E \sli_n \delta^{-\ep(T)\cdot n} \|\mathbb{S}_n \|.
$$
First, we state our choice for $\gamma$, which we have seen in the definition of good cubes.
\begin{defin}
Put
$$
\gamma=\frac{\ep}{2\cdot(\ep + \log_2(C))},
$$
where $C$ is the doubling constant of the function $\lambda$.
\end{defin}
\begin{zamech}
We remark that this choice of $\gamma$ make Lemmata \ref{in} and \ref{out} true. 
\end{zamech}

The estimate of the second sum is easy. In fact,
$$
\E\sli_{\substack{\ell(R)\leqslant\ell(Q)<\delta^{-r_0}\ell(R),\\ R\subset Q, \\R\;\mbox{is good}}}(Th^j_{Q}, h^i_R)(f, h^j_Q)(g,h^i_R) \leqslant Cr_0\,[w]_2 \|f\|\|g\|.
$$
This is bounded by at most $r_0$ expressions for shifts of bounded complexity, so just see \cite{NV}. For more details, see \cite{HPTV}

We denote
\begin{align*}
&\Sigma_{in}=\E\sli_{\substack{\ell(Q)\geqslant\delta^{-r_0}\ell(R),\\R\subset Q, \\R\;\mbox{is good}}}(Th^j_{Q}, h^i_R)(f, h^j_Q)(g,h^i_R),\\
&\Sigma_{out}=\E\sli_{\substack{\ell(R)\leqslant\ell(Q),\\ R\cap Q=\emptyset, \\R\;\mbox{is good}}}(Th^j_{Q}, h^i_R)(f, h^j_Q)(g,h^i_R).
\end{align*}

\subsection{Estimate of $\Sigma_{in}$.}
We use the following lemma.
\begin{lemma}
\label{in}
Let $T$ be as before; suppose $\ell(Q)\geqslant \delta^{-r_0}\ell(R)$ and $R\subset Q$. Let $Q_1$ be the son of $Q$ that contains $R$. Then
$$
|(Th^j_Q, h^i_R)|\lesssim \frac{\ell(R)^{\frac{\ep}{2}}}{\ell(Q)^{\frac{\ep}{2}}} \left(\frac{\mu(R)}{\mu(Q_1)}\right)^{\frac{1}{2}}.
$$
\end{lemma}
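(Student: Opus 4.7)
The plan is to split $h_Q^j$ into its constant value $c_{Q_1}:=h_Q^j|_{Q_1}$ on the son containing $R$ and a remainder supported on $Q\setminus Q_1$, to use the goodness of $R$ to separate $R$ from $X\setminus Q_1$, and to exploit the fact that ``$T$'' here actually denotes the paraproduct-subtracted operator $\tilde T = T-\pi-\pi_*$, which satisfies $\tilde T\chi_X \equiv \langle T\chi_X\rangle = \mathrm{const}$. Since $h_Q^j$ is constant on $Q_1$ and $\|h_Q^j\|_{L^2}\lesssim 1$, we have $|c_{Q_1}|\lesssim \mu(Q_1)^{-1/2}$. Writing
\[(Th_Q^j, h_R^i) = c_{Q_1}(T\chi_{Q_1}, h_R^i) + \bigl(T(h_Q^j\chi_{Q\setminus Q_1}), h_R^i\bigr),\]
the geometric input is that the goodness of $R$ applied to the outer cube $Q_1$ (legitimate once $r_0\ge r+1$, since then $\ell(Q_1)=\delta\ell(Q)\ge\delta^{-r}\ell(R)$) yields $\dist(R,X\setminus Q_1)\ge d:=\ell(R)^{\gamma}\ell(Q_1)^{1-\gamma}$.

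For the ``outer'' piece I would use $\int h_R^i\,d\mu=0$ to rewrite the expression, for a fixed $z_R\in R$, as
\[\int_R h_R^i(x)\int_{Q\setminus Q_1}\bigl(K(x,y)-K(z_R,y)\bigr)h_Q^j(y)\,d\mu(y)\,d\mu(x).\]
For $x\in R$ and $y\in Q\setminus Q_1$ one has $|x-y|\ge d\gg \ell(R)\ge |x-z_R|$, so the $\ep$-smoothness of $K$ in the first variable applies. A standard dyadic-annular decomposition centered at $z_R$, together with $\mu(B(z_R,t))\lesssim\lambda(z_R,t)$ and the doubling of $\lambda$, bounds the inner integral by $\lesssim \|h_Q^j\|_\infty(\ell(R)/d)^{\ep}\lesssim \mu(Q)^{-1/2}(\ell(R)/d)^{\ep}$. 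Combined with $\|h_R^i\|_{L^1}\le\sqrt{\mu(R)}$ and $\mu(Q)\ge\mu(Q_1)$, this piece is bounded by $\sqrt{\mu(R)/\mu(Q_1)}\,(\ell(R)/d)^{\ep}$.

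The main obstacle is the ``inner'' piece $c_{Q_1}(T\chi_{Q_1}, h_R^i)$: a naive $L^2$ bound loses all the decay because $\chi_{Q_1}$ contains points $y$ arbitrarily close to $x\in R$. The idea is to write $\chi_{Q_1}=\chi_X-\chi_{X\setminus Q_1}$ and use the modified structure. Since $\tilde T\chi_X$ is a constant and $\int h_R^i\,d\mu=0$, the $\chi_X$-contribution vanishes. For the $\chi_{X\setminus Q_1}$-contribution I would verify by Haar orthogonality, using that $\chi_{X\setminus Q_1}$ vanishes identically on $R$ and on every descendant of $R$, that $(\pi\chi_{X\setminus Q_1},h_R^i)=0$ and $(\pi_*\chi_{X\setminus Q_1},h_R^i)=0$. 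Hence $(\tilde T\chi_{X\setminus Q_1},h_R^i) = (T\chi_{X\setminus Q_1},h_R^i)$, and now the whole integration region $R\times(X\setminus Q_1)$ sits in the separated region $|x-y|\ge d$. Applying the mean-zero trick $\int h_R^i=0$ and repeating the annular argument as above gives $|(T\chi_{X\setminus Q_1},h_R^i)|\lesssim \sqrt{\mu(R)}(\ell(R)/d)^{\ep}$, so multiplying by $|c_{Q_1}|\lesssim \mu(Q_1)^{-1/2}$ produces the same $\sqrt{\mu(R)/\mu(Q_1)}(\ell(R)/d)^{\ep}$ bound.

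Adding the two contributions and observing that $(\ell(R)/d)^{\ep} = (\ell(R)/\ell(Q_1))^{(1-\gamma)\ep}$, the choice $\gamma = \ep/(2(\ep+\log_2 C))\le 1/2$ gives $(1-\gamma)\ep\ge\ep/2$, and together with $\ell(Q_1)=\delta\ell(Q)$ this yields $(\ell(R)/d)^{\ep}\lesssim(\ell(R)/\ell(Q))^{\ep/2}$ (with constant depending on $\delta$), completing the proof.
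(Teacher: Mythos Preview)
The paper does not actually supply a proof of this lemma: it is stated and used, with the surrounding text (``all further estimates are true for $T$ (i.e., for a CZO minus paraproducts), see, for example, \cite{HM} or \cite{HPTV}'') deferring the justification to the literature. Your argument is correct and is precisely the standard one found in those references: split $h_Q^j=c_{Q_1}\chi_{Q_1}+h_Q^j\chi_{Q\setminus Q_1}$, treat the off-diagonal piece by the Calder\'on--Zygmund smoothness and the separation $\dist(R,X\setminus Q_1)\ge \ell(R)^\gamma\ell(Q_1)^{1-\gamma}$ coming from the goodness of $R$, and for the diagonal piece write $\chi_{Q_1}=\chi_X-\chi_{X\setminus Q_1}$, kill the $\chi_X$ term via $\tilde T\chi_X=\langle T\chi_X\rangle\perp h_R^i$, and check that the paraproduct corrections vanish on the $\chi_{X\setminus Q_1}$ term so that one is back to the genuine CZ kernel on a separated region. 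Your verification that $(\pi\chi_{X\setminus Q_1},h_R^i)=0$ and $(\pi_*\chi_{X\setminus Q_1},h_R^i)=0$ is exactly the ``easy argument'' the paper alludes to in the analogous remark following Lemma~\ref{out}. The only implicit assumption you make is $r_0\ge r+1$ so that goodness applies to $Q_1$; this is harmless since $r_0$ is at our disposal.
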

We notice that $\mu(Q_1)\asymp \mu(Q)$.

%
%

We write
$$
\Sigma_{in} = \sli_{n\geqslant r_0}\sli_{\ell(Q)=\delta^{-n}\ell(R), R\;\mbox{is good}, R\subset Q}(Th^j_{Q}, h^i_R)(f, h^j_Q)(g,h^i_R),
$$
\begin{multline}
|\Sigma_{in} |\leqslant  \sli_{n\geqslant r_0}\sli_{\substack{\ell(Q)=\delta^{-n}\ell(R), \\R\;\mbox{is good}, \\R\subset Q}}|(Th^j_{Q}, h^i_R)||(f, h^j_Q)||(g,h^i_R)|\leqslant \\ \leqslant C\sli_{n\geqslant r_0}\sli_{\substack{\ell(Q)=\delta^{-n}\ell(R),\\ R\;\mbox{is good}, \\R\subset Q}}\frac{\ell(R)^{\frac{\ep}{2}}}{\ell(Q)^{\frac{\ep}{2}}} \left(\frac{\mu(R)}{\mu(Q)}\right)^{\frac{1}{2}}|(f, h^j_Q)||(g,h^i_R)| =\\=
C\sli_{n\geqslant r_0}\delta^{\frac{n\ep}{2}}\sli_{\substack{\ell(Q)=\delta^{-n}\ell(R),\\ R\;\mbox{is good}, \\R\subset Q}} \left(\frac{\mu(R)}{\mu(Q)}\right)^{\frac{1}{2}}|(f, h^j_Q)||(g,h^i_R)|.
\end{multline}
We {\bf fix} functions $f$ and $g$ and define $S_n$ as an operator with the following quadratic form:
$$
(S_n u,v) = \sli_{\substack{\ell(Q)=\delta^{-n}\ell(R), \\R\;\mbox{is good}, \\R\subset Q}} \pm \left(\frac{\mu(R)}{\mu(Q)}\right)^{\frac{1}{2}}(u, h^j_Q)(v, h^i_R),
$$
where $\pm$ is chosen so $|(f, h^j_Q)||(g, h^i_R)| = \pm (f, h^j_Q)(g, h^i_R)$.
Then clearly $S_n$ is a dyadic shift of complexity $n$, and so, see Section \ref{shiftest},

$$
|(S_n f, g)|\leqslant Cn^a [w]_2\|f\|_w \|g\|_{w^{-1}}.
$$
Therefore,
$$
|\Sigma_{in}|\leqslant \sli_{n}C n^a \delta^{\frac{n\ep}{2}}[w]_2\|f\|_w \|g\|_{w^{-1}} \leqslant C[w]_2\|f\|_w \|g\|_{w^{-1}}.
$$
\subsection{Estimates for $\Sigma_{out}$}
We use the following lemma from ~\cite{HM}.
\begin{lemma}\label{out}
Let $T$ be as before, $\ell(R)\leqslant \ell(Q)$ and $R\cap Q=\emptyset$. Then the following holds
$$
|(Th^j_Q, h^i_R)| \lesssim \frac{\ell(Q)^{\frac{\ep}{2}}\ell(R)^{\frac{\ep}{2}}}{D(Q, R)^{\ep} \sup_{z\in R}\lambda(z, D(Q,R))} \mu(Q)^{\frac{1}{2}}\mu(R)^{\frac{1}{2}},
$$
where $D(Q, R) = \ell(Q)+\ell(R)+\textup{dist}(Q, R)$.
\end{lemma}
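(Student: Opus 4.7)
The approach is standard Calder\'on--Zygmund: exploit the vanishing $\mu$-integral of $h_R^i$ on the smaller cube together with the H\"older regularity of $K$. Since $R \cap Q = \emptyset$, one writes
\begin{equation*}
(Th_Q^j, h_R^i) = \int_R \int_Q \bigl[K(x,y) - K(x_R, y)\bigr] h_Q^j(y) h_R^i(x) \, d\mu(x)\, d\mu(y),
\end{equation*}
where $x_R \in R$ is any fixed reference point. The goodness of $R$ applied with $Q_1 = Q$ forces $\dist(R, Q) \ge \ell(R)^\gamma \ell(Q)^{1-\gamma}$ (the alternative $\dist(R, X\setminus Q)$ large is excluded because $R \subset X\setminus Q$); in particular $\dist(R,Q) \gtrsim \diam(R)$, so the H\"older condition on the first argument of $K$ applies. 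Combined with $\|h_Q^j\|_\infty \lesssim \mu(Q)^{-1/2}$, $\|h_R^i\|_\infty \lesssim \mu(R)^{-1/2}$, $|x_R y| \ge \dist(R,Q)$, and the monotonicity of $\lambda$ in its second variable, one obtains the preliminary estimate
\begin{equation*}
|(Th_Q^j, h_R^i)| \lesssim \frac{\ell(R)^\ep \sqrt{\mu(Q)\mu(R)}}{\dist(R,Q)^\ep\, \lambda(x_R, \dist(R,Q))}.
\end{equation*}

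To match the stated target it suffices to verify
\begin{equation*}
\frac{\ell(R)^{\ep/2}}{\ell(Q)^{\ep/2}} \cdot \frac{D(Q,R)^\ep}{\dist(R,Q)^\ep} \cdot \frac{\lambda(x_R, D(Q,R))}{\lambda(x_R, \dist(R,Q))} \lesssim 1,
\end{equation*}
and I would split according to the ratio $\dist(Q,R)/\ell(Q)$. When $\dist(Q,R) \ge \ell(Q)$ one has $D(Q,R) \asymp \dist(Q,R)$, the two $\lambda$-factors are comparable, and only $\ell(R)^{\ep/2}/\ell(Q)^{\ep/2} \le 1$ remains. When $\dist(Q,R) < \ell(Q)$, $D(Q,R) \asymp \ell(Q)$, and the doubling constant $C$ of $\lambda$ gives $\lambda(x_R, \ell(Q))/\lambda(x_R, \dist(R,Q)) \le (\ell(Q)/\dist(R,Q))^{\log_2 C}$. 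Substituting $\dist(R,Q) \ge \ell(R)^\gamma \ell(Q)^{1-\gamma}$ and collecting exponents, the product collapses to
\begin{equation*}
\bigl(\ell(R)/\ell(Q)\bigr)^{\ep/2 - \gamma(\ep + \log_2 C)}.
\end{equation*}

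The main obstacle is the precise calibration of $\gamma$: the above exponent is nonnegative exactly when $\gamma \le \ep/(2(\ep + \log_2 C))$, which is the explicit value of $\gamma$ fixed in the previous section. With that choice and $\ell(R) \le \ell(Q)$, the product is at most one, completing the proof of the lemma.
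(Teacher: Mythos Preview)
The paper does not supply its own proof of this lemma; it is quoted from \cite{HM}, and the remark immediately following the statement calls the estimate ``standard'' for Calder\'on--Zygmund operators, citing also \cite{NTV} and \cite{NTV2}. Your argument is exactly the standard one, so at the level of strategy there is nothing to compare against.

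Two remarks are nevertheless in order. First, you invoke goodness of $R$ to obtain the separation $\dist(R,Q)\ge\ell(R)^{\gamma}\ell(Q)^{1-\gamma}$, although goodness is not listed among the hypotheses of the lemma as printed. You are in fact right that some such separation is needed: without it the inequality can fail (take the Hilbert transform on $\R$, $Q=[0,1)$, and $R$ a dyadic interval of length $\ell(R)\ll 1$ immediately to the left of $0$; then the left-hand side is $\sim\ell(R)^{1/2}$ while the right-hand side is $\sim\ell(R)$). Since the paper only ever applies the lemma inside $\Sigma_{out}$, where $R$ is good, the omission in the statement is harmless in context, but your write-up should say explicitly that you are adding this hypothesis rather than deriving it.

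Second, and this is a genuine gap in the proposal, goodness yields the separation only when $\ell(Q)\ge\delta^{-r}\ell(R)$. For the boundedly many scales $\ell(R)\le\ell(Q)<\delta^{-r}\ell(R)$ the goodness condition on $R$ relative to $Q$ is vacuous, $\dist(R,Q)$ may well be smaller than $C\,\ell(R)$, and your H\"older argument does not apply as written. This near-diagonal regime needs a separate short treatment: split $Q$ into the piece within a fixed multiple of $\ell(R)$ from $x_R$ and its complement, use the H\"older bound on the far piece, and on the near piece use the size estimate $|K(x,y)|\le C/\lambda(x,|xy|)$ together with $\mu(B(x,r))\le C\lambda(x,r)$ and an annular decomposition. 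This is routine (and is what the cited references do), but it is absent from your proposal.
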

\begin{zamech}
We should clarify one thing here. If $T$ was a Calderon-Zygmund operator, this estimate would be standard, see \cite{NTV}, \cite{NTV2} or, for metric spaces, \cite{HM}. We, however, subtracted from $T$ two operators: paraproduct and adjoint to paraproduct. However, an easy argument (see \cite{HPTV}) shows that if $R\cap Q=\emptyset$, then $(Th_Q^j, h_R^i)=(\tilde{T}h_Q^j, h_Q^i)$ (for the definition of $\tilde{T}$ see Lemma \ref{parapr} and thereon).
\end{zamech}
Suppose now that $D(Q,R)\sim \delta^{-s}\ell(Q)$. We ask the question: what is the probability
$$
\mathbb{P}(R\subset Q^{(s+s_0+10)} | Q,R\in D_{\omega}),
$$
where $s_0$ is a sufficiently big number.
We use the Lemma \ref{sloy}. Suppose that $R\cap Q^{(s+s_0+10)}=\emptyset$. Suppose also $R=R_x$ (so $x$ is the ``center'' of $R$). Then
\begin{multline}
dist(x, Q^{(s+s_0+10)})\leqslant dist(x, Q)\leqslant dist(Q,R) \leqslant C\delta^{-s}\ell(Q) = C\delta^{-s}\delta^{s+s_0+10}\ell(Q^{(s+s_0+10)}) =\\= C\delta^{s_0 + 10} \ell(Q^{(s+s_0+10)}).
\end{multline}
So $x\in \delta_{Q^{(s+s_0+10)}}(\delta^{s_0+10}))$, and the probability of this is estimated by $\delta^{\eta(s_0+10)}<\frac{1}{2}$ for sufficiently big $s_0$ (we remind that $\eta = \log_{\delta}(1-a))$. Therefore,
$$
\mathbb{P}(R\subset Q^{(s+s_0+10)} | Q,R\in D_{\omega})\geqslant \frac{1}{2}.
$$
So
\begin{multline}
|\Sigma_{out}| \leqslant 2\E\sli_{t,s}\sli_{\substack{\ell(Q)=\delta^{-t}\ell(R), \\D(Q,R)\sim \delta^{-s}\ell(Q),\\R\cap Q=\emptyset}}|(Th^j_{Q}, h^i_R)||(f, h^j_Q)||(g,h^i_R)|\mathbf{1}_{R \,\mbox{is good}}\mathbf{1}_{R\subset Q^{(s+s_0+10)}}\leqslant \\
\leqslant 2\E\sli_{t,s}\sli_{\substack{\ell(Q)=\delta^{-t}\ell(R), \\D(Q,R)\sim \delta^{-s}\ell(Q),\\R\cap Q=\emptyset \\ R,Q\subset Q^{s+s_0+10}}}\frac{\ell(Q)^{\frac{\ep}{2}}\ell(R)^{\frac{\ep}{2}}}{D(Q, R)^{\ep} \sup_{z\in R}\lambda(z, D(Q,R))} \mu(Q)^{\frac{1}{2}}\mu(R)^{\frac{1}{2}}|(f, h^j_Q)||(g,h^i_R)|\mathbf{1}_{R \,\mbox{is good}} \leqslant \\ \leqslant
2\E\sli_{t,s}\sli_{\substack{\ell(Q)=\delta^{-t}\ell(R), \\D(Q,R)\sim \delta^{-s}\ell(Q),\\R\cap Q=\emptyset,\\ R, Q\subset Q^{s+s_0+10}}}\delta^{\frac{t\ep}{2}}  \left(\frac{\ell(Q)}{D(Q, R)}\right)^{\ep} \frac{\mu(Q)^{\frac{1}{2}}\mu(R)^{\frac{1}{2}}}{\sup_{z\in R}\lambda(z, D(Q,R))}|(f, h^j_Q)||(g,h^i_R)|\mathbf{1}_{R \,\mbox{is good}} \leqslant \\ \leqslant
C2\E\sli_{t,s}\delta^{\frac{t\ep}{2}} \delta^{s\ep}\sli_{\substack{\ell(Q)=\delta^{-t}\ell(R), \\D(Q,R)\sim \delta^{-s}\ell(Q),\\R\cap Q=\emptyset,\\ R, Q\subset Q^{s+s_0+10}}}  \frac{\mu(Q)^{\frac{1}{2}}\mu(R)^{\frac{1}{2}}}{\sup_{z\in R}\lambda(z, D(Q,R))}|(f, h^j_Q)||(g,h^i_R)|\mathbf{1}_{R \,\mbox{is good}}.
\end{multline}
We now define $S_n$ as we did before:
$$
(S_n u, v)=\sli_{\substack{\ell(Q)=\delta^{-t}\ell(R), \\D(Q,R)\sim \delta^{-s}\ell(Q),\\R\cap Q=\emptyset,\\ R, Q\subset Q^{s+s_0+10}}}  \pm\frac{\mu(Q)^{\frac{1}{2}}\mu(R)^{\frac{1}{2}}}{\sup_{z\in R}\lambda(z, D(Q,R))}(u, h^j_Q)(v,h^i_R)\mathbf{1}_{R \,\mbox{is good}}.
$$
We need to estimate the coefficient. We write
\begin{multline}
\lambda(z, D(Q,R))\sim \lambda(z, \delta^{-s}\ell(Q)) \sim \lambda(z, \delta^{-s-s_0-10}\ell(Q)) \sim \\ \sim \lambda(z, \ell(Q^{(s+s_0+10)})) \sim \lambda(z, diam(Q^{(s+s_0+10)}))  \geqslant \mu(B(z, diam(Q^{(s+s_0+20)}))) \geqslant \\ \geqslant \mu(Q^{(s+s_0+10)}),
\end{multline}
and therefore
$$
|\pm\frac{\mu(Q)^{\frac{1}{2}}\mu(R)^{\frac{1}{2}}}{\sup_{z\in R}\lambda(z, D(Q,R))}| \leqslant C \frac{\mu(Q)^{\frac{1}{2}}\mu(R)^{\frac{1}{2}}}{\mu(Q^{s+s_0+10})}.
$$
We notice that $C$ does not depend on $s$ since we used the doubling property of $\lambda$ only for transmission from $\delta^{-s}\ell(Q)$ to $\delta^{-s-s_0-10}\ell(Q)$.

We conclude that $S_n$ is a dyadic shift of complexity at most $C(s+t)$. Therefore, see Section \ref{shiftest},
$$
|\Sigma_{out}|\leqslant 2C\E\sli_{t,s}\delta^{\frac{t\ep}{2}} \delta^{s\ep} (s+t)^a [w]_2\|f\|_w \|g\|_{w^{-1}} \leqslant C[w]_2\|f\|_w \|g\|_{w^{-1}},
$$
and our proof is completed.

\section{Paraproducts and Bellman function}
\label{para}
Now we will prove the Lemma \ref{parapr}.

We remind that the quadratic form of our paraproduct $\pi$ is the following:
$$
(\pi(f),g):=\sli_R\sli_i \langle f\rangle_{\mu, R}  (T\chi_X, h^i_R) (g, h^i_R)\,.
$$

Operator $T$ is bounded in $L^2(\mu)$ and $\mu$ is doubling. Therefore, it is well known that
coefficients $b_R:=b^i_R:=(T\chi_X, h^i_R)$ satisfy Carleson condition for any of our lattices of Christ's dyadic cubes:

\begin{equation}
\label{CarlQR}
\forall  Q\in \mathcal{D} \,\, \sli_{R\in \mathcal{D},\, R\subset Q} |b_R|^2  \le B\, \mu(Q)\,.
\end{equation}
The best constant $B$ here is called the Carleson constant and it is denoted by $\|b\|_{C}$. It is known that for our $b_R:=(T\chi_X, h^i_R)$ Carleson constant is bounded by $B_T:= C\, \|T\|_{L^2(\mu)\rightarrow L^2(\mu)}$.

If we would be on the line with Lebesgue measure $\mu$ and $w$ would be a usual weight in $A_2$, then the sum would follow the estimate of O. Beznosova \cite{Bez}:
\begin{equation}
\label{bez}
|\pi_{T\chi_X}(f,g)| \le C\, \sqrt{B_T}\|w\|_{A_2}\,.
\end{equation}

But the same is true in our situation. To prove that, one should analyze the proof in \cite{Bez} and see that it used always conditions on $w$ and $b$ separately. They were always split by Cauchy--Schwarz inequality.  The only inequality, where $w$ and $b$ meet was of the type: let $Q$ be a Christ's cube of a certain lattice, then

\begin{equation}
\label{ainfty}
\sli_{R\subset Q, \,R\in\mathcal{D}} \langle w\rangle_{\mu, R} b_R^2 \le [w]_{A_\infty} \|b\|_{C} \int_Q w\,d\mu\, ,
\end{equation}
where
$$
[w]_{A_\infty} = \sup \frac{1}{\mu(B)}\ili_{B} wd\mu \cdot \exp\left(-\frac{1}{\mu(B)}\ili_{B}wd\mu\right).
$$

Let us explain the last inequality. We write
$$
\langle w \rangle_{\mu, R} \leqslant [w]_{A_\infty} \cdot \exp\left(\langle w \rangle_{\mu, R}\right) = [w]_{A_\infty} \cdot \exp\left(2\langle w^{\frac{1}{2}} \rangle_{\mu, R}\right) \leqslant [w]_{A_\infty} \langle w^{\frac{1}{2}}\rangle_{\mu, R}^2 \leqslant [w]_{A_\infty} \inf_{x\in R} M(w^{\frac{1}{2}}\chi_R)^2.
$$

Finally, we notice that $\{b_R^2\}$ is a Carleson sequence, and finish our explanation with the following well known theorem.
\begin{theorem}
Suppose $\{\alpha_K\}$ is a Carleson sequence. Then for any positive function $F$ the following inequality holds:
$$
\sum_{K}\alpha_K \inf_K F(x) \leqslant \int F(x)d\mu(x).
$$
\end{theorem}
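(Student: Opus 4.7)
The plan is to combine a layer-cake decomposition of $\inf_K F$ with the standard reduction of the Carleson condition from single dyadic cubes to arbitrary (dyadic) open sets. Concretely, I will write
\begin{equation*}
\sum_K \alpha_K \inf_K F \;=\; \int_0^\infty \Bigl(\sum_{K \,:\, \inf_K F > \lambda} \alpha_K\Bigr)\, d\lambda
\end{equation*}
using Fubini/tonelli on a nonnegative summand, then bound the inner sum by $\mu(\{F>\lambda\})$, and integrate back to obtain $\int F\,d\mu$.

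For the key inner bound, set $E_\lambda := \{x : F(x) > \lambda\}$. If $\inf_K F > \lambda$ then every $x \in K$ satisfies $F(x) \geq \inf_K F > \lambda$, so $K \subseteq E_\lambda$. Thus
\begin{equation*}
\sum_{K\,:\,\inf_K F > \lambda}\alpha_K \;\leq\; \sum_{K \in \mathcal{D},\, K \subseteq E_\lambda} \alpha_K.
\end{equation*}
Next I would collect the maximal cubes $\{K_j\} \subset \mathcal{D}$ with $K_j \subseteq E_\lambda$. By the standard nesting of Christ-type dyadic cubes these $K_j$'s are pairwise disjoint, and any $K \in \mathcal{D}$ with $K \subseteq E_\lambda$ sits inside exactly one such $K_j$. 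Applying the (normalized) Carleson condition $\sum_{K \subseteq Q} \alpha_K \leq \mu(Q)$ to each $K_j$ yields
\begin{equation*}
\sum_{K \subseteq E_\lambda}\alpha_K \;=\; \sum_j \sum_{K \subseteq K_j} \alpha_K \;\leq\; \sum_j \mu(K_j) \;\leq\; \mu(E_\lambda).
\end{equation*}

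Putting these together and invoking the layer-cake representation $\int F\,d\mu = \int_0^\infty \mu(E_\lambda)\,d\lambda$ gives the claim
\begin{equation*}
\sum_K \alpha_K \inf_K F \;\leq\; \int_0^\infty \mu(E_\lambda)\,d\lambda \;=\; \int F\,d\mu.
\end{equation*}

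There is essentially no obstacle of substance. The only point requiring a brief comment is the existence of the maximal cubes $K_j$ inside $E_\lambda$: for $F$ merely measurable and $\lambda$ arbitrary, the set $E_\lambda$ may be complicated, but since $\mathcal{D}$ is countable we can simply take, for each $\lambda$, all $K \in \mathcal{D}$ with $K \subseteq E_\lambda$ and retain the ones not strictly contained in another such cube; this is well defined and yields the disjoint covering used above. If one prefers to avoid any measurability subtlety, apply the argument first to $F \wedge N$ and then let $N \to \infty$ by monotone convergence, which is legitimate since all terms are nonnegative.
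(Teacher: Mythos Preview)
Your proof is correct and is the standard layer-cake argument for this Carleson embedding lemma. Note, however, that the paper does not actually supply a proof of this statement: it is invoked as a ``well known theorem'' (and later restated as a ``simple folklore Lemma'' with constant $2B$) without any argument given. So there is no proof in the paper to compare against; your write-up simply fills in what the authors chose to omit, and the route you take---Fubini on the level sets $\{F>\lambda\}$, selection of maximal dyadic cubes inside each level set, application of the Carleson bound on each, and summation---is exactly the canonical one. In the paper's compact setting (finite dyadic tree with a single top cube) the existence of maximal cubes is immediate, so even your cautionary remark about measurability and truncation is more than is needed here.
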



In all other estimates in \cite{Bez} the sums with $\Delta_Q w$ (see the  definition before Lemma 3.2 of \cite{NV}) and the sums with $b$ are always estimated separately. The sums where the terms contain the product of $\Delta_Q w$ and $b_Q$ never got estimated by Bellman technique: they got split first. Then \eqref{bez} follows in our metric situation as well.

\section{Weighted estimates for dyadic shifts via Bellman function}
\label{shiftest}

This section is here just for the sake of completeness. In fact, it just repeats the article of Nazarov--Volberg \cite{NV}. In this section we prove the following theorem.
\begin{theorem}
Let $\mathbb{S}_{m,n}$ be a dyadic shift of complexity $(n,m)$. Then
$$
\|\mathbb{S}_{m,n}\|_{wd\mu}\leqslant C (m+n+1)^a [w]_{2, \mu}.
$$
\end{theorem}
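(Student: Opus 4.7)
The plan is to reduce the weighted $L^2$ estimate for $\sha_{m,n}$ to a Bellman-function estimate of Carleson type, essentially transcribing the argument of \cite{NV} to the Christ-cube setting. By duality it suffices to prove
\begin{equation*}
|(\sha_{m,n}f, g)_\mu| \lesssim (m+n+1)^a [w]_{2,\mu} \|f\|_{L^2(w\,d\mu)}\|g\|_{L^2(w^{-1}d\mu)}.
\end{equation*}
Substituting $\phi := f w^{1/2}$ and $\psi := g w^{-1/2}$ turns both weighted norms into $\|\phi\|_{L^2(\mu)}$ and $\|\psi\|_{L^2(\mu)}$. The Haar coefficients $(f, h_I)_\mu$ and $(g, h_J)_\mu$ can then be written as sums of products of averages of $\phi,\psi,w^{1/2},w^{-1/2}$ over $I$, $J$ and their children, so the bilinear form of $\sha_{m,n}$ is reorganized into structured pieces indexed by the triples $(L,I,J)$ with $I, J \subset L$ at depths $m, n$.

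Following \cite{NV}, one constructs a Bellman function $B = B(x,y,u,v)$ on the convex domain
\begin{equation*}
\Omega_Q := \{(x,y,u,v)\in\mathbb{R}^4 : x^2\le u,\; y^2\le v,\; 1\le uv\le Q\}, \qquad Q := [w]_{2,\mu},
\end{equation*}
with $0 \le B \le CQ$ and a sufficiently concave Hessian. Evaluating $B$ at $B_L := B(\ave{\phi}_L, \ave{\psi}_L, \ave{w}_L, \ave{w^{-1}}_L)$, the concavity together with the $M$-fold partition of a Christ cube $L$ into its children gives a per-cube Jensen-defect inequality
\begin{equation*}
\mu(L)\Bigl[B_L - \sum_{L' \in \mathrm{ch}(L)}\tfrac{\mu(L')}{\mu(L)}B_{L'}\Bigr] \;\gtrsim\; \Lambda_L,
\end{equation*}
where $\Lambda_L$ is the contribution to $(\sha_{m,n}f,g)_\mu$ of the triples $(L,I,J)$ rooted at $L$. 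Summing over $L$ in the lattice, the left hand side telescopes to $\mu(X)B_X \le CQ\mu(X)$, and the right hand side becomes the full bilinear form modulo the complexity count.

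The essential additional difficulty beyond the $(m,n)=(0,0)$ case is the complexity factor $(m+n+1)^a$. In $\sha_{m,n}$ the cubes $I, J$ sit at depths $m$ and $n$ below $L$, so the Jensen defect at $L$ must be chained through $m + n$ generations of descendants to reach the averages that drive the Haar coefficients. A careful bookkeeping, using Cauchy--Schwarz at each descent and the fact that averages of $\phi, \psi, w^{\pm 1/2}$ over a child differ from those over the parent in a controlled way, yields the polynomial loss $(m+n+1)^a$ rather than an exponential one; this is the delicate technical heart of the argument and is where the bulk of \cite{NV} goes.

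The main obstacle to merely quoting \cite{NV} is checking that every step survives the passage from Euclidean dyadic cubes to Christ cubes in a GDMS. The required ingredients are (i) the nested structure of Christ's children constructed in Section \ref{randlatt}, (ii) a uniform bound $M = M(A)$ on the number of children per cube coming from geometric doubling, (iii) $\mu(L') \asymp \mu(L)$ for every child $L'$, which follows from the doubling of $\mu$ together with the fact that Christ cubes are sandwiched between concentric balls of comparable radii, and (iv) the Haar functions from Section \ref{decomp}, which satisfy the orthogonality, mean-zero, and $\|h_Q\|_\infty \le C\mu(Q)^{-1/2}$ bounds used in \cite{NV}. With these in place the Bellman-plus-telescoping argument of \cite{NV} applies essentially verbatim, producing the claimed bound $\|\sha_{m,n}\|_{L^2(w\,d\mu)} \le C(m+n+1)^a [w]_{2,\mu}$.
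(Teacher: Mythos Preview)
Your outline does not match the argument of \cite{NV} that the paper reproduces, and the places where you defer to \cite{NV} are precisely the places where your sketch diverges from it. The paper does \emph{not} use a four-variable Bellman function evaluated at $(\langle\phi\rangle_L,\langle\psi\rangle_L,\langle w\rangle_L,\langle w^{-1}\rangle_L)$. Instead it first decomposes each unweighted Haar function as $h_I=\alpha_I h_I^{w}+\beta_I\chi_I$ with $|\alpha_I|\le\langle w\rangle_I^{1/2}$ and $|\beta_I|\lesssim(\Delta_I w/\langle w\rangle_I)\mu(I)^{-1/2}$, and the bilinear form $(\sha_{m,n}\phi w,\psi\sigma)$ splits into four sums $I,II,III,IV$. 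The only Bellman function in the proof is the \emph{two}-variable $B_Q(x,y)=(xy)^{\alpha}$ on $\{1<xy\le Q\}$, used solely to show that
\[
\tau_I:=\langle w\rangle_I^{\alpha}\langle\sigma\rangle_I^{\alpha}\Bigl(\tfrac{|\Delta_I w|^2}{\langle w\rangle_I^2}+\tfrac{|\Delta_I\sigma|^2}{\langle\sigma\rangle_I^2}\Bigr)\mu(I)
\]
is a Carleson sequence with constant $\lesssim Q^{\alpha}$. The polynomial complexity factor does not come from ``chaining the Jensen defect through $m+n$ generations'' but from a stopping-time argument: inside each $L$ one stops at maximal $K$ where either $|\Delta_K w|/\langle w\rangle_K\ge 1/(m{+}n{+}1)$ (and similarly for $\sigma$) or $g(K)=g(L)+m$, and then passes to $L^p$ with $p=2-\tfrac1{m+n+1}$ and to the maximal function $M_w$. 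That is where the powers of $(m{+}n{+}1)$ actually arise.

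Separately, your own setup has an inconsistency: with $\phi=fw^{1/2}$ and $u=\langle w\rangle_L$ there is no reason for the domain constraint $x^2\le u$ to hold, since Cauchy--Schwarz gives $\langle fw^{1/2}\rangle_L^2\le\langle f^2 w\rangle_L$, not $\langle w\rangle_L$; and the assertion that $(f,h_I)_\mu$ becomes ``sums of products of averages of $\phi,\psi,w^{1/2},w^{-1/2}$'' is not correct, because $(f,h_I)_\mu=(\phi w^{-1/2},h_I)_\mu$ involves the pointwise product. So the sketch, as written, neither reproduces \cite{NV} nor stands on its own; the missing idea is the weighted-Haar decomposition together with the stopping-time/maximal-function machinery that produces the polynomial dependence on complexity.
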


\begin{zamech}
We notice that the best known $a$ is equal to one. It can be gotten using the technique from \cite{HLM+} or from \cite{Tr}. However, for the application we made in the previous sections, namely, the linear $A_2$ bound for an arbitrary Calder\'on--Zygmund operator on geometrically doubling metric space, the actual value of $a$ is not important.
\end{zamech}

We now give formal definitions. Let $h_Q^i$ be Haar functions as before, normalized in $L^2$. We also denote by $g(Q)$ the generation of a ``dyadic cube'' $Q$. Then by $\mathbb{S}_{m,n}$ we denote an operator
$$
f\rightarrow \sum_{L\in \cD} \int_L a_L(x,y)f(y)dy\,,
$$
where
$$
a_L(x,y) =\sum_{\stackrel{ I\subset L, J\subset L}{g(I)= g(L)+m, \, g(J)= g(L)+n}}c_{L,I,J} h_J^j(x)h_I^i(y)\,.
$$

We denote $\sigma=w^{-1}$. We begin with the following  lemma.
\begin{lemma}
\label{decomp}
$$
h_I^j = \al_I ^jh_I^{w,j} + \beta_I^j \chi_I\,,
$$
where

1)  $|\alpha_I^j| \le \sqrt{\langle w\rangle_{\mu, I}}$,

2)$ |\beta_I^j| \le \frac{(h_I^{ w,j}, w)_{\mu}}{w(I)}$, where $w(I):= \int_I w\,d\mu$,

3) $\{h_I^{w,j}\}_{I} $ is supported on $I$, orthogonal to constants in $L^2(w\,d\mu)$,

4) $h_I^{w,j}$ assumes on each son $s(I)$ a constant value,

5) $\|h_I^{w,j}\|_{L^2(w\,d\mu)}=1$.
\end{lemma}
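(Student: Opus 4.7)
The plan is to obtain the decomposition by a one‑step Gram--Schmidt: take the function $h_I^j$, subtract its $w\,d\mu$‑average over $I$, and then normalize in $L^2(w\,d\mu)$. Concretely, set
\[
\beta_I^j := \frac{(h_I^j, w)_\mu}{w(I)}, \qquad \tilde h_I^j := h_I^j - \beta_I^j \chi_I.
\]
By the choice of $\beta_I^j$, we have $\int \tilde h_I^j\, w\,d\mu = (h_I^j,w)_\mu - \beta_I^j\, w(I) = 0$, i.e.\ $\tilde h_I^j$ is orthogonal to constants in $L^2(w\,d\mu)$. Since $h_I^j$ and $\chi_I$ are both supported on $I$ and constant on every son of $I$, the same is true for $\tilde h_I^j$. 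Defining $\alpha_I^j := \|\tilde h_I^j\|_{L^2(w\,d\mu)}$ and $h_I^{w,j} := (\alpha_I^j)^{-1} \tilde h_I^j$ (and taking $h_I^{w,j}=0$ in the degenerate case $\tilde h_I^j\equiv 0$) then gives the asserted decomposition $h_I^j = \alpha_I^j h_I^{w,j} + \beta_I^j \chi_I$ together with properties (3), (4), (5) for free.

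For the norm bounds, the key point is that the splitting $h_I^j = \tilde h_I^j + \beta_I^j \chi_I$ is \emph{orthogonal} in $L^2(w\,d\mu)$, because $\tilde h_I^j$ is built to be orthogonal to constants under $w\,d\mu$. Pythagoras therefore yields
\[
(\alpha_I^j)^2 + (\beta_I^j)^2 w(I) \;=\; \|h_I^j\|_{L^2(w\,d\mu)}^2.
\]
Using property (iv) of the Haar functions, $\|h_I^j\|_\infty \le C/\sqrt{\mu(I)}$, I estimate
\[
\|h_I^j\|_{L^2(w\,d\mu)}^2 \;\le\; \frac{C^2}{\mu(I)} \int_I w\, d\mu \;=\; C^2\,\langle w\rangle_{\mu,I},
\]
which gives $|\alpha_I^j| \le C\sqrt{\langle w\rangle_{\mu,I}}$, i.e.\ the bound in (1) up to the universal constant of (iv). The bound (2) for $|\beta_I^j|$ is immediate from the defining formula $\beta_I^j = (h_I^j,w)_\mu/w(I)$ (reading the right‑hand side of the stated inequality with $h_I^j$ in place of $h_I^{w,j}$, since the latter is orthogonal to $w$ by (3) and the formula as printed must be a typo).

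I do not expect any real obstacle: the lemma amounts to a Gram--Schmidt step inside the finite‑dimensional space of functions supported on $I$ and constant on each son, comparing the two inner products $(\cdot,\cdot)_\mu$ and $(\cdot,\cdot)_{w\,d\mu}$. The only subtlety worth flagging is that $\alpha_I^j>0$ whenever $h_I^j\not\equiv 0$: if $\tilde h_I^j$ vanished, then $h_I^j$ would be a constant multiple of $\chi_I$ on $I$, and orthogonality of $h_I^j$ to constants in $L^2(\mu)$ would force that multiple to be zero. Hence $h_I^{w,j}$ is well defined and the $L^2(w\,d\mu)$‑normalization in (5) makes sense.
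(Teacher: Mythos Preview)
The paper states this lemma without proof, treating it as a standard fact; your Gram--Schmidt construction is exactly the intended argument and is carried out correctly. You are also right that property~(2) as printed must read $(h_I^j,w)_\mu$ rather than $(h_I^{w,j},w)_\mu$, since the latter vanishes by~(3); with this reading your definition of $\beta_I^j$ gives~(2) as an equality, and the paper's subsequent estimate~\eqref{delta} and rewriting~(2') go through for $h_I^j$ just as you indicate.
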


\bigskip

\noindent{\bf Definition.}
Let
$$
\Delta_I w:= \sum_{\text{sons of}\,\,I} |\langle w\rangle_{\mu, s(I)}-\langle w\rangle_{\mu, I}|\,.
$$

It is a easy to see that the doubling property of measure $\mu$ implies
\begin{equation}
\label{delta}
|(h_I^{ w,j}, w)_{\mu}|\le C\, (\Delta_I w)\, \mu(I)^{1/2}\,.
\end{equation}

Therefore, the property 2) above can be rewritten as

\medskip

2') $|\beta_I^j|\le C\,\frac{|\Delta_I w|}{\langle w\rangle_{\mu, I}} \frac{1}{\mu (I)^{1/2}}$.

\vspace{.2in}

Fix $\phi\in L^2(w\,d\mu), \psi\in L^2(\sigma)$. We need to prove
\begin{equation}
\label{main}
|(\sha_{m,n}\phi w,\psi\sigma)|\le C\, (n+m+1)^a\|\phi\|_{w}\|\psi\|_{\sigma}\,.
\end{equation}

We  estimate $(\sha_{m,n}\phi w, \psi \sigma)$ as

$$
|\sum_L\sum_{ I, J} c_{L,I,J}(\phi w, h_I)_{\mu}(\psi \sigma, h_J)_{\mu}|\le
$$
$$
\sum_L\sum_{ I,J} |c_{L,I,J} (\phi w, h^w_I)_{\mu}\sqrt{\langle w \rangle_{\mu, I}}(\psi \sigma,h^{\sigma}_J)_{\mu}|\sqrt{\langle \sigma \rangle_{\mu, J}}|\,+
$$
$$
\sum_L\sum_{I,J} |c_{L,I,J} \langle \phi w\rangle_{\mu, I}\frac{\Delta_I w}{\langle w \rangle_{\mu, I}}(\psi \sigma,h^{\sigma}_J)_{\mu}\sqrt{\langle \sigma \rangle_{\mu, J}}\sqrt{I}|\,+
$$
$$
\sum_L\sum_{I,J} |c_{L,I,J} \langle \psi \sigma\rangle_{\mu, J}\frac{\Delta_J\sigma}{\langle \sigma \rangle_{\mu, J}}(\phi w,h^{w}_I)_{\mu}\sqrt{\langle w \rangle_{\mu, I}}\sqrt{J}|\,+
$$
$$
\sum_L\sum_{I,J}|c_{L,I,J} \langle \phi w\rangle_{\mu, I}\langle \psi \sigma\rangle_{\mu, J} \frac{\Delta_I w}{\langle w \rangle_{\mu, I}} \frac{\Delta_J\sigma}{\langle \sigma \rangle_{\mu, J}}\sqrt{I}\sqrt{J}|=: I + II +III +IV\,.
$$

We can notice  that  because $|c_{L,I,J}|\le \frac{\sqrt{\mu(I)}\sqrt{\mu(J)}}{\mu(L)}$  each sum inside $L$ can be estimated  by a perfect product of $S$ and $R$ terms, where
$$
R_L(\phi w):= \sum_{I\subset L...} \langle \phi w\rangle_{\mu, I}  \frac{|\Delta_I w|}{\langle w \rangle_{\mu, I}}\frac{\mu(I)}{\sqrt{\mu(L)}}
$$
$$
S_L(\phi w) := \sum_{I\subset L...} (\phi w, h^w_I)_{\mu}\sqrt{\langle w \rangle_{\mu, I}}\frac{\sqrt{\mu(I)}}{\sqrt{\mu(L)}}
$$
and the corresponding terms for $\psi \sigma$.
So we have
$$
I \le \sum_L S_L(\phi w)S_L(\psi\sigma)
,\,
II \le \sum_L S_L(\phi w)R_L(\psi\sigma),\,\,
\,
$$
$$
III\le \sum_L R_L(\phi w)S_L(\psi\sigma)
,\,\,\,
IV\le \sum_L R_L(\phi w)R_L(\psi\sigma)\,.
$$
Now
\begin{equation}
\label{sl}
S_L(\phi w) \le \sqrt{\sum_{I\subset L...} |(\phi w, h^w_I)_{\mu}|^2}\sqrt{\La w\Ra_{\mu, L}}\,,\,\,\,S_L(\psi \sigma) \le \sqrt{\sum_{J\subset L...} |(\psi \sigma, h^\sigma_J)|^2}\sqrt{\La \sigma\Ra_{\mu, L}}
\end{equation}
Therefore,
\begin{equation}
\label{I}
I\le C\QQ^{1/2} \|\phi\|_w\|\psi\|_{\sigma}\,.
\end{equation}

Terms $II, III$ are symmetric, so consider $III$.
Using Bellman function $(xy)^{\al}$ one can prove now
\begin{lemma}
\label{uval}
The sequence
$$
\tau_I := \La w\Ra_{\mu, I}^{\al}\La \sigma\Ra_{\mu, I}^{\al}\bigg(\frac{|\Delta_I w|^2}{\La w \Ra_{\mu, I}^2} + \frac{|\Delta_I \sigma|^2}{\La \sigma \Ra_{\mu, I}^2}\bigg) \mu(I)
$$
form a Carleson measure with Carleson constant  at most $c_{\al}Q^{\al}$, where $Q:=[w]_{A_2}$ for any $\al\in (0, 1/2)$.
\end{lemma}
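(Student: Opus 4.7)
The plan is to use the classical concave Bellman function $B(x,y) = (xy)^{\alpha}$ defined on the domain $\Omega_Q := \{(x,y) : x,y>0,\ xy \leq Q\}$. On this domain $0 \leq B \leq Q^{\alpha}$, and the Hessian
$$H_B = \alpha(xy)^{\alpha}\begin{pmatrix} (\alpha-1)/x^2 & \alpha/(xy) \\ \alpha/(xy) & (\alpha-1)/y^2 \end{pmatrix}$$
has determinant $\alpha^2(1-2\alpha)(xy)^{2\alpha-2}/(xy)^{-2} > 0$ and negative trace for $\alpha \in (0,1/2)$, so $B$ is strictly concave with a quantitative lower bound on $-H_B$.

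The key step I would carry out is a \emph{finite-difference quantitative concavity} inequality: for every $(\bar x,\bar y)\in \Omega_Q$ and every convex combination $(\bar x,\bar y) = \sum_k \lambda_k (x_k,y_k)$ inside $\Omega_Q$,
$$B(\bar x,\bar y) - \sum_k \lambda_k B(x_k, y_k) \geq c_\alpha (\bar x \bar y)^{\alpha} \sum_k \lambda_k \left(\frac{(x_k-\bar x)^2}{\bar x^2} + \frac{(y_k-\bar y)^2}{\bar y^2}\right),$$
with $c_\alpha \sim \alpha(1-2\alpha)$. Infinitesimally this is just the Hessian calculation above. For non-infinitesimal jumps I would use Taylor's formula with integral remainder, exploiting that $(xy)^{\alpha}$ has Hessian whose magnitude scales correctly to give a uniform lower bound along the segments from $(\bar x, \bar y)$ to each $(x_k, y_k)$ inside $\Omega_Q$; this is the main obstacle, since the Hessian is not uniformly bounded below globally, only along directions where $xy$ does not blow up.

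Next, apply the inequality at each cube $I$ with $(x_k,y_k) = (\langle w\rangle_{\mu,I_k},\langle \sigma\rangle_{\mu,I_k})$ ranging over sons of $I$, and $\lambda_k = \mu(I_k)/\mu(I)$. Multiplying by $\mu(I)$,
$$\mu(I)B_I - \sum_k \mu(I_k) B_{I_k} \geq c_\alpha (\langle w\rangle_{\mu,I}\langle\sigma\rangle_{\mu,I})^{\alpha} \sum_k \mu(I_k) \left(\frac{(\langle w\rangle_{\mu,I_k}-\langle w\rangle_{\mu,I})^2}{\langle w\rangle_{\mu,I}^2} + \frac{(\langle\sigma\rangle_{\mu,I_k}-\langle\sigma\rangle_{\mu,I})^2}{\langle\sigma\rangle_{\mu,I}^2}\right).$$
Since the number $M$ of sons is bounded by the geometric doubling constant, Cauchy--Schwarz gives $|\Delta_I w|^2 \leq M \sum_k (\langle w\rangle_{\mu,I_k}-\langle w\rangle_{\mu,I})^2$; the doubling of $\mu$ yields $\mu(I_k) \geq c\mu(I)/M$, so the right side above dominates $c_\alpha' \tau_I/M^2$ after comparing $\sum_k\lambda_k(\cdot)^2$ to $\sum_k(\cdot)^2$.

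Finally, I would telescope this inequality over all dyadic descendants of any fixed cube $J$: the sum on the left collapses to $\mu(J) B_J - \lim_{N\to\infty}\sum_{I\subset J,\,g(I)=N} \mu(I) B_I$. Since $B\geq 0$ on $\Omega_Q$, dropping the boundary term gives
$$\sum_{I\subset J} \tau_I \leq \frac{M^2}{c_\alpha'}\,\mu(J) B_J \leq \frac{M^2}{c_\alpha'}\,Q^{\alpha}\mu(J),$$
which is exactly the claimed Carleson bound with constant $\leq c_\alpha Q^{\alpha}$. As noted, the delicate point is only the finite-difference concavity step; everything else is bookkeeping on the tree together with the doubling of $\mu$ and the geometric doubling bound on the number of sons.
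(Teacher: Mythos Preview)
Your approach is essentially identical to the paper's: the same Bellman function $B(x,y)=(xy)^{\alpha}$, the same Hessian computation, Taylor's formula with integral remainder along the segments from the parent point $a=(\langle w\rangle_{\mu,I},\langle\sigma\rangle_{\mu,I})$ to the children points $b_i$, and the same telescoping over the tree.

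The one place where you leave a loose end is exactly the ``main obstacle'' you flag. The paper resolves it with a clean trick you should know: since $-q_i''(t)\ge 0$ for all $t\in[0,1]$ by concavity, one may discard the integral over $[1/2,1]$ and keep only $t\in[0,1/2]$. On that half of the segment the coordinates satisfy $x\ge \tfrac12\langle w\rangle_{\mu,I}$ and $y\ge \tfrac12\langle\sigma\rangle_{\mu,I}$ trivially (convex combination), while the upper bounds $x\le C\langle w\rangle_{\mu,I}$, $y\le C\langle\sigma\rangle_{\mu,I}$ along the \emph{whole} segment follow from the doubling of $\mu$ alone (since $\langle w\rangle_{\mu,s_i(I)}\le \mu(I)/\mu(s_i(I))\cdot\langle w\rangle_{\mu,I}$). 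Hence on $[0,1/2]$ one has $(xy)^{\alpha}/x^2\asymp (\bar x\bar y)^{\alpha}/\bar x^2$ with constants depending only on $\alpha$ and the doubling of $\mu$, not on $[w]_{A_2}$. Your phrasing ``only along directions where $xy$ does not blow up'' slightly misidentifies the issue: $xy\le Q$ is automatic, and the real danger is a coordinate becoming much smaller than $\bar x$ or $\bar y$ near the far endpoint, which the half-segment trick avoids. With that fix in place your outline is complete and matches the paper.
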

\begin{proof}
We need a very simple

\medskip

\noindent{\bf Sublemma}.
Let $Q> 1, 0<\alpha<\frac12$. In domain  $\Omega_Q:=\{(x,y): X>o, y>0, 1<xy\le Q$ function $B_Q(x,y):=x^{\al}y^{\al}$ satisfies the following estimate of its Hessian matrix  (of its second differential form, actually)
$$
-d^2 B_Q(x,y)\ge \al(1-2\al)x^{\al}y^{\al}\bigg(\frac{(dx)^2}{x^2} +\frac{(dy)^2}{y^2}\bigg)\,.
$$
The form $-d^2 B_Q(x,y)\ge 0$ everywhere in $x>0, y>0$. Also obviously $0\le B_Q(x,y) \le Q^{\al}$ in $\Omega_Q$.
\begin{proof}
Direct calculation.
\end{proof}

\medskip

Fix now a Christ's cube $I$ and let $s_i(I), i=1,...,M$, be all its sons. Let $a=(\La w\Ra_{\mu, I}, \La \sigma\Ra_{\mu, I})$, $b_i=(\La w\Ra_{\mu,s_i( I)}, \La \sigma\Ra_{\mu, s_i(I)})$, $i=1,\dots, M$, be points--obviously--in $\Omega_Q$, where $Q$ temporarily means $[w]_{A_2}$. Consider $c_i(t)=a(1-t)+ b_it, 0\le t \le 1$ and $q_i(t):= B_Q(c_i(t))$.  We want to use Taylor's formula
\begin{equation}
\label{Lagr}
q_i(0)-q_i(1) = -q'_i(0) - \int_0^1dx\int_0^x q_i''(t)\,dt\,.
\end{equation}
Notice two things: Sublemma  shows that $-q_i''(t) \ge 0$ always. Moreover, it shows that if $t\in [0,1/2]$,  then we have the following qualitative estimate holds
\begin{equation}
\label{wI}
-q_i''(t) \ge c\,( \La w\Ra_{\mu, I} \La \sigma\Ra_{\mu, I})^{\al}\bigg(\frac{(\La w\Ra_{\mu,s_i( I)}-\La w\Ra_{\mu, I})^2}{\La w\Ra_{\mu, I}^2} +\frac{(\La \sigma\Ra_{\mu,s_i( I)}-\La \sigma\Ra_{\mu, I})^2}{\La \sigma\Ra_{\mu, I}^2} \bigg)
\end{equation}
This requires a small explanation. If we are on the segment $[a, b_i]$, then the first coordinate of such a point cannot be larger than $C\, \La w\Ra_{\mu, I}$, where $C$ depends only on doubling of $\mu$ (not $w$). This is obvious. The same is true for the second coordinate with the obvious change of $w$ to $\sigma$. But there is no such type of estimate from below on this segment:  the first coordinate cannot be smaller than $k\, \La w\Ra_{\mu, I}$, but $k$ may (and will) depend on the doubling of $w$ (so ultimately on its $[w]_{A_2}$ norm. In fact, at the ``right" endpoint of $[a, b_i]$. The first coordinate is $\La w\Ra_{\mu, s_i(I)}\le \int_I\,w\,d\mu/ \mu(s_i(I)) \le C\,  \int_I\,w\,d\mu/ \mu(I))=C\, \La w\Ra_{\mu, I}$, with $C$ only depending on the doubling of $\mu$. But the estimate from below will involve the doubling of $w$, which we must avoid. But if $t\in [0,1/2]$, and we are on the ``left half" of interval $[a, b_i]$ then obviously the first coordinate is $\ge \frac12 \La w\Ra_{\mu, I}$ and the second coordinate is $\ge \frac12 \La \sigma\Ra_{\mu, I}$.

We do not need to integrate $-q_i''(t)$ for all $t\in [0,1]$ in \eqref{Lagr}. We can only use integration over $[0,1/2]$  noticing that $-q_i''(t)\ge 0$ otherwise. Then the chain rule
$$
q_i''(t)=(B_Q(c_i(t))''=(d^2B_Q(c_i(t) (b_i-a), b_i-a)
$$
immediately gives us \eqref{wI} with constant $c$ depending on the doubling of $\mu$ but {\it independent} of the doubling of $w$.

\medskip

Next step is to add all \eqref{Lagr}, with convex coefficients $\frac{\mu(s_i(I))}{\mu(I)}$, and to notice that $\sum_{i=1}^M\frac{\mu(s_i(I))}{\mu(I)} q_i'(0) =\nabla B_{Q}(a) \sum_{i=1}^M\cdot (a-b_i)\frac{\mu(s_i(I))}{\mu(I)}=0$, because by definition
$$
a= \sum_{i=1}^M \frac{\mu(s_i(I))}{\mu(I)}\,b_i\,.
$$
Notice that the addition of all \eqref{Lagr}, with convex coefficients $\frac{\mu(s_i(I))}{\mu(I)}$ gives us now ( we take into account \eqref{wI} and positivity of $-q_i''(t)$)
$$
B_Q(a)- \sum_{i=1}^M \frac{\mu(s_i(I))}{\mu(I)}\,B_Q(b_i) \ge c\,c_1\,( \La w\Ra_{\mu, I} \La \sigma\Ra_{\mu, I})^{\al}\sum_{i=1}^M\bigg(\frac{(\La w\Ra_{\mu,s_i( I)}-\La w\Ra_{\mu, I})^2}{\La w\Ra_{\mu, I}^2} +\frac{(\La \sigma\Ra_{\mu,s_i( I)}-\La \sigma\Ra_{\mu, I})^2}{\La \sigma\Ra_{\mu, I}^2} \bigg)\,.
$$
We used here the doubling of $\mu$ again, by noticing that $\frac{\mu(s_i(I))}{\mu(I)}\ge c_1$ (recall that $s_i(I)$ and $I$ are almost balls of comparable radii).
We rewrite the previous inequality using our definition of $\Delta_I w, \Delta_I\sigma$ listed above as follows
$$
\mu(I)\,B_Q(a)- \sum_{i=1}^M \mu(s_i(I))\,B_Q(b_i) \ge c\,c_1\,( \La w\Ra_{\mu, I} \La \sigma\Ra_{\mu, I})^{\al}\bigg(\frac{(\Delta_I w)^2}{\La w\Ra_{\mu, I}^2} +\frac{(\Delta_I\sigma)^2}{\La \sigma\Ra_{\mu, I}^2} \bigg)\mu(I)\,.
$$
Notice that $B_Q(a)=\La w\Ra_{\mu, I}\La\sigma\Ra_{\mu,I}$. Now we iterate the above inequality and get for any of Christ's dyadic $I$'s:
$$
\sum_{J\subset I\,, J\in\cD} ( \La w\Ra_{\mu, J} \La \sigma\Ra_{\mu, J})^{\al}\bigg(\frac{(\Delta_J w)^2}{\La w\Ra_{\mu, J}^2} +\frac{(\Delta_J\sigma)^2}{\La \sigma\Ra_{\mu, J}^2} \bigg)\mu(J) \le C\, Q^{\al}\mu(I)\,.
$$
This is exactly the Carleson property of the measure $\{\tau_I\}$ indicated in our Lemma \ref{uval}, with Carleson constant $C\,Q^{\al}$. The proof showed that $C$ depended only on $\al\in (0, 1/2)$ and on the doubling constant of measure $\mu$.

\end{proof}

Now, using this lemma, we start to estimate our $S_L$'s and $R_L$'s. For $S_L(\psi\sigma)$ we already had  estimate \eqref{sl}.

To estimate $R_L(\phi w)$ let us denote by $\cP_L$  maximal stopping intervals $K\in \cD, K\subset L$, where the stopping criteria are 1) either $\frac{|\Delta_K w|}{\La w\Ra_{\mu, K}} \ge \frac{1}{m+n+1}$, or  $\frac{|\Delta_K \sigma|}{\La \sigma\Ra_{\mu, K}} \ge \frac{1}{m+n+1}$, or
2) $g(K)= g(L)+m$.

\begin{lemma}
\label{sbor}
If $K$ is any stopping interval then
\begin{equation}
\label{K}
\sum_{I\subset K, \ell(I)=2^{-m} \ell(L) } |\La \phi  w\Ra_{\mu, I} | \frac{|\Delta_I w|}{\La w\Ra_{\mu, I}}\frac{\mu(I)}{\sqrt{\mu(L)}} \le 2e^{\al} (m+n+1) \La |\phi | w\Ra_{\mu, K} \frac{\sqrt{\mu(K)}}{\sqrt{\mu(L)}}\sqrt{\tau_K}\La w\Ra_{\mu, L}^{-\al/2} \La \sigma\Ra_{\mu, L}^{-\al/2}\,.
\end{equation}
\end{lemma}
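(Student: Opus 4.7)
The strategy is to split according to which of the two stopping criteria triggers at $K$: either the generation cap $g(K)=g(L)+m$, or one of the $\Delta$-threshold conditions. A key preliminary observation, valid in both cases, is that every strict ancestor $I'$ of $K$ inside $L$ is non-stopping, so $|\Delta_{I'}w|/\La w\Ra_{\mu,I'}<1/(m+n+1)$ and similarly for $\sigma$. Since a child's average differs from its parent's by at most $\Delta_{\text{parent}}$, telescoping over the at most $m$ generations from $L$ down to $K$ yields
$$e^{-1}\La w\Ra_{\mu,L}\le\La w\Ra_{\mu,K}\le e\La w\Ra_{\mu,L},$$
and the same for $\sigma$. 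Hence $\La w\Ra_{\mu,K}^{-\al/2}\La\sigma\Ra_{\mu,K}^{-\al/2}\le e^{\al}\La w\Ra_{\mu,L}^{-\al/2}\La\sigma\Ra_{\mu,L}^{-\al/2}$, so I may freely replace the $L$-averages on the RHS by $K$-averages at the cost of a factor $e^{\al}$.

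In the easy case (generation stopping), the sum collapses to the single term $I=K$. Bounding $|\La\phi w\Ra_{\mu,K}|\le\La|\phi|w\Ra_{\mu,K}$ and writing
$$\frac{|\Delta_Kw|}{\La w\Ra_{\mu,K}}\sqrt{\mu(K)}=\sqrt{\frac{(\Delta_Kw)^2}{\La w\Ra_{\mu,K}^2}\mu(K)}\le\sqrt{\frac{\tau_K}{\La w\Ra_{\mu,K}^{\al}\La\sigma\Ra_{\mu,K}^{\al}}}$$
(immediate from the definition of $\tau_K$), and applying the comparison of averages above, delivers the claim with constant $e^{\al}$, well inside the $2e^{\al}(m+n+1)$ budget. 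In the harder case (a $\Delta$-threshold triggers at $K$, say $|\Delta_Kw|/\La w\Ra_{\mu,K}\ge 1/(m+n+1)$), that very threshold forces
$$\tau_K\ge\La w\Ra_{\mu,K}^{\al}\La\sigma\Ra_{\mu,K}^{\al}\mu(K)/(m+n+1)^2,\quad\text{i.e.}\quad\sqrt{\mu(K)}\le(m+n+1)\sqrt{\tau_K}\,\La w\Ra_{\mu,K}^{-\al/2}\La\sigma\Ra_{\mu,K}^{-\al/2}.$$
On the LHS, I use the crude bound $|\Delta_Iw|/\La w\Ra_{\mu,I}\le B_\mu$ (each son's average is at most a doubling-constant multiple of the parent's, and there are boundedly many sons) together with the disjointness of the cubes $I$ at a common generation to collapse $\sum_I\La|\phi|w\Ra_{\mu,I}\mu(I)\le\La|\phi|w\Ra_{\mu,K}\mu(K)$. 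Splitting $\mu(K)=\sqrt{\mu(K)}\cdot\sqrt{\mu(K)}$ and substituting the bound for one of the factors, followed by the comparison of averages, delivers the estimate.

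The main obstacle I expect is cosmetic bookkeeping rather than new ideas: tracking the doubling-dependent constant $B_\mu$ and the several factors of $e^{\al/2}$ through the argument, and verifying they pack into the clean $2e^{\al}(m+n+1)$ on the RHS. The conceptual content is just the dichotomy in the stopping rule: if a $\Delta$-threshold triggers, I buy $\sqrt{\mu(K)}$ out of $\sqrt{\tau_K}$ at the price of one factor of $(m+n+1)$; if only the generation cap applies, the single-term sum is handled without any such price.
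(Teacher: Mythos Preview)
Your proposal is correct and follows essentially the same argument as the paper: the same dichotomy on the stopping criterion, the same crude bound on $|\Delta_I w|/\La w\Ra_{\mu,I}$ followed by collapsing the sum and inserting the factor $(m+n+1)$ via the threshold in the first case, the same single-term reduction in the second case, and the same telescoping comparison of $\La w\Ra_{\mu,K},\La\sigma\Ra_{\mu,K}$ with $\La w\Ra_{\mu,L},\La\sigma\Ra_{\mu,L}$ through the non-stopping ancestors. Your remark that the constant in front is really a doubling-dependent $B_\mu$ rather than a literal $2$ is well taken; the paper writes ``$2$'' but is elsewhere explicit that constants in this section depend on the doubling of $\mu$, so the ``$2e^{\al}$'' in the statement should be read as such a structural constant.
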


\begin{proof}
If we stop by the first criterion, then
\begin{multline*}
\sum_{I\subset K, \ell(I)=2^{-m} \ell(L) } |\La \phi w\Ra_{\mu, I} | \frac{|\Delta_I w|}{\La w\Ra_{\mu, I}}\frac{\mu(I)}{\sqrt{\mu(L)}} \le 2\sum_{I\subset K, \ell(I)=2^{-m} \ell(L) } |\La \phi w\Ra_{\mu, I} | \mu(I) \frac{1}{\mu(K)}\frac{\mu(K)}{\sqrt{\mu(L)}}\le \\ \le 2\,\La | \phi | w\Ra_{\mu, K}  \frac{\mu(K)}{\sqrt{\mu(L)}}
\le 2(m+n+1) \La |\phi | w\Ra_{\mu, K}\bigg( \frac{|\Delta_K w|}{\La w\Ra_{\mu, K}} + \frac{|\Delta_K \sigma|}{\La \sigma\Ra_{\mu, K}} \bigg)\frac{\mu(K)}{\sqrt{\mu(L)}}\le\\ \le  2(m+n+1) \La |\phi | w\Ra_{\mu, K} \frac{\sqrt{\mu(K)}}{\sqrt{\mu(L)}}\sqrt{\tau_K}\La w\Ra_{\mu, K}^{-\al/2} \La \sigma\Ra_{\mu, K}^{-\al/2}\,.
\end{multline*}
Now  replacing $\La w\Ra_{\mu, K}^{-\al/2} \La \sigma\Ra_{\mu, K}^{-\al/2}$ by $\La w\Ra_{\mu, L}^{-\al/2} \La \sigma\Ra_{\mu, L}^{-\al/2}$ does not grow the estimate by more than $e^{\al}$ as all pairs of son/father intervals  larger than $K$ and smaller than $L$ will have there averages compared by constant at most $1\pm \frac1{m+n+1}$. And there are at most $m$ such intervals between $K$ and $L$.

If we stop by the second criterion, then $K$ is one of $I$'s, $g(I)=g(L)+m$, and
$$
\ |\La \phi w\Ra_{\mu, I} |\frac{|\Delta_I w|}{\La w\Ra_{\mu, I}}\frac{\mu(I)}{\sqrt{\mu(L)}} \le | \La \phi w\Ra_{\mu, K} | \frac{\mu(K)}{\sqrt{\mu(L)}}\frac{|\Delta_K w|}{\La w\Ra_{\mu, K}}\le  \La |\phi | w\Ra_{\mu, K}  \frac{\sqrt{\mu(K)}}{\sqrt{\mu(L)}}\sqrt{\tau_K}\La w\Ra_{\mu, K}^{-\al/2} \La \sigma\Ra_{\mu, K}^{-\al/2}\,.
$$
Now  we replace $\La w\Ra_{\mu, K}^{-\al/2} \La \sigma\Ra_{\mu, K}^{-\al/2}$ by $\La w\Ra_{\mu, L}^{-\al/2} \La \sigma\Ra_{\mu, L}^{-\al/2}$  as before.

\end{proof}

Now
$$
R_L(\phi w) \le C(m+n+1) \La w\Ra_{\mu, L}^{-\al/2} \La \sigma\Ra_{\mu, L}^{-\al/2}\sum_{K\, \in \cP_L}  \La |\phi | w\Ra_{\mu, K} \frac{\sqrt{\mu(K)}}{\sqrt{\mu(L)}}\sqrt{\tau_K}
$$
$$
\le  C(m+n+1) \La w\Ra_{\mu, L}^{-\al/2} \La \sigma\Ra_{\mu, L}^{-\al/2} \bigg(\sum_{K\,  \in \cP_L}  \La |\phi | w\Ra_{\mu, K}^2 \frac{{\mu(K)}}{\mu(L)}\bigg)^{1/2} (\wt\tau_L)^{1/2}\,,
$$
where
$$
\wt\tau_L =\sum_{K\,  \in \cP_L} \tau_K\,.
$$
Notice that the sequence $\{\wt\tau_L\}_{L\in \mathcal{D}}$ form a Carleson sequence  (measure) with constant at most $C(m+1) Q^{\al}$.

Now we make a trick! We will estimate the right hand side  as
$$
R_L(\phi w) \le C(m+n+1)\La w\Ra_{\mu, L}^{-\al/2} \La \sigma\Ra_{\mu, L}^{-\al/2} \bigg(\sum_{K\,  \in \cP_L}  \La |\phi |w\Ra_{\mu, K}^p \frac{{\mu(K)}}{\mu(L)}\bigg)^{1/p} (\wt\tau_L)^{1/2}\,,
$$
where $p=2-\frac1{m+n+1}$. In fact,
$$
\bigg(\sum_{K\subset L, \,K\, is \,\, maximal}  \La | \phi |w\Ra_{\mu, K}^2 \frac{{\mu(K)}}{\mu(L)}\bigg)^{p/2}\le \sum_{K\,  \in \cP_L}  \La |\phi |w\Ra_{\mu, K}^p\bigg(\frac{{\mu(K)}}{\mu(L)}\bigg)^{p/2}\,.
$$
But if if $0\le j\le m$, then $(C^{-j})^{-\frac1{m+n+1}}\le C$, and therefore in the formula above $\bigg(\frac{{\mu(K)}}{\mu(L)}\bigg)^{1-\frac1{2(m+n+1)}} \le C\, \frac{{\mu(K)}}{\mu(L)}$, and $C$ depends only on the doubling constant of $\mu$. So the trick is justified.
Therefore, using Cauchy inequality, one gets
$$
R_L(\phi w) \le C(m+n+1)\La w\Ra_{\mu, L}^{-\al/2} \La \sigma\Ra_{\mu, L}^{-\al/2} \bigg(\sum_{K\,  \in \cP_L}  \La |\phi |^p w\Ra_{\mu, K}\La w\Ra_{\mu, K}^{p-1} \frac{{\mu(K)}}{\mu(L)}\bigg)^{1/p} (\wt\tau_L)^{1/2}\,.
$$
We can replace all $ \La w\Ra_{\mu, K}^{p-1}$ by $\La w\Ra_{\mu, L}^{p-1}$ paying the price by constant. This is again because all intervals larger than $K$ and smaller than $L$ will have there averages compared by constant at most $1\pm \frac1{m+n+1}$. And there are at most $m$ such intervals between $K$ and $L$. Finally,
\begin{equation}
\label{RL}
R_L(\phi w) \le C(m+n+1)\La w\Ra_{\mu, L}^{-\al/2} \La \sigma\Ra_{\mu, L}^{-\al/2} \bigg(\sum_{K\,  \in \cP_L}  \La |\phi |^p w\Ra_{\mu, K}\frac{{\mu(K)}}{\mu(L)}\bigg)^{1/p}\La w\Ra_{\mu, L}^{1-\frac1p}  (\wt\tau_L)^{1/2}
\end{equation}

We need the standard notations: if $\nu$ is an arbitrary positive measure we denote
$$
M_{\nu}f(x):=\sup_{r>0}\frac1{\nu(B(x,r))}\int_{B(x,r)} |f(x)|\,d\nu(x)\,.
$$
In particular $M_w$ will stand for this maximal function with $d\nu =w(x)\,d\mu$.

\bigskip

From \eqref{RL} we get
\begin{equation}
\label{RL1}
R_L(\phi w) \le C(m+n+1)\La w\Ra_{\mu, L}^{1-\al/2} \La \sigma\Ra_{\mu, L}^{-\al/2} \inf_L\, M_{w}(|\phi |^p )^{1/p} (\wt\tau_L)^{1/2}
\end{equation}

Now
\begin{equation}
\label{SR}
S_L(\psi\sigma)R_L(\phi w) \le C(m+n+1)\La w\Ra_{\mu, L}^{1-\al/2} \La \sigma\Ra_{\mu, L}^{1-\al/2}\frac{\inf_L \,M_{w}(|\phi |^p )^{1/p}}{\La \sigma\Ra_{\mu, L}^{1/2}} (\wt\tau_L)^{1/2}\sqrt{\sum_{J\subset L...} |(\psi \sigma, h^\sigma_J)|^2}\,,
\end{equation}
\begin{equation}
\label{RR}
R_L(\psi\sigma)R_L(\phi w) \le C(m+n+1)\La w\Ra_{\mu, L}^{1-\al} \La \sigma\Ra_{\mu, L}^{1-\al}\inf_L \,\,M_{w}(|\phi |^p )^{1/p}\inf_L\, M_{\sigma}(|\psi |^p )^{1/p} \wt\tau_L\,.
\end{equation}

Now we use the Carleson property of $\{\wt\tau_L\}_{L\in\cD}$. We need a simple folklore Lemma.

\begin{lemma}
\label{carl1}
Let $\{\al_L\}_{L\in\cD}$ define Carleson measure with intensity $B$ related to dyadic lattice $\cD$ on metric space $X$.  Let $F$ be a positive function on $X$. Then
\begin{equation}
\label{1}
\sum_L (\inf_L F)\, \al_L \le 2 B\int_{X} F\,d\mu\,.
\end{equation}
\begin{equation}
\label{2}
\sum_L\frac{ \inf_L F}{\La\sigma\Ra_{\mu, L}} \al_L \le C\,B\int_{X}\frac{F}{\sigma} d\mu\,.
\end{equation}
\end{lemma}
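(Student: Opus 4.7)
The plan is to prove (1) by a standard layer-cake / level-set decomposition, and to derive (2) via a similar but more delicate stopping-time argument.

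For (1), write $\inf_L F = \int_0^\infty \chi_{\{\inf_L F > t\}}\,dt$ and exchange orders of summation and integration:
\[
\sum_L \alpha_L \inf_L F = \int_0^\infty \Bigl( \sum_{L\,:\,\inf_L F > t} \alpha_L \Bigr)\, dt.
\]
For fixed $t$, every $L$ in the inner sum is contained in $\{F > t\}$ (since $F(x) \ge \inf_L F > t$ on $L$). Group these cubes by their dyadically maximal ancestors $L^*$, which are pairwise disjoint up to $\mu$-null sets in Christ's lattice; the $\mu$-Carleson hypothesis $\sum_{L \subset L^*}\alpha_L \le B\mu(L^*)$ then gives $\sum_{L\,:\,\inf_L F > t}\alpha_L \le B\mu(\{F > t\})$. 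Integrating in $t$ yields $\sum_L \alpha_L \inf_L F \le B\int F\,d\mu$; the factor $2$ in the statement absorbs any boundary overhead from the measure-theoretic subtleties of Christ's construction.

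For (2), the starting pointwise bound is Jensen's inequality $1/\La\sigma\Ra_{\mu,L} \le \La\sigma^{-1}\Ra_{\mu,L}$, combined with $\inf_L F \cdot \int_L \sigma^{-1}\,d\mu \le \int_L F\sigma^{-1}\,d\mu$ (which follows from $\inf_L F \le F(x)$ pointwise). These give
\[
\frac{\inf_L F}{\La\sigma\Ra_{\mu,L}} \le \inf_L F \cdot \La\sigma^{-1}\Ra_{\mu,L}.
\]
To convert the resulting right-hand side into the integral $\int F/\sigma\,d\mu$, organize $\cD$ into stopping coronas anchored at a principal family $\mathcal{F}$ defined by doubling of $\La\sigma^{-1}\Ra$: $F' \in \mathcal{F}$ is declared a child of $F$ exactly when $\La\sigma^{-1}\Ra_{F'} \ge 2\La\sigma^{-1}\Ra_F$. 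This doubling-stopping is automatically $\mu$-sparse: $\sum_{F' \text{ child of }F}\mu(F') \le \mu(F)/2$. Within each corona one has $\La\sigma^{-1}\Ra_L \le 2\La\sigma^{-1}\Ra_F$, so applying (1) inside the corona bounds its contribution by $CB\La\sigma^{-1}\Ra_F \int_F F\,d\mu$; summing over $\mathcal{F}$ and using sparsity-telescoping (the $\mathcal{F}$-ancestors of any fixed $x$ form a chain along which $\La\sigma^{-1}\Ra$ grows geometrically) produces the bound $CB\int F/\sigma\,d\mu$.

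The main obstacle is the telescoping step itself: a naive substitution $\La\sigma^{-1}\Ra_L \le M^{\cD}_\mu(\sigma^{-1})$ followed by (1) only yields a bound involving $\int F\cdot M^{\cD}_\mu(\sigma^{-1})\,d\mu$, which is not in general controlled by $\int F/\sigma\,d\mu$, since the dyadic maximal function is merely weak-$L^1$ bounded on $d\mu$. The sparsity of $\mathcal{F}$ rescues the argument: each point $x$ belongs to only a geometrically-thinning chain of principal cubes, so the per-corona contributions at $x$ assemble into a convergent geometric series bounded pointwise by $C\sigma^{-1}(x)$. This sparse-domination / corona-telescoping is the classical Nazarov--Treil--Volberg ingredient underlying the linear bound in the weighted dyadic Carleson embedding.
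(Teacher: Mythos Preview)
The paper states this lemma as ``simple folklore'' and gives no proof, so there is nothing to compare against. Your argument for \eqref{1} via the layer-cake decomposition is the standard one and is correct.

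Your argument for \eqref{2}, however, has a genuine gap in the final telescoping. With your stopping rule (a child $P'$ of $P$ is maximal with $\langle\sigma^{-1}\rangle_{P'}\ge 2\langle\sigma^{-1}\rangle_P$), the geometric series collapses the sum $\sum_{P\in\mathcal{F},\,P\ni x}\langle\sigma^{-1}\rangle_P$ to at most $2\langle\sigma^{-1}\rangle_{P_0}$, where $P_0$ is the \emph{smallest} principal cube containing $x$; you then assert this is $\le C\sigma^{-1}(x)$. But that pointwise bound is false. Since $x$ lies in $E_{P_0}=P_0\setminus\bigcup P'$, every dyadic $L$ with $x\in L\subset P_0$ fails the stopping criterion, so $\langle\sigma^{-1}\rangle_L<2\langle\sigma^{-1}\rangle_{P_0}$; letting $L\searrow x$ and using Lebesgue differentiation gives only the \emph{reverse} inequality $\sigma^{-1}(x)\le 2\langle\sigma^{-1}\rangle_{P_0}$. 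Concretely, if $\sigma^{-1}$ has a tall spike on a tiny subset of $P_0$ away from $x$, then $\langle\sigma^{-1}\rangle_{P_0}$ can be arbitrarily larger than $\sigma^{-1}(x)$.

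The fix is to drop the Jensen step and run the corona on \emph{halving of $\langle\sigma\rangle$} instead: declare $P'$ a stopping child of $P$ when $\langle\sigma\rangle_{P'}\le\tfrac12\langle\sigma\rangle_P$. In each corona $1/\langle\sigma\rangle_L<2/\langle\sigma\rangle_P$, and after applying \eqref{1} inside the corona you are left with bounding $\int F(x)\sum_{P\ni x}\langle\sigma\rangle_P^{-1}\,d\mu(x)$. The same geometric-series reduction gives $\sum_{P\ni x}\langle\sigma\rangle_P^{-1}\le 2\langle\sigma\rangle_{P_0}^{-1}$, and now the Lebesgue-differentiation argument points the right way: every dyadic $L$ with $x\in L\subset P_0$ has $\langle\sigma\rangle_L>\tfrac12\langle\sigma\rangle_{P_0}$, so $\sigma(x)\ge\tfrac12\langle\sigma\rangle_{P_0}$ for $\mu$-a.e.\ $x\in E_{P_0}$, i.e.\ $\langle\sigma\rangle_{P_0}^{-1}\le 2\sigma^{-1}(x)$, which yields \eqref{2}.
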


Now use \eqref{SR}. Then  the estimate of $III\le \sum_L S_L(\psi\sigma) R_L(\phi w)$ will be reduced to estimating
$$
(m+n+1)Q^{1-\al/2}\bigg(\sum_L \frac{\inf_L M_{w}(|\phi |^p )^{2/p}}{\La \sigma\Ra_{\mu, L}} \wt\tau_L\bigg)^{1/2}\le (m+n+1)^2\,Q\,\bigg(\int_{\R} (M_{w}(|\phi |^p ))^{2/p} wd\mu\bigg)^{1/2}
$$
$$
\le  (\frac1{2-p})^{1/p}(m+n+1)^2\,Q\,\bigg(\int_{\R} \phi^2 \, wd\mu\bigg)^{1/2}\le (m+n+1)^3\,Q\,\bigg(\int_{\R} \phi^2 \, wd\mu\bigg)^{1/2} \,.
$$
Here we used \eqref{2} and the usual estimates of maximal function $M_{\mu}$ in $L^q(\mu)$ when $q\approx 1$. Of course for $II$ we use the symmetric reasoning.

\bigskip

Now $IV$: we use \eqref{RR} first.
$$
\sum_L R_L(\psi\sigma) R_L(\phi w) \le (m+n+1)Q^{1-\al}\sum_L\inf_L\, M_{w}(|\phi |^p )^{1/p}\inf_L\, M_{\sigma}(|\psi |^p )^{1/p}\wt\tau_L
$$
$$
\le C (m+n+1)^2Q\int_{\R}(M_{w}(|\phi |^p ))^{1/p}\,(M_{\sigma}(|\psi |^p ))^{1/p}w^{1/2}\sigma^{1/2}d\mu
$$
$$
\le C (m+n+1)^2Q\,\bigg(\int_{\R}(M_{w}(|\phi |^p ))^{2/p}\, wd\mu\bigg)^{1/2}\,\bigg(\int_{\R}(M_{\sigma}(|\psi |^p ))^{2/p}\, \sigma d\mu\bigg)^{1/2}
$$
$$
\le C (m+n+1)^4\,Q\,\bigg(\int_{\R}\phi^2\, wd\mu\bigg)^{1/2}\bigg(\int_{\R}\psi^2\, \sigma d\mu\bigg)^{1/2}\,.
$$
Here we used \eqref{1} and the usual estimates of maximal function $M_{\mu}$ in $L^{2/p}(\mu)$ when $p\approx 2,\, p<2$.

\end{document}